\documentclass[11pt]{article}
\usepackage[english]{babel} 
\usepackage[utf8]{inputenc} 
\usepackage[T1]{fontenc} 
\usepackage{amsmath}
\usepackage{amsfonts}
\usepackage{amssymb}
\usepackage{amsthm}
\usepackage{comment}
\usepackage{lmodern}
\usepackage{hyperref}
\usepackage{graphicx,subfigure,color}

\numberwithin{equation}{section} 

\newtheorem{theorem}{Theorem}[section]

\newtheorem{proposition}[theorem]{Proposition}
\newtheorem{lemma}[theorem]{Lemma}
\newtheorem{corollary}[theorem]{Corollary}
\newtheorem{definition}[theorem]{Definition}

\def\R{\mathbb{R}}

\def\C{\mathbb{C}}
\def\N{\mathbb{N}}

\author{Virginie Bonnaillie-No\"el, Benedetta Noris, Manon Nys, Susanna Terracini}

\title{On the eigenvalues of Aharonov-Bohm operators with varying poles 
\footnote{B. Noris and S. Terracini are partially supported by the PRIN2009 grant ``Critical Point Theory and Perturbative Methods for Nonlinear Differential Equations''. M. Nys is a Research Fellow of the Belgian Fonds de la Recherche Scientifique - FNRS. V. Bonnaillie-No\"el is supported by the ANR (Agence Nationale de la Recherche), project {\sc Optiform} n$^{\rm o}$ ANR-12-BS01-0007-02.}
}


\begin{document}

\maketitle

\begin{abstract}
We consider a magnetic operator of Aharonov-Bohm type with Dirichlet boundary conditions in a planar domain. We analyse the behavior of its eigenvalues as the singular pole moves in the domain. For any value of the circulation we prove that the $k$-th magnetic eigenvalue converges to the $k$-th eigenvalue of the Laplacian as the pole approaches the boundary. We show that the magnetic eigenvalues depend in a smooth way on the position of the pole, as long as they remain simple. In case of half-integer circulation, we show  that the rate of convergence depends on the number of nodal lines of the corresponding magnetic eigenfunction. In addition, we provide several numerical simulations both on the circular sector and on the square, which find a perfect theoretical justification within our main results, together with the ones in \cite{BH}. \\ 
\break
2010 \emph{AMS Subject Classification.} 35J10, 35J75, 35P20, 35Q40, 35Q60.
\\
\emph{Keywords}. Magnetic Schr\"odinger operators, eigenvalues, nodal domains.
\end{abstract}

\section{Introduction}
Let $\Omega\subset\R^2$ be an open, simply connected, bounded set. For $a=(a_1,a_2)$ varying in $\Omega$, we consider the magnetic Schr\"odinger operator
\[
(i \nabla + A_{a})^{2}u=-\Delta u +i\nabla\cdot A_{a} u +2i A_{a}\cdot \nabla u+|A_{a}|^2u
\]
acting on functions with zero boundary conditions on $\partial \Omega$, where $A_a$ is a magnetic potential of Aharonov-Bohm type, singular at the point $a$. More specifically, the magnetic potential has the form
\begin{equation}\label{eq:magnetic_potential_definition}
A_{a}(x) = \alpha \left( - \frac{x_{2} - a_{2}}{(x_{1} - a_{1})^{2}+(x_{2} - a_{2})^{2}}, \frac{x_{1} - a_{1}}{(x_{1} - a_{1})^{2}+(x_{2} - a_{2})^{2}} \right) + \nabla \chi
\end{equation}
where $x=(x_{1},x_{2})\in\Omega\setminus\{a\}$, $\alpha\in (0,1)$ is a fixed constant and $\chi \in C^\infty(\bar{\Omega})$. Since the regular part $\chi$ does not play a significant role, throughout the paper we will suppose without loss of generality that $\chi\equiv0$.

The magnetic field associated to this potential is a $2\pi\alpha$-multiple of the Dirac delta at $a$, orthogonal to the plane. A quantum particle moving in $\Omega \setminus\{a\}$ will be affected by the magnetic potential, although it remains in a region where the magnetic field is zero (Aharonov-Bohm effect \cite{AB}). We can think at the particle as being affected by the non-trivial topology of the set $\Omega\setminus\{a\}$.

We are interested in studying the behavior of the spectrum of the operator $(i \nabla + A_{a})^{2}$ as $a$ moves in the domain and when it approaches its boundary. By standard spectral theory, the spectrum of such operator consists of a diverging sequence of real positive eigenvalues (see Section \ref{sec:preliminaries}). We will denote by $\lambda_j^a$, $j\in \N\setminus\{0\}$, the eigenvalues counted with their multiplicity (see \eqref{eigenvalueofmagneticoperator}) and by $\varphi_j^a$ the corresponding eigenfunctions, normalized in the $L^2(\Omega)$-norm. 
We shall focus our attention on the extremal and critical points of the maps $a\mapsto \lambda_j^a$.


One motivation for our study is that, in the case of half integer circulation, critical positions of the moving pole can be related to optimal partition problems. The link between spectral minimal partitions and nodal domains of eigenfunctions has been investigated in full detail in (\cite{HH2012,HHT2009,HHT2010dim3,HHT2010}). By the results in \cite{HHT2009}, in two dimensions, the boundary of a minimal partition is the union  of finitely many regular arcs, meeting at some multiple intersection points dividing the angle in an equal fashion.  If  the multiplicity of the clustering domains is even, then the partition is nodal, i.e. it is the nodal set of an eigenfunction. On the other hand,  the results in \cite{BH,BHH,BHV,TN1} suggest that the minimal partitions featuring a clustering point of odd multiplicity  should be related to the nodal domains of eigenfunctions of Aharonov-Bohm Hamiltonians which corresponds to a critical value of the eigenfunction with respect to the moving pole.

%
%
Our first result states the continuity of the magnetic eigenvalues with respect to the position of the singularity, up to the boundary.
\begin{theorem}\label{theorem:continuity}
For every $j\in \N\setminus\{0\}$, the function $a\in\Omega \mapsto \lambda_j^a \in \R$ admits a continuous extension on $\overline\Omega$. More precisely, as $a\to\partial\Omega$, we have that $\lambda_{j}^{a}$ converges to $\lambda_{j}$, the $j$-th eigenvalue of $-\Delta$ in $H^1_0(\Omega)$.
\end{theorem}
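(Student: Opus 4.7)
The plan is to apply the variational min-max characterization of both $\lambda_j^a$ and $\lambda_j$ and establish the two matching inequalities $\limsup_{a\to\partial\Omega}\lambda_j^a\le\lambda_j$ and $\liminf_{a\to\partial\Omega}\lambda_j^a\ge\lambda_j$. The central technical device is a gauge transformation: given $a\in\Omega$ close to $\partial\Omega$, let $\Sigma_a$ be the (short) segment joining $a$ to its nearest boundary point $b_a\in\partial\Omega$. Then $\Omega\setminus\Sigma_a$ is simply connected, the argument $\theta_a(x)=\arg(x-a)$ admits a single-valued smooth branch on it, and one has the pointwise identity $(i\nabla+A_a)(e^{i\alpha\theta_a}v)=e^{i\alpha\theta_a}\,i\nabla v$. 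This reduces the magnetic problem on $\Omega\setminus\{a\}$ to a pure Laplace eigenvalue problem on $\Omega\setminus\Sigma_a$ with multiplicative jump $e^{2\pi i\alpha}$ across $\Sigma_a$.

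For the upper bound, I would fix the first $j$ real-valued Dirichlet eigenfunctions $\varphi_1,\dots,\varphi_j$ of $-\Delta$ in $H^1_0(\Omega)$ and set $u_k^a=\eta_a\,e^{i\alpha\theta_a}\varphi_k$, where $\eta_a$ is a smooth cutoff equal to $1$ away from a thin tubular neighborhood of $\Sigma_a$ and vanishing near $\Sigma_a$ itself, so that $u_k^a$ is genuinely single-valued on $\Omega\setminus\{a\}$ and lies in the magnetic form domain. The gauge identity yields $(i\nabla+A_a)u_k^a=e^{i\alpha\theta_a}\bigl(i(\nabla\eta_a)\varphi_k+i\eta_a\nabla\varphi_k\bigr)$, so the magnetic Dirichlet integral of $u_k^a$ differs from $\int_\Omega\eta_a^2|\nabla\varphi_k|^2$ by terms controlled by $\|\nabla\eta_a\|_{L^2}$ and $\|\varphi_k\|_{L^\infty}$. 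A two-dimensional logarithmic capacity construction of $\eta_a$ around the one-dimensional segment $\Sigma_a$, combined with $|\Sigma_a|=\mathrm{dist}(a,\partial\Omega)\to 0$, makes all such error terms $o(1)$. The $j$-dimensional span of $u_1^a,\dots,u_j^a$ then delivers $\lambda_j^a\le\lambda_j+o(1)$ via min-max.

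For the lower bound, the upper bound first provides a uniform bound on $\lambda_j^a$, whence the normalized magnetic eigenfunctions $\varphi_j^a$ are bounded in the magnetic Sobolev space. The inverse gauge transformation $v_j^a:=e^{-i\alpha\theta_a}\varphi_j^a$ satisfies $-\Delta v_j^a=\lambda_j^a v_j^a$ on $\Omega\setminus\Sigma_a$, Dirichlet conditions on $\partial\Omega$, and the jump $v_j^a(x^+)=e^{2\pi i\alpha}v_j^a(x^-)$ across $\Sigma_a$. Extracting a subsequence $a_n\to\partial\Omega$ with $\lambda_j^{a_n}\to\Lambda_j$, and using that $\Sigma_{a_n}$ collapses to a single boundary point (a set of zero $H^1$-capacity in the plane), the $v_j^{a_n}$ converge weakly in $H^1$ and strongly in $L^2$ to an element of $H^1_0(\Omega)$ solving $-\Delta v=\Lambda_j v$. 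Preservation of $L^2$-orthonormality under the gauge transformation, together with the standard matching of the first $j$ eigenvalues of two sequences of self-adjoint operators with the same asymptotic quadratic forms, forces $\Lambda_j=\lambda_j$.

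The main obstacle I anticipate is the cutoff construction $\eta_a$ and the symmetric issue of absorbing the jump condition in the limit: in the plane a one-dimensional segment has positive $H^1$-capacity, and one must exploit the shrinking length of $\Sigma_a$ through careful logarithmic cutoffs to keep the $H^1$-cost of $\eta_a$ negligible while preserving linear independence and $L^2$-proximity to $\varphi_k$. The dual issue on the lower-bound side is showing that the jump trace on $\Sigma_{a_n}$ disappears in the $H^1$-limit, so that the limiting function genuinely belongs to $H^1_0(\Omega)$ and is admissible for the Dirichlet min-max.
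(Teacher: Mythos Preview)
Your overall strategy matches the paper's: min--max, gauge transformation, cutoff for the upper bound, compactness and limiting equation for the lower bound. The upper-bound argument is essentially identical; the paper also sets $\tilde\varphi_k=e^{i\alpha\theta_a}\eta_a\varphi_k$ and uses the identity $(i\nabla+A_a)(e^{i\alpha\theta_a}v)=ie^{i\alpha\theta_a}\nabla v$. One simplification: instead of a tubular cutoff around the segment $\Sigma_a$, the paper takes a radial cutoff vanishing on the disk $D_{2|b-a|}(a)$, which already contains $\Sigma_a$; the standard 2D logarithmic cutoff around a \emph{point} then suffices, and the segment-capacity worry you raise disappears.

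The real difference is in the lower bound. You un-gauge with $e^{-i\alpha\theta_a}$, producing $v_j^a$ that solve $-\Delta v=\lambda_j^a v$ on $\Omega\setminus\Sigma_a$ but carry a jump across $\Sigma_a$; as you note, such functions are \emph{not} in $H^1(\Omega)$ for fixed $a$, so ``weak convergence in $H^1$'' needs justification, and the vanishing of the jump in the limit is delicate. The paper avoids this entirely with a cleaner device: since the target point $b$ lies on $\partial\Omega$, the angle $\theta_b$ admits a branch cut lying \emph{outside} $\Omega$, hence $\theta_b\in C^\infty(\Omega)$. One then works with $e^{-i\alpha\theta_b}\varphi_j^a$, which lies in $H^1_0(\Omega)$ directly---no jump, no varying domain. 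Testing against $\phi\in C_0^\infty(\Omega)$ supported away from the shrinking segment and using $A_a\to A_b$ in $C^\infty(\mathrm{supp}\,\phi)$, one passes to the limit to obtain an $L^2$-orthonormal family of Dirichlet eigenfunctions with eigenvalues $\liminf\lambda_j^a$, whence $\lambda_j\le\liminf\lambda_j^a$. Your route can be completed, but gauging at $b$ rather than at $a$ removes precisely the obstacle you anticipated. Finally, note that the full statement also asserts interior continuity of $a\mapsto\lambda_j^a$; the paper handles this separately (same two-step scheme with $b\in\Omega$), while your proposal addresses only the boundary case.
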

We remark that this holds for every $\alpha\in (0,1)$. As an immediate consequence of this result, we have that this map, being constant on $\partial\Omega$, always admits an interior extremal point.
\begin{corollary}\label{cor:extremal_point_interior_existence}
For every $j\in \N\setminus\{0\}$, the function $a\in\Omega \mapsto \lambda_j^a \in \R$ has an extremal point in $\Omega$.
\end{corollary}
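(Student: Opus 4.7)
The plan is to deduce the corollary from Theorem \ref{theorem:continuity} by a direct compactness argument. First, I would observe that Theorem \ref{theorem:continuity} provides a continuous extension of the map $a \mapsto \lambda_j^a$ to the compact set $\overline{\Omega}$, and that the extended map takes the constant value $\lambda_j$ on $\partial\Omega$. By the Weierstrass theorem, the extended map attains both its maximum $M$ and its minimum $m$ on $\overline\Omega$.

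Next, I would split into two cases according to whether the function is identically constant. If $\lambda_j^a \equiv \lambda_j$ on $\overline\Omega$, then every point of $\Omega$ is trivially both a maximum and a minimum point, so any interior point serves as an extremal point and we are done. Otherwise, there exists $a_0 \in \Omega$ with $\lambda_j^{a_0} \neq \lambda_j$, so either $M > \lambda_j$ or $m < \lambda_j$ (possibly both). In the former case, since $\lambda_j^a = \lambda_j < M$ for every $a \in \partial\Omega$, the maximum cannot be realized on the boundary and must therefore be attained at some point of $\Omega$; symmetrically in the latter case for the minimum. In all situations we produce an extremal point lying in the open set $\Omega$.

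There is essentially no obstacle here: the entire content is packed into Theorem \ref{theorem:continuity}, and the corollary is a one-line consequence of continuity on a compact set together with the fact that the boundary trace is constant. The only tiny subtlety is to be careful about what \emph{extremal point} means; interpreting it as a global maximizer or minimizer of $a\mapsto \lambda_j^a$ on $\Omega$, the above dichotomy covers all cases and yields the conclusion immediately.
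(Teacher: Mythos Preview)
Your argument is correct and matches the paper's approach exactly: the paper states the corollary as ``an immediate consequence'' of Theorem~\ref{theorem:continuity}, noting only that the map is constant on $\partial\Omega$, without writing out any further details. Your expanded dichotomy (constant versus non-constant) is precisely the standard reasoning behind that one-line remark.
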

%
Heuristically, we can interpret the previous theorem thinking at a magnetic potential $A_b$, singular at $b\in\partial\Omega$. The domain $\Omega\setminus\{b\}$ coincides with $\Omega$, so that it has a trivial topology. For this reason, the magnetic potential is not experienced by a particle moving in $\Omega$ and the operator acting on the particle is simply the Laplacian.

This result was first conjectured in the case $k=1$ in \cite{TN1}, where it was applied to show that the function $a\mapsto \lambda_1^a$ has a global interior maximum, where it is not differentiable, corresponding to an eigenfunction of multiplicity exactly two. 
Numerical simulations in \cite{BH} supported the conjecture for every $k$.
During the completion of this work, we became aware that the continuity of the eigenvalues with respect to multiple moving poles has been obtained independently in \cite{Lena}.

We remark that the continuous extension up to the boundary is a non-trivial issue because the nature of the the operator changes as $a$ approaches $\partial\Omega$. 
This fact can be seen in the more specific case when $\alpha=1/2$, which is equivalent to the standard Laplacian on the double covering (see \cite{HHHO1999,HHOHOO,TN1}). We go then from a problem on a fixed domain with a varying operator (which depends on the singularity $a$) to a problem with a fixed operator (the laplacian) and a varying domain (for the convergence of the eigenvalues of elliptic operators on varying domains, we refer to \cite{AD,D}).
In this second case, the singularity is transferred from the operator into the domain. Indeed, when $a$ approaches the boundary, the double covering develops a corner at the origin. In particular, Theorem 7.1 in \cite{HHT2010dim3} cannot be applied in our case since there is no convergence in capacity of the domains.

In the light of the previous corollary it is natural to study additional properties of the extremal points.
Our aim is to establish a relation between the nodal properties of $\varphi_j^b$ and the vanishing order of $|\lambda_j^a-\lambda_j^b|$ as $a\to b$. First of all we will need some additional regularity, which is guaranteed by the following theorem in case of simple eigenvalues and regular domain.
\begin{theorem}\label{theorem:differentiability_simple_eigenvalue}
Let $b\in\Omega$. If $\lambda^b_j$ is simple, then, for every $j\in \N\setminus\{0\}$, the map $a \in \Omega \mapsto \lambda^a_j$ is locally of class $C^\infty$ in a neighborhood of $b$.
\end{theorem}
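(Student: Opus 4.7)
The plan is to reduce Theorem~\ref{theorem:differentiability_simple_eigenvalue} to classical perturbation theory for smooth families of self-adjoint operators with common form domain. The difficulty is that, since $A_a$ is singular precisely at $a$, the form domains of $(i\nabla + A_a)^2$ appear to move with $a$, and one cannot differentiate the quadratic form in $a$ directly. I would therefore begin by ``pinning'' the singularity at the fixed point $b$ via a change of variables.

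Fix $b\in\Omega$. Choose a family of diffeomorphisms $F_a:\overline\Omega\to\overline\Omega$, depending smoothly on $a$ in a small disk around $b$, with $F_b=\mathrm{id}$, $F_a(b)=a$, and $F_a$ equal to the identity outside a fixed compact neighborhood of $b$ in $\Omega$. Such a family can be obtained by flowing along a suitable compactly supported vector field. The substitution $u\mapsto v:=u\circ F_a$ is an isomorphism of $H^1_0(\Omega)$, and the change of variable $y=F_a(x)$ rewrites the quadratic form
\[
q_a(u)=\int_\Omega |(i\nabla+A_a)u|^2\,dy
\]
as a new form $\tilde q_a(v)$ on $H^1_0(\Omega)$ whose magnetic potential is $\tilde A_a(x)=DF_a(x)^T A_a(F_a(x))=\alpha\,\nabla_x(\theta_a\circ F_a)(x)$, where $\theta_a(y)=\arg(y-a)$. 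Since $F_a$ is an orientation-preserving diffeomorphism mapping $b$ to $a$, the multivalued function $\theta_a\circ F_a$ has exactly the same $2\pi$-circulation around $b$ as $\theta_b$; hence their difference $\psi(a,\cdot)$ is single-valued and, jointly in $(a,x)$, smooth in a neighborhood of $(b,b)$. Together with the smooth metric factors $DF_a^{-1}DF_a^{-T}$ and $\det DF_a$, this shows that, after a smooth gauge transformation by $e^{-i\alpha\psi(a,\cdot)}$, the form $\tilde q_a$ is a smooth family of sesquilinear forms on the fixed form domain of the Aharonov-Bohm operator with pole at $b$. The pulled-back $L^2$ scalar product likewise becomes a smooth perturbation of the standard one.

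Having obtained a smooth family $\tilde L_a$ of self-adjoint operators on a fixed Hilbert space with common form domain, the conclusion follows from Kato's perturbation theory: because $\lambda_j^b$ is a simple, isolated eigenvalue of $\tilde L_b$, for $a$ close to $b$ there is a unique eigenvalue of $\tilde L_a$ near $\lambda_j^b$, depending in a $C^\infty$ manner on $a$, and by the continuity statement of Theorem~\ref{theorem:continuity} this eigenvalue must coincide with $\lambda_j^a$. The hardest step, and the one deserving most care, is the second one: verifying that the combined effect of the diffeomorphism and the gauge transformation really does rewrite the moving-pole operators as a $C^\infty$ family acting on one and the same form domain. Once this is in place, the differentiability statement is standard.
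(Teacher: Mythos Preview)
Your approach is essentially the paper's: pull back by a diffeomorphism sending $b$ to $a$ so as to work on the fixed form domain $H^1_{A_b}(\Omega)$, and then invoke a standard smoothness result for simple eigenvalues. The paper carries out the last step via the implicit function theorem applied to the map $(a,v,\lambda)\mapsto\bigl((i\nabla+A_b)^2 v+\mathcal L_a v-\lambda v,\ \|v\|_{L^2(|\Phi_a'|^2\,dx)}^2-1\bigr)$; your appeal to Kato is an equivalent packaging.

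One point in your sketch is not yet right. You deduce that $\psi(a,\cdot)=\theta_a\circ F_a-\theta_b$ is smooth near $b$ from the fact that it is single-valued, but single-valuedness does not by itself cancel the singularity at the pole. For a generic $F_a$ with $DF_a(b)$ not a rotation--dilation, writing $F_a(x)-a=DF_a(b)(x-b)+O(|x-b|^2)$ shows that $\arg(F_a(x)-a)-\arg(x-b)$ has a nontrivial limit depending on the direction of $x-b$; then $\psi$ is discontinuous at $b$, $\tilde A_a-A_b$ remains of order $|x-b|^{-1}$, and the gauged family is not $C^\infty$ in $a$ on $H^1_{A_b}(\Omega)$. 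The cure is exactly the paper's choice: take $F_a(x)=x+\xi(x)(a-b)$ with a cutoff $\xi\equiv1$ near $b$, so that $F_a$ is a pure translation near $b$. Then $F_a(x)-a=x-b$ there, hence $\theta_a\circ F_a=\theta_b$ and $\tilde A_a=A_b$ identically in a neighborhood of $b$; no gauge is needed near the pole, and in the annulus where $\xi$ varies all potentials are genuinely $C^\infty$, so the pulled-back operator differs from $(i\nabla+A_b)^2$ by a second-order operator with smooth, compactly supported coefficients depending smoothly on $a$. With this specification your argument is complete and coincides with the paper's.
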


In order to examine the link with the nodal set of eigenfunctions, we shall focus on the case $\alpha=1/2$. In this case, it was proved in \cite{HHHO1999,HHOHOO,TN1} (see also Proposition \ref{proposition:asymptotic_expansion_eigenfunction} below) that the eigenfunctions have an odd number of nodal lines ending at the pole $a$ and an even number of nodal lines meeting at zeros different from $a$. 
We say that an eigenfunction has a zero of order $k/2$ at a point if it has $k$ nodal lines meeting at such point. More precisely, we give the following definition.

\begin{definition}[Zero of order $k/2$]
Let $f:\Omega\to\C$, $b\in\Omega$ and $k\in\N\setminus\{0\}$.
\begin{itemize}
\item[(i)] If $k$ is even, we say that $f$ has a zero of order $k/2$ at $b$ if it is of class at least $C^{k/2}$ in a neighborhood of $b$ and $f(b)=\ldots=D^{k/2-1}f(b)=0$, while $D^{k/2}f(b)\neq0$.
\item[(ii)] If $k$ is odd, we say that $f$ has a zero of order $k/2$ at $b$ if $f(x^2)$ has a zero of order $k$ at $b$ (here $x^2$ is the complex square).
\end{itemize}
\end{definition}

The following result is proved in \cite{TN1}.

\begin{theorem}[{\cite[Theorem 1.1]{TN1}}]
Suppose that $\alpha=1/2$. Fix any $j\in\N\setminus\{0\}$. If $\varphi_j^b$ has a zero of order $1/2$ at $b\in\Omega$ then either $\lambda_j^b$ is not simple, or $b$ is not an extremal point of the map $a\mapsto \lambda_j^a$.
\end{theorem}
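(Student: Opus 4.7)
The plan is to prove the contrapositive: if $\lambda_j^b$ is simple and $b$ is an extremal point of the map $a\mapsto \lambda_j^a$, then $\varphi_j^b$ must have a zero of order strictly greater than $1/2$ at $b$. By Theorem \ref{theorem:differentiability_simple_eigenvalue}, simplicity of $\lambda_j^b$ implies that $a\mapsto \lambda_j^a$ is of class $C^\infty$ in a neighborhood of $b$, so extremality at $b$ yields
\[
\nabla_a \lambda_j^a \big|_{a=b} = 0.
\]

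The heart of the proof is to translate this vanishing into a condition on the leading coefficient of the local asymptotic expansion of $\varphi_j^b$ at the pole. For $\alpha=1/2$, the eigenfunction expands in polar coordinates $(\rho,\theta)$ centered at $b$ (with a suitable branch cut) as
\[
\varphi_j^b(x) = c_1\, \rho^{1/2} e^{i\theta/2} + \text{higher order terms},
\]
and the statement ``$\varphi_j^b$ has a zero of order $1/2$'' amounts to $c_1\neq 0$. The crucial step is then to establish a Hadamard-type identity of the form
\[
\nabla_a \lambda_j^a \big|_{a=b} = K\,\Phi(c_1),
\]
where $K\neq 0$ and $\Phi:\C\to\R^2$ satisfies $|\Phi(c)|^2 = |c|^4$; equivalently, $\Phi(c_1)=0$ if and only if $c_1=0$. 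Combined with the extremality condition above, this forces $c_1=0$, so that $\varphi_j^b$ has a zero of order at least $3/2$ at $b$, completing the argument.

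To derive the Hadamard formula, the natural strategy is to pass to the double covering $\widetilde\Omega_a$ of $\Omega\setminus\{a\}$, on which the magnetic operator $(i\nabla+A_a)^2$ is unitarily equivalent to the ordinary Dirichlet Laplacian restricted to antisymmetric functions; then a displacement of the pole $a$ becomes a (singular) deformation of $\widetilde\Omega_a$ localized at the branching point. One differentiates the Rayleigh quotient at $a=b$ along a test direction $v\in\R^2$, integrates by parts on the annulus $\Omega\setminus \overline{B_\rho(b)}$, and passes to the limit $\rho\to 0^+$: the bulk terms cancel because $\varphi_j^b$ solves the eigenvalue equation, while the boundary integral on $\partial B_\rho(b)$, computed explicitly from the leading term of the expansion, converges to a nonzero multiple of $|c_1|^2$ whose direction in $\R^2$ is determined by $v$ and the phase of $c_1$.

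The main obstacle is the rigorous justification of this limiting boundary computation. Since $\varphi_j^b$ is only in $H^1$ near the pole and the underlying domain perturbation is itself singular there, one cannot directly invoke the classical Hadamard formula. One must instead work with a Pohozaev-type identity on the small annulus $\Omega\setminus \overline{B_\rho(b)}$, together with a uniform (in $a$ near $b$) control of the local asymptotic expansion of $\varphi_j^a$ at the pole. This uniform control is in turn provided by the smoothness of $a\mapsto \varphi_j^a$ granted by Theorem \ref{theorem:differentiability_simple_eigenvalue}, combined with a blow-up analysis of the eigenfunctions around the moving pole.
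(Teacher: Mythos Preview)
The paper does not prove this statement; it is quoted from \cite{TN1} without argument, so there is no proof in the present paper to compare your proposal against.

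For context, the approach in \cite{TN1} is different from yours. There the result is obtained by a direct competitor construction, in the same spirit as Proposition~\ref{proposition:comportmenteigenvalues} of the present paper: one moves the pole along the single nodal arc of $\varphi_j^b$ ending at $b$ and builds an explicit trial function (a gauge-rotated copy of $\varphi_j^b$ outside a small disk, glued to a suitable replacement inside) whose Rayleigh quotient differs from $\lambda_j^b$ by a term of definite sign at leading order. This shows directly that $\lambda_j^a$ is strictly monotone along that arc near $b$, so $b$ cannot be extremal. No differentiability of $a\mapsto\lambda_j^a$ is required, and in particular Theorem~\ref{theorem:differentiability_simple_eigenvalue} of the present paper is not used (nor could it be, chronologically).

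Your Hadamard-formula route is a legitimate alternative and, if carried out, yields more: an explicit expression for $\nabla_a\lambda_j^a\big|_{a=b}$ in terms of the coefficient $c_1$. But as you yourself note, the singular boundary limit on $\partial B_\rho(b)$ must be justified carefully (the domain perturbation is singular at the branching point and $\varphi_j^b$ is only $C^{0,1/2}$ there), and your proposal only outlines how this is to be done. As written it is a reasonable plan rather than a proof.
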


Under the assumption that $\lambda_j^b$ is simple, we prove here that the converse also holds. In addition, we show that the number of nodal lines of $\varphi_j^b$ at $b$ determines the 
order of vanishing of $|\lambda_j^b - \lambda_j^a|$ as $a \to b$.

\begin{theorem} \label{theorem:comportmentofeigenvelue}
Suppose that $\alpha=1/2$. Fix any $j\in\N\setminus\{0\}$.
If $\lambda_j^{b}$ is simple and $\varphi_j^{b}$ has a zero of order $k/2$ at $b\in\Omega$, with $k\geq3$ odd, then
\begin{align}
|\lambda_j^{a} - \lambda_j^{b}|  \leq C |a-b|^{(k+1)/2} \quad \text{ as }a\to b,
\end{align}
for a constant $C>0$ independent of $a$.
\end{theorem}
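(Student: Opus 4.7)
My plan is to combine the smoothness of $a\mapsto\lambda_j^a$ provided by Theorem~\ref{theorem:differentiability_simple_eigenvalue} with explicit formulas for its low-order Taylor derivatives at $b$, which will be shown to vanish.

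Because $\lambda_j^b$ is simple, Theorem~\ref{theorem:differentiability_simple_eigenvalue} gives that $a\mapsto \lambda_j^a$ is $C^\infty$ in a neighborhood of $b$, so it admits a Taylor expansion at $b$ of any order. To establish $|\lambda_j^a-\lambda_j^b|\le C|a-b|^{(k+1)/2}$ it therefore suffices to show that $D^m \lambda_j^a\big|_{a=b}=0$ for every $1\le m \le (k-1)/2$, after which Taylor's theorem immediately delivers the bound.

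These derivatives are computed via a Hadamard-type formula generalizing the first-order identity of \cite{TN1}. Along a smooth curve $a(t)=b+tv$, one differentiates the eigenvalue identity $(i\nabla+A_{a(t)})^2 \varphi_j^{a(t)} = \lambda_j^{a(t)}\varphi_j^{a(t)}$, performs integration by parts on $\Omega\setminus \overline{B_r(a(t))}$ in order to move derivatives off the singular magnetic potential $A_{a(t)}$, and lets $r\to 0^+$: the bulk contributions cancel thanks to the eigenvalue equation, while the boundary terms on $\partial B_r$ survive and can be evaluated using the local asymptotic expansion from Proposition~\ref{proposition:asymptotic_expansion_eigenfunction}. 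One arrives at an expression of $D^m \lambda_j^a|_{b}$ as a polynomial in the coefficients $c_1^{(m)},\, c_2^{(m)}$ of the $r^{(2m-1)/2}$ term in the expansion of $\varphi_j^b$ at $b$. Under the hypothesis that $\varphi_j^b$ vanishes to order $k/2$ at $b$, these coefficients are zero for every $m$ with $2m-1<k$, i.e.\ for $m\le(k-1)/2$, so the corresponding derivatives are zero.

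The main obstacle is to justify the iterated differentiation rigorously at each order $m\ge 2$: one must propagate the smooth dependence on $a$ of the perturbed eigenfunction $\varphi_j^a$ and of the coefficients of its local expansion at the moving pole, and track the exact pairing between those coefficients and $D^m\lambda_j^a|_b$ in the boundary limit. As a useful variational cross-check, the gauge-transformed test function $u_a=\eta_{a,b}\,e^{i(\theta_a-\theta_b)/2}\,\varphi_j^b$, where $\eta_{a,b}$ is a cutoff vanishing on a tubular neighborhood of the segment $[a,b]$ across which the phase carries its $-1$ discontinuity, leads via the magnetic Bochner identity $\tfrac12\Delta|\varphi_j^b|^2 = |(i\nabla+A_b)\varphi_j^b|^2 - \lambda_j^b|\varphi_j^b|^2$ to the clean Rayleigh formula $R^a(u_a)=\lambda_j^b+\int|\nabla\eta_{a,b}|^2|\varphi_j^b|^2 \big/ \int\eta_{a,b}^2|\varphi_j^b|^2$; choosing the tube width of order $|a-b|$ and using $|\varphi_j^b|^2\sim|x-b|^k$ near $b$ yields the one-sided bound $\lambda_j^a\le \lambda_j^b+C|a-b|^k$, consistent with (and in one direction sharper than) the stated two-sided estimate.
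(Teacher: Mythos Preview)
Your proposal has a genuine gap at its core. The entire argument hinges on the claim that $D^m\lambda_j^a\big|_{a=b}$ can be expressed as a polynomial in the coefficients of the $r^{(2m-1)/2}$ term of the local expansion of $\varphi_j^b$ at $b$. You assert this but do not prove it, and you yourself flag it as ``the main obstacle''. For $m=1$ such a formula is indeed available from \cite{TN1}, but for $m\ge 2$ standard perturbation theory tells you that $D^m\lambda_j^a$ involves not only $\varphi_j^b$ but also the $a$-derivatives $\partial_a\varphi_j^a,\ldots,\partial_a^{m-1}\varphi_j^a$ evaluated at $a=b$, coupled through resolvent terms. These derived eigenfunctions have their own singular behavior at the pole, and there is no a priori reason why the resulting boundary contributions on $\partial B_r$ should depend only on the low-order coefficients of $\varphi_j^b$ itself. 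Without establishing this---and that is essentially the whole theorem---the argument is a sketch, not a proof. Your variational cross-check at the end is honest about giving only the one-sided bound $\lambda_j^a\le\lambda_j^b+C|a-b|^k$, which is not the two-sided estimate you need.

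The paper's proof circumvents this difficulty entirely and is worth contrasting. Rather than computing $D^m\lambda_j^a\big|_{a=b}$ directly, the paper first proves the weaker estimate $|\lambda^a-\lambda^b|\le C|a-b|^{k/2}$ \emph{only along the $k$ nodal arcs} of $\varphi_j^b$ emanating from $b$ (Proposition~\ref{proposition:comportmenteigenvalues}). This is done by building an approximate eigenfunction $u_a$ (via a gauge rotation outside a small disk and a harmonic-type extension inside) and applying the local inversion result of Corollary~\ref{corollary:local_inversion}; the construction crucially uses $\varphi_j^b(a)=0$, which is why it works only on nodal lines. Then the smoothness from Theorem~\ref{theorem:differentiability_simple_eigenvalue} gives a Taylor expansion $\lambda^a-\lambda^b=\sum_h |a-b|^h P_h(\vartheta)+o(|a-b|^H)$ with $P_h$ a homogeneous trigonometric polynomial of degree $h$. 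The nodal-line estimate forces $P_h$ to vanish at $k$ nearly equi-spaced angles, and an elementary algebraic lemma (Lemma~\ref{lemma:annulationpolynome}) shows that a degree-$h$ homogeneous polynomial with $k\ge 2h+1$ such zeros must vanish identically when $k$ is odd. Hence $P_h\equiv 0$ for $h\le (k-1)/2$, and the estimate follows for \emph{all} $a$ near $b$. The point is that the paper trades your hard analytic step (higher-order Hadamard formulas) for a soft one (estimate along special directions) plus pure algebra.
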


In conclusion, in case of half-integer circulation we have the following picture, which completes Corollary \ref{cor:extremal_point_interior_existence}.

\begin{corollary}
Suppose that $\alpha=1/2$. Fix any $j\in\N\setminus\{0\}$. If $b\in\Omega$ is an extremal point of $a\mapsto\lambda_j^a$ then either $\lambda_j^b$ is not simple, or $\varphi_j^b$ has a zero of order $k/2$ at $b$, $k\geq3$ odd. In this second case, the first $(k-1)/2$ terms of the Taylor expansion of $\lambda_j^a$ at $b$ cancel.
\end{corollary}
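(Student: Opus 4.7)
The plan is to deduce the corollary directly from the preceding results, handling the stated dichotomy by assuming $b\in\Omega$ is an extremal point of $a\mapsto\lambda_j^a$ and that $\lambda_j^b$ is simple, and then deriving the second alternative. Under simplicity, Theorem \ref{theorem:differentiability_simple_eigenvalue} guarantees that the map $a\mapsto\lambda_j^a$ is $C^\infty$ in a neighborhood of $b$, so its Taylor expansion at $b$ is well defined; this smoothness will also allow me to upgrade the pointwise bound of Theorem \ref{theorem:comportmentofeigenvelue} into a statement about Taylor coefficients.

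To identify the order of vanishing of $\varphi_j^b$ at $b$, I would invoke the structural description recalled in the paragraph following Theorem \ref{theorem:differentiability_simple_eigenvalue} (and encoded in Proposition \ref{proposition:asymptotic_expansion_eigenfunction}): when $\alpha=1/2$, an Aharonov-Bohm eigenfunction has an odd number of nodal lines ending at its own pole. Hence $\varphi_j^b$ automatically has a zero of order $k/2$ at $b$ for some odd $k$, and it only remains to exclude $k=1$. This is precisely what the quoted \cite[Theorem 1.1]{TN1} does under our standing hypotheses (simplicity of $\lambda_j^b$ together with extremality of $b$). Therefore $k\geq 3$ is odd, which is the content of the second alternative.

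For the Taylor expansion claim, I would write the $C^\infty$ expansion $\lambda_j^a=\lambda_j^b+\sum_{\ell\geq 1}P_\ell(a-b)$ with $P_\ell$ homogeneous of degree $\ell$, and combine it with the bound $|\lambda_j^a-\lambda_j^b|\leq C|a-b|^{(k+1)/2}$ of Theorem \ref{theorem:comportmentofeigenvelue}. Since $k$ is odd, $(k+1)/2$ is an integer $\geq 2$, so uniqueness of the Taylor expansion forces $P_\ell\equiv 0$ for every $1\leq \ell<(k+1)/2$, cancelling the first $(k-1)/2$ nontrivial terms.

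The main obstacle is essentially bookkeeping rather than new analysis: one must make sure that the structural statement about the parity of the number of nodal lines at the pole (needed to conclude $k$ odd) and the scope of \cite[Theorem 1.1]{TN1} (needed to rule out $k=1$) are both applicable in our setting, which they are because we are in the regime $\alpha=1/2$ with $\lambda_j^b$ simple. All the genuine estimates have already been carried out in Theorems \ref{theorem:differentiability_simple_eigenvalue} and \ref{theorem:comportmentofeigenvelue}.
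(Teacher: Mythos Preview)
Your proposal is correct and follows exactly the route the paper intends: the corollary is stated without a separate proof precisely because it is an immediate combination of Proposition \ref{proposition:asymptotic_expansion_eigenfunction} (giving $k$ odd), \cite[Theorem 1.1]{TN1} (ruling out $k=1$ under simplicity and extremality), Theorem \ref{theorem:differentiability_simple_eigenvalue} (smoothness), and Theorem \ref{theorem:comportmentofeigenvelue} (the $|a-b|^{(k+1)/2}$ bound forcing the first $(k-1)/2$ Taylor terms to vanish). The only minor variation is that the paper's proof of Theorem \ref{theorem:comportmentofeigenvelue} already establishes $P_h\equiv 0$ for $1\le h\le (k-1)/2$ directly (Lemmas \ref{lemma:taylordevelopment}--\ref{lemma:annulationpolynome}), so the Taylor cancellation is in fact proved there rather than deduced a posteriori from the final estimate as you do; both arguments are equivalent.
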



In the forthcoming paper \cite{NNT2} we intend to extend Theorem \ref{theorem:comportmentofeigenvelue} to the case $b\in\partial\Omega$. In this case we know from Theorem \ref{theorem:continuity} that $\lambda_j^a$ converges to $\lambda_j$ as $a\to b\in\partial\Omega$ and we aim to estimate the rate of convergence depending on the number of nodal lines of $\varphi_j$ at $b$, motivated by the numerical simulations in \cite{BH}.

We would like to mention that the relation between the presence of a magnetic field and the number of nodal lines of the eigenfunctions, as well as the consequences on the behavior of the eigenvalues, have been recently studied in different contexts, giving rise to surprising conclusions. In \cite{berkolaiko,verdiere} the authors consider a magnetic Schr\"odinger operator on graphs and study the behavior of its eigenvalues as the circulation of the magnetic field varies. In particular, they consider an arbitrary number of singular poles, having circulation close to 0. They prove that the simple eigenvalues of the Laplacian (zero circulation) are critical values of the function $\alpha\mapsto \lambda_j(\alpha)$, which associates to the circulation $\alpha$ the corresponding eigenvalue. In addition, they show that the number of nodal lines of the Laplacian eigenfunctions depends on the Morse index of $\lambda_j(0)$.

The paper is organized as follows. In Section \ref{sec:preliminaries}, we define the functional space $H^{1}_{A_{a}}(\Omega)$, which is the more suitable space to consider our problem. We also recall an Hardy-type inequality and a theorem about the regularity of the eigenfunctions $\varphi_{j}^{a}$. Finally, in the case of an half-integer circulation, we recall the equivalence between the problem we consider and the standard laplacian equation on the double covering. The first part of Theorem \ref{theorem:continuity}, concerning the interior continuity of the eigenvalues $\lambda_{j}^{a}$ is proved in Section \ref{sec:continuity1} and the second part concerning the extension to the boundary is studied in Section \ref{sec:continuity2}. In Section \ref{sec:differentiability}, we prove Theorem \ref{theorem:differentiability_simple_eigenvalue}.  Section \ref{sec:comportementeigenvalues} contains the proof of Theorem \ref{theorem:comportmentofeigenvelue}.
Finally, Section~\ref{sec.numeric} illustrates these results in the case of the angular sector of aperture $\pi/4$ and the square.

\section{Preliminaries}\label{sec:preliminaries}

We will work in the functional space $H^{1}_{A_{a}}(\Omega)$, which is defined as the completion of $C^{\infty}_{0}(\Omega \backslash \{a\})$ with respect to the norm
\[
\| u \|_{H^{1}_{A_{a}}(\Omega)} := \| (i \nabla + A_{a}) u \|_{L^{2}(\Omega)}.
\]
As proved for example in \cite[Lemma 2.1]{TN1}, we have an equivalent characterization
\[
H^{1}_{A_{a}}(\Omega) = \left\{ u \in H^{1}_0(\Omega) \, : \, \frac{u}{|x-a|} \in L^{2}(\Omega) \right\},
\]
and moreover we have that $H^{1}_{A_{a}}(\Omega)$ is continuously embedded in $H^1_0(\Omega)$: there exists a constant $C>0$ such that for every $u\in H^1_{A_a}(\Omega)$ we have
\begin{equation}\label{eq:H_1_A_a_characterization}
\|u\|_{H_0^1(\Omega)}\leq C \|u\|_{H^{1}_{A_{a}}(\Omega) }.
\end{equation}
This is proved by making use of a Hardy-type inequality by Laptev and Weidl \cite{LaWe}. Such inequality also holds for functions with non-zero boundary trace, as shown in \cite[Lemma 7.4]{MOR} (see also \cite{MORerratum}). More precisely, given $D\subset \Omega$ simply connected and with smooth boundary, there exists a constant $C>0$ such that for every $u\in H^1_{A_a}(\Omega)$
\begin{equation}\label{eq:hardy}
\left\|\frac{u}{|x-a|}\right\|_{L^2(D)} \leq C 
\| (i \nabla + A_{a}) u \|_{L^{2}(D)}.
\end{equation}
As a reference on Aharonov-Bohm operators we cite \cite{MR}.
As a consequence of the continuous embedding, we have the following.
\begin{lemma} \label{lemma:compactnessinversemagneticlaplacian}
Let $Im$ be the compact immersion of $H^{1}_{A_{a}}(\Omega)$ into $(H^{1}_{A_{a}}(\Omega))^{\prime}$. Then, the operator $((i \nabla + A_{a})^{2})^{-1} \circ Im : H^{1}_{A_{a}}(\Omega) \rightarrow H^{1}_{A_{a}}(\Omega)$ is compact.
\end{lemma}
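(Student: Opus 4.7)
The plan is to write the operator as a composition of a compact map with a bounded inverse. First I would show that $Im : H^{1}_{A_{a}}(\Omega) \to (H^{1}_{A_{a}}(\Omega))'$ is compact. This follows from the factorization
\[
H^{1}_{A_{a}}(\Omega) \hookrightarrow H^{1}_{0}(\Omega) \hookrightarrow L^{2}(\Omega) \hookrightarrow (H^{1}_{A_{a}}(\Omega))',
\]
where the first embedding is continuous by \eqref{eq:H_1_A_a_characterization}, the second is compact by the Rellich--Kondrachov theorem (since $\Omega$ is bounded), and the third is continuous by Cauchy--Schwarz together with the continuous inclusion $H^{1}_{A_{a}}(\Omega) \hookrightarrow L^{2}(\Omega)$ (any $f\in L^{2}(\Omega)$ acts on $v\in H^{1}_{A_{a}}(\Omega)$ via $\int_{\Omega} f\,\overline v\,dx$).

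Next I would show that $(i\nabla+A_{a})^{2}$ is an isomorphism from $H^{1}_{A_{a}}(\Omega)$ onto its dual with continuous inverse. For this I consider the sesquilinear form
\[
\mathfrak{a}(u,v) := \int_{\Omega} (i\nabla+A_{a})u \cdot \overline{(i\nabla+A_{a})v}\,dx, \qquad u,v \in H^{1}_{A_{a}}(\Omega).
\]
By the very definition of the norm on $H^{1}_{A_{a}}(\Omega)$, the form $\mathfrak{a}$ coincides with the Hilbert inner product: it is continuous and satisfies $\mathfrak{a}(u,u) = \|u\|^{2}_{H^{1}_{A_{a}}(\Omega)}$, so it is coercive. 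The (complex) Lax--Milgram theorem then yields, for each $f \in (H^{1}_{A_{a}}(\Omega))'$, a unique $u \in H^{1}_{A_{a}}(\Omega)$ with $\mathfrak{a}(u,v) = \langle f, v\rangle$ for all $v$, together with the bound $\|u\|_{H^{1}_{A_{a}}(\Omega)} \leq \|f\|_{(H^{1}_{A_{a}}(\Omega))'}$. This is precisely the statement that $((i\nabla+A_{a})^{2})^{-1} : (H^{1}_{A_{a}}(\Omega))' \to H^{1}_{A_{a}}(\Omega)$ is well defined and bounded.

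Composing a bounded operator with a compact one produces a compact operator, so $((i\nabla+A_{a})^{2})^{-1}\circ Im$ is compact, as required. The only mildly delicate point in this plan is checking that the three-step factorization above genuinely gives compactness of $Im$ into the dual of $H^{1}_{A_{a}}(\Omega)$ (rather than, say, the dual of $H^{1}_{0}(\Omega)$); this is handled by testing against $H^{1}_{A_{a}}(\Omega)$-functions directly in $L^{2}$ and invoking \eqref{eq:H_1_A_a_characterization}. Everything else is standard Hilbert space theory, with the Hardy-type estimate \eqref{eq:hardy} / \eqref{eq:H_1_A_a_characterization} doing the real work behind the scenes.
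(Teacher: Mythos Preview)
Your argument is correct and is precisely the standard one: factor $Im$ through $H^1_0(\Omega)\hookrightarrow L^2(\Omega)$ to get compactness, and invert $(i\nabla+A_a)^2$ by Lax--Milgram (or simply by the Riesz isomorphism, since $\mathfrak a$ \emph{is} the inner product). The paper itself does not write out a proof of this lemma; it merely states it ``as a consequence of the continuous embedding'' \eqref{eq:H_1_A_a_characterization}, so your write-up is a faithful expansion of what the authors leave implicit rather than a different approach.
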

As $((i \nabla + A_{a})^{2})^{-1}$ is also symmetric and positive, we deduce that the spectrum of $(i \nabla + A_{a})^{2}$ consists of a diverging sequences of real positive eigenvalues, having finite multiplicity. They also admit the following variational characterization
\begin{align}\label{eigenvalueofmagneticoperator}
\lambda_{j}^{a} = \inf_{ \substack{W_j\subset H^{1}_{A_{a}}(\Omega)\\ \dim W_{j} = j}} \sup_{\Phi \in W_{j}} \frac{\|\Phi\|^2_{H^{1}_{A_{a}}(\Omega)}}{\|\Phi\|^2_{L^2(\Omega)}}.
\end{align}


Recall that $A_a$ has the form \eqref{eq:magnetic_potential_definition} if and only if it satisfies
\begin{equation}\label{eq:ab_potential}
\nabla\times A_a=0 \quad\text{in } \Omega\setminus\{a\} \qquad\text{and} \qquad
\frac{1}{2\pi}\oint_\sigma A_a\cdot dx=\alpha
\end{equation}
for every closed path $\sigma$ which winds once around $a$. The value of the circulation strongly affects the behavior of the eigenfunctions, starting from their regularity, as the following lemma shows.
\begin{lemma}[{\cite[Section 7]{FFT2011}}]\label{lemma:regularity_eigenfunctions}
If $A_a$ has the form \eqref{eq:magnetic_potential_definition} then $\varphi_{j}^{a} \in C^{0,\alpha}(\Omega)$, where $\alpha$ is precisely the circulation of $A_a$. 
\end{lemma}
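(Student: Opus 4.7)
The plan is to split $\Omega$ into the region where $A_a$ is smooth and a small punctured disk around $a$, where the indicial behavior of the magnetic operator must be analyzed by hand.

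Away from the pole, standard elliptic regularity applies directly. On any compact $K \subset \Omega \setminus \{a\}$ the coefficients of the expansion $(i\nabla + A_a)^2 u = -\Delta u + 2iA_a\cdot \nabla u + |A_a|^2 u$ are smooth, so starting from $\varphi_j^a \in H^1_{A_a}(\Omega) \subset H^1_0(\Omega)$ and iterating the classical Schauder bootstrap on the eigenvalue equation yields $\varphi_j^a \in C^\infty(\Omega \setminus \{a\})$. In particular $\varphi_j^a$ is $C^{0,\alpha}$ on every such $K$, and the only remaining task is to control the behavior in a disk $B(a, r_0) \Subset \Omega$.

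In $B(a, r_0)$ I would pass to polar coordinates $(r,\theta)$ centered at $a$. With the explicit $A_a$ from \eqref{eq:magnetic_potential_definition}, the magnetic Laplacian becomes
\[
-\partial_r^2 - \frac{1}{r}\partial_r + \frac{1}{r^2}\bigl(-i\partial_\theta + \alpha\bigr)^2,
\]
and the angular operator $(-i\partial_\theta + \alpha)^2$ on $S^1$ has eigenvalues $(k+\alpha)^2$ with eigenfunctions $e^{ik\theta}$, $k\in\mathbb{Z}$. Expanding $\varphi_j^a(r,\theta) = \sum_{k\in\mathbb{Z}} f_k(r)\, e^{ik\theta}$ decouples the eigenvalue equation into a family of radial Bessel-type ODEs whose indicial exponents at $r=0$ are $\pm|k+\alpha|$. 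The Hardy inequality \eqref{eq:hardy} forces $\varphi_j^a/|x-a|\in L^2$, which excludes the negative exponent and gives the mode-wise bound $f_k(r) = O(r^{|k+\alpha|})$ as $r\to 0$. The minimum of $|k+\alpha|$ over $k\in\mathbb{Z}$ is attained at $k=0$ (value $\alpha$) or $k=-1$ (value $1-\alpha$), which identifies the leading order of decay at $a$ and should translate, via standard reconstruction of the full solution from its Fourier modes, into a pointwise bound $|\varphi_j^a(x)|\leq C\,|x-a|^\alpha$ near $a$. Combining this with the smoothness of $\varphi_j^a$ away from $a$ and interpolating via difference quotients yields the global Hölder estimate $\varphi_j^a \in C^{0,\alpha}(\Omega)$.

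The main obstacle is upgrading the indicial expansion mode by mode into a uniform pointwise (and Hölder) bound on the full series: one must control the tail $\sum_{|k|\geq N} f_k(r)\, e^{ik\theta}$ and rule out cancellations in the leading mode. Following [FFT2011, Section 7], the cleanest way is an Almgren-type monotonicity formula for the Aharonov-Bohm operator at the pole $a$, which shows that the Almgren quotient of $\varphi_j^a$ converges to some $(k_0+\alpha)^2$ and locks in the leading asymptotic profile; the Hölder exponent then reads off from the minimal admissible exponent, namely $\alpha$.
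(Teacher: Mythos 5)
The paper does not actually prove this lemma: it is imported verbatim from \cite[Section 7]{FFT2011}, so there is no internal argument to compare against. Your reconstruction does follow the strategy of that reference (interior elliptic regularity away from the pole, separation of variables in polar coordinates at the pole, and an Almgren-type monotonicity formula to lock in the leading asymptotic profile), and the individual ingredients are right: the polar form of $(i\nabla+A_a)^2$, the angular eigenvalues $(k+\alpha)^2$, the indicial roots $\pm|k+\alpha|$, and the exclusion of the negative root via the integrability $\varphi_j^a/|x-a|\in L^2(\Omega)$ coming from \eqref{eq:hardy}.

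The gap is in the final bookkeeping of the exponent. Your own computation shows that the admissible decay rates are $|k+\alpha|$, $k\in\Z$, whose minimum is $\min(\alpha,1-\alpha)$, attained at $k=0$ or $k=-1$; yet you then assert the pointwise bound $|\varphi_j^a(x)|\le C|x-a|^{\alpha}$ and the H\"older exponent $\alpha$. This does not follow: for $\alpha>1/2$ the $k=-1$ mode decays slowest and the argument only yields $C^{0,1-\alpha}$, hence $C^{0,\min(\alpha,1-\alpha)}$ in general. Nor is this a removable defect of the method: complex conjugation intertwines the operators with circulations $\alpha$ and $1-\alpha$ (up to an integer gauge), so the sharp H\"older exponent must be symmetric under $\alpha\mapsto 1-\alpha$, i.e.\ it equals $\min(\alpha,1-\alpha)$, not $\alpha$, when $\alpha>1/2$. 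Your proof therefore establishes the stated conclusion only for $\alpha\le 1/2$ — which does cover the case $\alpha=1/2$ used later, and in any event the paper only invokes the lemma for an $L^\infty$ bound, for which any positive exponent suffices. Two smaller points: passing from mode-wise decay to a uniform H\"older estimate near $a$ requires rescaled elliptic gradient estimates on dyadic annuli (giving $|\nabla\varphi_j^a|\le Cr^{\gamma-1}$), not merely ``interpolating via difference quotients''; and, as you yourself observe, the Fourier tail cannot be summed naively, which is precisely why \cite{FFT2011} runs the Almgren monotonicity and blow-up argument — so your last paragraph is not an optional refinement but the actual proof.
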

If the circulations of two magnetic potentials differ by an integer number, the corresponding operators are equivalent under a gauge transformation, so that they have the same spectrum (see \cite[Theorem 1.1]{HHHO1999} and \cite[Lemma 3.2]{TN1}). For this reason, we can set $\chi=0$ in \eqref{eq:ab_potential} and we can consider $\alpha$ in the interval $(0,1)$ without loosing generality. Moreover, in the same papers, it is shown that, when the circulations differ by a value $1/2$, one operator is equivalent to the other one composed with the complex square root. In particular, in case of half-integer circulation the operator is equivalent to the standard Laplacian in the double covering.
\begin{lemma}[{\cite[Lemma 3.3]{HHHO1999}}]\label{eq:gauge_invariance}
Suppose that $A_a$ has the form \eqref{eq:ab_potential} with $\alpha=1/2$ (and $\chi=0$). Then the function
\[
e^{-i\theta(y)}\varphi_j^a(y^2+a) \quad \text{defined in } \{y\in\C:\ y^2+a\in\Omega\},
\]
(here $\theta$ is the angle of the polar coordinates) is real valued and solves the following equation on its domain
\[
-\Delta(e^{-i\theta(y)}\varphi_j^a(y^2+a))=4\lambda_j^a |y|^2 e^{-i\theta(y)}\varphi_j^a(y^2+a).
\] 
\end{lemma}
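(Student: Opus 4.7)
The plan is to verify both assertions — reality of $v(y) := e^{-i\theta(y)}\varphi_j^a(y^2+a)$ and the stated PDE it satisfies — by combining a local gauge transformation in $x$-coordinates with the conformal change of variables $x = y^2+a$. The half-integer value $\alpha = 1/2$ is precisely what makes these two operations close up consistently on the full $y$-domain.

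First, on any simply connected open set $U\subset\Omega\setminus\{a\}$ the polar angle $\theta(\cdot-a)$ admits a single-valued branch, and by \eqref{eq:ab_potential} with $\alpha=1/2$ and $\chi=0$ one has $A_a = \tfrac{1}{2}\nabla\theta(\cdot-a)$ on $U$. A direct computation yields the gauge identity $(i\nabla+A_a)^2 = e^{i\theta/2}(-\Delta)e^{-i\theta/2}$ on $U$, so the locally defined function $\psi(x) := e^{-i\theta(x-a)/2}\varphi_j^a(x)$ satisfies $-\Delta\psi = \lambda_j^a\,\psi$ in $U$. Globally on $\Omega\setminus\{a\}$, the function $\psi$ is two-valued: it changes sign along any loop encircling $a$.

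Next, I would pull everything back through $x = y^2+a$. Since $\arg(y^2)\equiv 2\arg(y)\pmod{2\pi}$, the factor $e^{-i\theta(x-a)/2}\big|_{x=y^2+a}$ equals $e^{-i\theta(y)}$ after a fixed choice of branch, and the latter is single-valued on $D := \{y\ne 0 : y^2+a\in\Omega\}$; thus $v = \psi\circ(y\mapsto y^2+a)$ is single-valued on $D$. The holomorphy of $y\mapsto y^2$ gives the chain-rule identity $\Delta_y\bigl[f(y^2+a)\bigr] = 4|y|^2\,(\Delta_x f)(y^2+a)$ for any $C^2$ function $f$, and applied to $\psi$ it yields $-\Delta_y v = 4\lambda_j^a|y|^2\,v$ on $D$. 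Because $\varphi_j^a\in C^{0,1/2}(\Omega)$ by Lemma \ref{lemma:regularity_eigenfunctions} and the right-hand side carries a factor $|y|^2$, a standard $H^1$ removable-singularity argument extends the equation across $y=0$.

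For the reality claim, I would exploit the fact that the correspondence $\varphi_j^a\leftrightarrow v$ identifies the magnetic eigenvalue problem for $(i\nabla+A_a)^2$ on $\Omega$ with the weighted eigenvalue problem for the real operator $-\Delta$ against the weight $4|y|^2$ on the lifted domain, restricted to the anti-symmetric subspace $\{v : v(-y)=-v(y)\}$ (the anti-symmetry encodes the sign picked up by $e^{-i\theta/2}$ along a loop around $a$) with Dirichlet conditions on the lift of $\partial\Omega$. Since this lifted eigenvalue problem is self-adjoint with real coefficients and real boundary data, each of its eigenspaces admits a basis of real-valued eigenfunctions; the original eigenfunction $\varphi_j^a$ — which is only determined up to a unit complex multiple — can therefore be normalized so that $v$ is real. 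The main obstacle is a careful bookkeeping at the branch point $y=0$: one must check that the energy space on the lifted domain corresponds exactly to $H^1_{A_a}(\Omega)$ under the conformal substitution, so that the lifted problem is genuinely equivalent to the original one and the removability of the singularity at $y=0$ can be applied without losing eigenfunctions.
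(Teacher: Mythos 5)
The paper does not actually prove this lemma---it is imported from \cite[Lemma 3.3]{HHHO1999}---and your reconstruction (the local gauge identity $(i\nabla+A_a)^2=e^{i\theta_a/2}(-\Delta)e^{-i\theta_a/2}$, single-valuedness of the pullback under $x=y^2+a$ thanks to the half-integer flux, the conformal factor $4|y|^2$, the removable singularity at $y=0$, and reality via the antisymmetric real eigenvalue problem on the double covering) is exactly the standard argument behind that citation. Your caveat that ``real valued'' must be read as ``real valued after multiplying $\varphi_j^a$ by a suitable unit constant'' (the $K$-real normalization of the cited reference) is the correct and necessary reading of the statement, so the proposal is sound.
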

As a consequence, we have that, in case of half integer circulation, $\varphi_j^a$ behaves, up to a complex phase, as an elliptic eigenfunction far from the singular point $a$. The behavior near $a$  is, up to a complex phase, that of the square root of an elliptic eigenfunction. We summarize the known properties that we will need in the following proposition. The proofs can be found in \cite[Theorem 1.3]{FFT2011}, \cite[Theorem 2.1]{HHHO1999} and \cite[Theorem 1.5]{TN1} (see also \cite{HW}).

\begin{proposition}\label{proposition:asymptotic_expansion_eigenfunction}
Let $\alpha=1/2$. There exists an odd integer $k\geq1$ such that $\varphi_j^a$ has a zero of order $k/2$ at $a$. Moreover, the following asymptotic expansion holds near $a$
\begin{equation*}
\varphi_j^a(|x-a|,\theta_a)=e^{i\alpha\theta_a}\frac{|x-a|^{k/2}}{k} \left[ c_k\cos(k\alpha\theta_a) + d_k \sin(k\alpha\theta_a) \right] + g(|x-a|,\theta_a)
\end{equation*}
where $x-a=|x-a|e^{i\theta_a}$, $c_k^2+d_k^2\neq0$ and the remainder $g$ satisfies
\begin{equation*}
\lim_{r\to0} \frac{\|g(r,\cdot)\|_{C^1(\partial D_{r}(a))}}{r^{k/2}}=0,
\end{equation*}
where $D_r(a)$ is the disk centered at $a$ of radius $r$.
In addition, there is a positive radius $R$ such that $(\varphi_j^a)^{-1}(\{0\})\cap D_R(a)$ consists of $k$ arcs of class $C^\infty$. If $k\geq 3$ then the tangent lines to the arcs at the point $a$ divide the disk into $k$ equal sectors.
\end{proposition}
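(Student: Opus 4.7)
The plan is to use the double-cover equivalence of Lemma \ref{eq:gauge_invariance} to replace the magnetic eigenfunction by a smooth solution of a standard elliptic eigenvalue equation with a $\Z_2$-symmetry, and then read off the asymptotics from the Taylor expansion at the origin.

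First I would set $v(y):=e^{-i\theta(y)}\varphi_j^a(y^2+a)$ on a neighborhood $U$ of $0$ in the lift $\{y\in\C:y^2+a\in\Omega\}$. By Lemma \ref{eq:gauge_invariance}, $v$ is real-valued and satisfies $-\Delta v=4\lambda_j^a|y|^2 v$ in $U$. Since the potential $4\lambda_j^a|y|^2$ is smooth, standard elliptic regularity gives $v\in C^\infty(U)$. A direct check using the $2\pi$-jump of $\theta$ shows $v(-y)=-v(y)$; equivalently, the Taylor series of $v$ at $0$ contains only odd-degree homogeneous parts, $v(y)=\sum_{n\ \mathrm{odd}}P_n(y)$.

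Next I would invoke strong unique continuation for $-\Delta+V$ with smooth $V$ (as used in \cite{FFT2011} via an Almgren-type frequency function) to exclude infinite-order vanishing of $v$ at $0$. This produces a smallest odd integer $k\ge1$ with $P_k\not\equiv0$. Substituting the expansion into the equation, the right-hand side $4\lambda_j^a|y|^2 v$ is of order at least $k+2$, so matching degree $k$ forces $\Delta P_k=0$. The harmonic homogeneous polynomials of degree $k$ in $\R^2$ are exactly $P_k(y)=|y|^k[\tilde c_k\cos(k\theta(y))+\tilde d_k\sin(k\theta(y))]$ with $(\tilde c_k,\tilde d_k)\neq(0,0)$. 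Returning to the $x$-variable via $|y|=|x-a|^{1/2}$, $2\theta(y)=\theta_a$, and $\alpha=1/2$, the identity $\varphi_j^a=e^{i\alpha\theta_a}v(y)$ produces the announced expansion after absorbing constants into $c_k,d_k$. The remainder $g=e^{i\alpha\theta_a}(v(y)-P_k(y))$ satisfies $v(y)-P_k(y)=O(|y|^{k+2})$ because the next non-zero homogeneous term has odd degree $\ge k+2$; rewriting in polar $x$-coordinates and differentiating the smooth expansion of $v$ on a circle yields $\|g(r,\cdot)\|_{C^1(\partial D_r(a))}=O(r^{(k+2)/2})=o(r^{k/2})$.

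For the nodal structure near $a$, I would argue in the $y$-plane first. The leading harmonic polynomial $P_k$ vanishes on $2k$ rays from the origin, equally spaced in $\theta(y)$; since $v=P_k+O(|y|^{k+2})$ with $v$ smooth and $\nabla P_k$ non-degenerate away from $0$, the Hartman--Wintner/Bers theory for nodal sets of elliptic solutions gives that $v^{-1}(0)$ in a small disk consists of $2k$ arcs of class $C^\infty$, each tangent at $0$ to one of those rays. The antisymmetry $v(-y)=-v(y)$ pairs these arcs into $k$ antipodal pairs $\{\gamma,-\gamma\}$, and under the projection $y\mapsto y^2+a$ each pair is identified into a single $C^\infty$ arc ending at $a$. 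When $k\ge3$, the rays $\theta(y)=\theta_0+(2\ell+1)\pi/(2k)$ pass, under $\theta_a=2\theta(y)$, to directions equally spaced by $2\pi/k$ in $\theta_a$, so the $k$ tangent half-lines at $a$ split $D_R(a)$ into $k$ equal sectors.

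The main obstacle is Step 2: ruling out infinite-order vanishing of $v$ at the origin to secure the integer $k$, and simultaneously identifying the leading homogeneous part as a harmonic polynomial. This is where strong unique continuation and the frequency-function machinery of \cite{FFT2011} are essential, and it is also what underlies the $C^1$ remainder estimate and the smoothness of the nodal arcs via Hartman--Wintner.
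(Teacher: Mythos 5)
The paper does not prove this proposition itself: it imports it from \cite{FFT2011}, \cite{HHHO1999}, \cite{TN1} and \cite{HW}, and your sketch is a faithful reconstruction of exactly that argument --- the double-covering reduction of Lemma \ref{eq:gauge_invariance}, oddness of the lifted real solution, strong unique continuation to fix the finite odd vanishing order $k$, identification of the leading harmonic homogeneous part, and Hartman--Wintner for the $2k$ nodal arcs upstairs projecting to $k$ equiangular arcs at $a$. So the proposal is correct and takes essentially the same route as the sources the paper relies on.
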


\section{Continuity of the eigenvalues with respect to the pole in the interior of the domain} \label{sec:continuity1}

In this section we prove the first part of Theorem \ref{theorem:continuity}, that is the continuity of the function $a\mapsto\lambda_j^a$ when the pole $a$ belongs to the interior of the domain.

\begin{lemma} \label{lemma:cut_off_function}
Given $a,b\in\Omega$ there exists a radial cut-off function $\eta_a:\R^2\to \R$ such that $\eta_a(x)=0$ for $|x-a|<2|b-a|$ and moreover
\[
\int_{\R^{2}} \left( |\nabla \eta_a |^{2} + (1 - \eta_a^{2}) \right) \, dx \to 0 \quad \text{ as } a\to b.
\]
\end{lemma}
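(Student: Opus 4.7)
The plan is to use the classical logarithmic cutoff, whose Dirichlet energy is known to decay like $1/\log(1/\varepsilon)$. Set $\varepsilon := |b-a|$, so that $\varepsilon \to 0$ as $a\to b$, fix once and for all some $R>0$ independent of $a$ and $b$ (any constant works; one may take $R=1$), and, for $\varepsilon < R/2$, define the radial function
\begin{equation*}
\eta_a(x) := \begin{cases} 0 & \text{if } |x-a| \le 2\varepsilon, \\ \dfrac{\log(|x-a|/(2\varepsilon))}{\log(R/(2\varepsilon))} & \text{if } 2\varepsilon < |x-a| < R, \\ 1 & \text{if } |x-a| \ge R. \end{cases}
\end{equation*}
By construction $\eta_a$ is Lipschitz, takes values in $[0,1]$, is radial around $a$, and vanishes on $D_{2\varepsilon}(a)$, which is exactly the first requirement.

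For the Dirichlet part, in polar coordinates centered at $a$ we have $|\nabla\eta_a(x)| = (|x-a|\log(R/(2\varepsilon)))^{-1}$ on the annulus and $0$ elsewhere, whence a direct integration gives
\begin{equation*}
\int_{\R^2}|\nabla\eta_a|^2\,dx = \frac{2\pi}{\log^2(R/(2\varepsilon))}\int_{2\varepsilon}^R \frac{dr}{r} = \frac{2\pi}{\log(R/(2\varepsilon))} \longrightarrow 0.
\end{equation*}

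For the second term, the integrand $1-\eta_a^2$ is supported in $D_R(a)$, which is contained in the fixed compact set $D_{2R}(b)$ once $\varepsilon$ is small. The contribution from the inner disk $D_{2\varepsilon}(a)$ is $4\pi\varepsilon^2\to 0$. On the annulus $D_R(a)\setminus D_{2\varepsilon}(a)$ we have $0\le 1-\eta_a^2\le 1$, and for each fixed $x\neq b$, writing $t:=-\log(2\varepsilon)\to+\infty$,
\begin{equation*}
\eta_a(x) = \frac{\log|x-a|+t}{\log R + t} \longrightarrow 1,
\end{equation*}
so $1-\eta_a^2\to 0$ pointwise on the annulus. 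Dominated convergence, with the dominating function $\mathbf{1}_{D_{2R}(b)}$, then yields $\int_{\R^2}(1-\eta_a^2)\,dx \to 0$, which combined with the gradient estimate proves the claim.

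I do not expect any serious obstacle: the construction is completely elementary. The only subtle point is the pointwise convergence $\eta_a(x)\to 1$ on the annulus, which relies on the observation that the numerator and denominator of the ratio defining $\eta_a$ share the same leading behavior $-\log(2\varepsilon)$ as $\varepsilon\to 0$; this is precisely why the logarithmic profile, rather than a linear one, is needed to make both integrals vanish simultaneously.
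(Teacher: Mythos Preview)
Your proof is correct and follows essentially the same approach as the paper, namely the classical logarithmic cutoff centered at $a$. The paper's only difference is that it takes the outer radius of the transition annulus to be $\sqrt{2|b-a|}$ rather than a fixed constant $R$, so the support of $1-\eta_a^2$ shrinks and $\int(1-\eta_a^2)\,dx\le 2\pi|b-a|\to 0$ follows directly from an area bound, bypassing your dominated convergence step.
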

\begin{proof}
Given any $0<\varepsilon<1$ we set
\begin{align}\label{cutofffunction}
\eta(x) = \left\{ \begin{aligned} & 0 &  0 \leq |x| \leq \varepsilon \\
                                    & \frac{\log \varepsilon - \log |x|}{\log \varepsilon - \log \sqrt{\varepsilon}} & \varepsilon \leq |x| \leq \sqrt{\varepsilon} \\
                                    & 1 & x \geq \sqrt{\varepsilon}. \end{aligned} \right.
\end{align}
Choosing $\varepsilon=2|b-a|$ and $\eta_a(x)=\eta(x-a)$, an explicit calculation shows that the properties are satisfied.
\end{proof}

\begin{lemma}\label{lemma:interior_theta_a_theta_b}
Given $a,b\in \Omega$ there exist $\theta_a$ and $\theta_b$ such that  $\theta_a-\theta_b \in C^\infty(\Omega\setminus\{ta+(1-t)b,\, t\in[0,1]\})$ and moreover in this set we have
\[
\alpha\nabla(\theta_a-\theta_b)=A_a-A_b.
\]
\end{lemma}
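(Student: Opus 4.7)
The plan is to realize $\theta_a$ and $\theta_b$ as explicit single-valued branches of the polar angle around their respective poles, with coordinated branch cuts chosen so that the multi-valuedness cancels off the segment $[a,b]$. The underlying topological fact is that, although each $A_c$ has circulation $2\pi\alpha$ around its own pole $c$, the difference $A_a-A_b$ has vanishing circulation around any loop enclosing both $a$ and $b$, suggesting that $A_a-A_b$ becomes exact on $\Omega\setminus[a,b]$.

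First, I would choose a simple smooth arc $\gamma\subset\overline{\Omega}$ with one endpoint at $a$, passing through $b$, and terminating at a point of $\partial\Omega$, so that $\gamma\supset[a,b]$ and $\gamma\setminus[a,b]\subset\Omega\setminus\{a,b\}$. Since $\Omega$ is simply connected and $\gamma$ is an embedded arc with only one endpoint on $\partial\Omega$, the open set $\Omega\setminus\gamma$ is simply connected and contains neither $a$ nor $b$. I would then define $\theta_a$ and $\theta_b$ as continuous single-valued branches of $\arg(x-a)$ and $\arg(x-b)$ on $\Omega\setminus\gamma$. A direct computation from \eqref{eq:magnetic_potential_definition} gives $\alpha\nabla\theta_a=A_a$ and $\alpha\nabla\theta_b=A_b$, so $\alpha\nabla(\theta_a-\theta_b)=A_a-A_b$ throughout $\Omega\setminus\gamma$.

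Next, I would show that $\theta_a-\theta_b$ extends as a $C^\infty$ function across $\gamma\setminus[a,b]$, so that the identity holds on all of $\Omega\setminus[a,b]$. Fix $x_0\in\gamma\setminus[a,b]$ and compare the boundary values of $\theta_a$ and $\theta_b$ on the two sides of $\gamma$ near $x_0$: these differ by integrals of $\nabla\theta_a$ and $\nabla\theta_b$ along any path in $\Omega\setminus\gamma$ joining the two sides, and such a path must loop around the unique free endpoint $a$ of $\gamma$. Because $\gamma$ also contains $[a,b]$, this loop winds once around both $a$ and $b$ with the same orientation; hence the jumps of $\theta_a$ and $\theta_b$ across $\gamma$ at $x_0$ both equal the same $\pm 2\pi$, so the jump of their difference is zero. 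The continuous extension then inherits $C^\infty$ regularity from the identity $\alpha\nabla(\theta_a-\theta_b)=A_a-A_b$ combined with $A_a-A_b\in C^\infty(\Omega\setminus\{a,b\})$.

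I expect the main subtlety to be the topological placement of the branch cut: one must arrange $\gamma$ so that $b$ lies interior to $\gamma$, between $a$ and $\partial\Omega$, precisely so that loops around the free endpoint of the cut automatically encircle both poles. Across $[a,b]$ itself, by contrast, an analogous loop encloses only one of the two poles and the jumps do not cancel, which is exactly the obstruction preventing extension across $[a,b]$ and reflects the fact that the cohomology class of $A_a-A_b$ on $\Omega\setminus\{a,b\}$, while trivial on $\Omega\setminus[a,b]$, is nontrivial on strictly larger open subsets.
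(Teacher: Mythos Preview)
Your argument is correct and rests on the same geometric idea as the paper's proof: place the branch cuts of $\theta_a$ and $\theta_b$ along a common arc through both poles, so that beyond the segment $[a,b]$ the $2\pi$-jumps cancel. The paper carries this out by writing explicit piecewise-$\arctan$ formulas for $\theta_a$ and $\theta_b$ with branch cuts along the half-line from $a$ through $b$ (after reducing to $a_1<b_1$), and then asserts that the difference is regular off $[a,b]$. You instead extend the cut only as far as $\partial\Omega$, invoke simple connectedness of $\Omega\setminus\gamma$ to produce the branches abstractly, and give a winding-number argument for why the jump of $\theta_a-\theta_b$ vanishes across $\gamma\setminus[a,b]$. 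Your route explains more transparently \emph{why} the construction works and avoids the case distinction on the relative position of $a$ and $b$; the paper's route is entirely explicit and requires no topological input. One minor point: your construction tacitly assumes the straight segment $[a,b]$ lies in $\Omega$ so that it can be the initial piece of $\gamma$; this is harmless in the applications (where $a\to b$), and the paper's proof makes the same implicit assumption.
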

\begin{proof}
Let $a=(a_1,a_2)$ and $b=(b_1,b_2)$. Suppose that $a_1<b_1$, the other cases can be treated in a similar way. We shall provide a suitable branch of the polar angle centered at $a$, which is discontinuous on the half-line starting at $a$ and passing through $b$. To this aim we consider the following branch of the arctangent
\[
\arctan:\R\to \left(-\frac{\pi}{2},\frac{\pi}{2}\right).
\]
We set
\[
\theta_{a} = \left\{ \begin{aligned} & \arctan \frac{x_{2} - a_{2}}{x_{1} - a_{1}} \quad & x_{1} > a_{1}, \, x_{2} \geq \frac{b_{2}-a_{2}}{b_{1}-a_{1}}x_{1} + \frac{a_{2}b_{1} - b_{2}a_{1}}{b_{1}-a_{1}} \\
                                     & \pi/2 \quad & x_{1} = a_{1}, \, x_{2} > a_{2} \\
                                     & \pi + \arctan \frac{x_{2} - a_{2}}{x_{1} - a_{1}} \quad & x_{1} < a_{1} \\
                                     & 3\pi/2 \quad & x_{1} = a_{1}, \, x_{2} < a_{2} \\
                                     & 2 \pi + \arctan \frac{x_{2} - a_{2}}{x_{1} - a_{1}} \quad & x_{1} > a_{1}, \, x_{2} < \frac{b_{2}-a_{2}}{b_{1}-a_{1}}x_{1} + \frac{a_{2}b_{1} - b_{2}a_{1}}{b_{1}-a_{1}}. \end{aligned} \right.
\]
With this definition $\theta_{a}$ is regular except on the half-line
\[
x_{2} = \frac{b_{2}-a_{2}}{b_{1}-a_{1}}x_{1} + \frac{a_{2}b_{1} - b_{2}a_{1}}{b_{1}-a_{1}}, \quad
x_1>a_1
\]
and an explicit calculation shows that $\alpha\nabla\theta_a=A_a$ in the set where it is regular. The definition of $\theta_b$ is analogous: we keep the same half-line, whereas we replace $(a_1,a_2)$ with $(b_1,b_2)$ in the definition of the function. One can verify that $\theta_a-\theta_b$ is regular except for the segment from $a$ to $b$.
\end{proof}

Recall that in the following $\varphi_j^a$ is an eigenfunction associated to $\lambda_j^a$, normalized in the $L^2$-norm. Moreover, we can assume that the eigenfunctions are orthogonal.

\begin{lemma}\label{lemma:interior_tilde_varphi_j_definition}
Given $a,b\in \Omega$, let $\eta_a$ be defined as in Lemma \ref{lemma:cut_off_function} and let $\theta_a,\theta_b$ be defined as in Lemma \ref{lemma:interior_theta_a_theta_b}. Fix an integer $k\geq1$ and set, for $j=1,\ldots,k$,
\[
\tilde{\varphi}_{j} = e^{i\alpha(\theta_{a}-\theta_{b})} \eta_{a} \varphi_{j}^{b}.
\]
Then $\tilde{\varphi}_{j} \in H^{1}_{A_{a}}(\Omega)$ and moreover for every $(\alpha_1,\ldots,\alpha_k)\in\R^k$ it holds
\[
(1 - \varepsilon_{a}) \|\sum_{j = 1}^{k} \alpha_{j} \varphi_{j}^b\|^{2}_{L^{2}(\Omega)} \leq 
\|\sum_{j = 1}^{k} \alpha_{j} \tilde{\varphi}_{j} \|_{L^{2}(\Omega)}^{2} \leq 
k \|\sum_{j = 1}^{k} \alpha_{j} \varphi_{j}^b\|^{2}_{L^{2}(\Omega)},
\]
where $\varepsilon_a\to 0$ as $a\to b$.
\end{lemma}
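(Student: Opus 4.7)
The plan is to check membership in $H^1_{A_a}(\Omega)$ first, and then derive both inequalities from the pointwise identity $|\tilde\varphi_j|^2=\eta_a^2|\varphi_j^b|^2$ combined with the smallness of $1-\eta_a^2$ provided by Lemma \ref{lemma:cut_off_function}.

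For the regularity claim, I would observe that $\eta_a$ vanishes on $\{|x-a|<2|b-a|\}$, which contains the entire segment $[a,b]$; by Lemma \ref{lemma:interior_theta_a_theta_b} the phase $\theta_a-\theta_b$ is smooth off that segment, so $e^{i\alpha(\theta_a-\theta_b)}\eta_a$ extends to a smooth function on $\Omega$. Using the gauge identity $\alpha\nabla(\theta_a-\theta_b)=A_a-A_b$, a direct computation yields
\[
(i\nabla+A_a)\tilde\varphi_j = e^{i\alpha(\theta_a-\theta_b)}\bigl[i(\nabla\eta_a)\varphi_j^b + \eta_a(i\nabla+A_b)\varphi_j^b\bigr],
\]
which lies in $L^2(\Omega)$ since $\varphi_j^b\in H^1_{A_b}(\Omega)$ and $\eta_a,\nabla\eta_a$ are bounded.

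For the upper bound, since $|e^{i\alpha(\theta_a-\theta_b)}|=1$ and $0\le\eta_a\le 1$, one has pointwise
\[
\Bigl|\sum_{j=1}^k\alpha_j\tilde\varphi_j\Bigr|^2 = \eta_a^2\Bigl|\sum_{j=1}^k\alpha_j\varphi_j^b\Bigr|^2 \le \Bigl|\sum_{j=1}^k\alpha_j\varphi_j^b\Bigr|^2,
\]
and integration gives the stated upper bound (in fact with constant $1\le k$). For the lower bound, the same identity yields
\[
\Bigl\|\sum_{j=1}^k\alpha_j\tilde\varphi_j\Bigr\|_{L^2(\Omega)}^2 = \Bigl\|\sum_{j=1}^k\alpha_j\varphi_j^b\Bigr\|_{L^2(\Omega)}^2 - \int_\Omega(1-\eta_a^2)\Bigl|\sum_{j=1}^k\alpha_j\varphi_j^b\Bigr|^2\,dx,
\]
so it suffices to bound the correction by $\varepsilon_a\|\sum_j\alpha_j\varphi_j^b\|_{L^2(\Omega)}^2$ with $\varepsilon_a\to 0$. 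By Cauchy-Schwarz and the $L^\infty$-boundedness of each $\varphi_j^b$ (a consequence of the Hölder regularity of Lemma \ref{lemma:regularity_eigenfunctions}), $|\sum_j\alpha_j\varphi_j^b|^2\le kM^2\sum_j\alpha_j^2$ with $M=\max_j\|\varphi_j^b\|_\infty$, and orthonormality gives $\sum_j\alpha_j^2=\|\sum_j\alpha_j\varphi_j^b\|_{L^2(\Omega)}^2$. The correction is then bounded by $kM^2\|\sum_j\alpha_j\varphi_j^b\|_{L^2(\Omega)}^2\int_\Omega(1-\eta_a^2)\,dx$, and the last factor tends to $0$ as $a\to b$ by Lemma \ref{lemma:cut_off_function}.

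The main conceptual step is identifying the magnetic derivative at the pole $a$ with the one at $b$ through the gauge factor $e^{i\alpha(\theta_a-\theta_b)}$; this is precisely why the cutoff has to annihilate the whole segment $[a,b]$. Once this is in place, the two inequalities reduce to routine pointwise and measure estimates.
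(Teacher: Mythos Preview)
Your proof is correct and follows the same strategy as the paper: the cutoff annihilates a disk containing the segment $[a,b]$, so the phase is smooth on $\mathrm{supp}(\eta_a)$ and membership in $H^1_{A_a}(\Omega)$ follows; the lower bound then comes from the $L^\infty$ control on the $\varphi_j^b$ (Lemma~\ref{lemma:regularity_eigenfunctions}) together with $\int_\Omega(1-\eta_a^2)\,dx\to 0$. Your upper bound is actually cleaner than the paper's: by factoring $\sum_j\alpha_j\tilde\varphi_j = e^{i\alpha(\theta_a-\theta_b)}\eta_a\sum_j\alpha_j\varphi_j^b$ pointwise you obtain constant $1$ directly, whereas the paper separates the sum term by term via Cauchy--Schwarz and picks up the factor $k$.
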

\begin{proof}
Let us prove first that $\tilde{\varphi}_{j} \in H^{1}_{A_{a}}(\Omega)$. By Lemmas \ref{lemma:cut_off_function} and \ref{lemma:interior_theta_a_theta_b} we have that $\theta_a-\theta_b \in C^\infty(\text{supp}\{\eta_a\})$, so that $\tilde{\varphi}_{j} \in H^1_0(\Omega)$. Moreover $\tilde{\varphi}_{j}(x)=0$ if $|x-a|<2|b-a|$, hence $\tilde{\varphi}_{j}/|x-a| \in L^2(\Omega)$. Concerning the inequalities, we compute on one hand
\[
\|\sum_{j = 1}^{k} \alpha_{j} \tilde{\varphi}_{j} \|_{L^{2}(\Omega)}^{2} \leq 
k \sum_{j=1}^k \alpha_j^2 \|\eta_a\varphi_j^b\|_{L^2(\Omega)}^2 
\leq k \sum_{j=1}^k \alpha_j^2 = k \|\sum_{j = 1}^{k} \alpha_{j} \varphi_{j}^b\|^{2}_{L^{2}(\Omega)},
\]
where we used the inequality $\sum_{i,j=1}^{k}\alpha_{i}\alpha_{j} \leq k \sum_{j=1}^{k}\alpha^{2}_{j}$ and the fact that the eigenfunctions are orthogonal and normalized in the $L^2(\Omega)$-norm. On the other hand we compute
\[
\|\sum_{j = 1}^{k} \alpha_{j} \varphi_{j}^b\|^{2}_{L^{2}(\Omega)} - 
\|\sum_{j = 1}^{k} \alpha_{j} \tilde{\varphi}_{j} \|_{L^{2}(\Omega)}^{2} =
\sum_{i,j=1}^k \alpha_i\alpha_j \int_\Omega (1-\eta_a^2)\varphi_i^b \bar{\varphi}_{j}^{b} \,dx.
\]
Thanks to the regularity result proved by Felli, Ferrero and Terracini (see Lemma \ref{lemma:regularity_eigenfunctions}), we have that $\varphi_i^b$ are bounded in $L^\infty(\Omega)$. Therefore the last quantity is bounded by
\[
C k \sum_{j=1}^k \alpha_j^2 \int_\Omega (1-\eta_a^2)\, dx =
C k \|\sum_{j = 1}^{k} \alpha_{j} \varphi_{j}^b\|^{2}_{L^{2}(\Omega)} \int_\Omega (1-\eta_a^2)\, dx
\]
and the conclusion follows from Lemma \ref{lemma:cut_off_function}.
\end{proof}

We have all the tools to prove the first part of Theorem \ref{theorem:continuity}. We will use some ideas from \cite[Theorem 7.1]{HHT2010dim3}.

\begin{theorem}\label{theorem:continuityinterior}
For every $k\in \N\setminus\{0\}$ the function $a\in\Omega \mapsto \lambda_k^a \in \R$ is continuous.
\end{theorem}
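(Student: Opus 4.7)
The plan is to establish continuity at an arbitrary $b \in \Omega$ via upper and lower semicontinuity using the min-max characterization \eqref{eigenvalueofmagneticoperator}, with the three preparatory lemmas providing the building blocks. The gauge transformation trick from Lemma \ref{lemma:interior_theta_a_theta_b} is the essential ingredient: the identity $\alpha\nabla(\theta_a-\theta_b)=A_a-A_b$ gives, by a direct computation, the commutation rule
\[
(i\nabla + A_a)\bigl(e^{i\alpha(\theta_a-\theta_b)} u\bigr) = e^{i\alpha(\theta_a-\theta_b)} (i\nabla + A_b) u,
\]
valid away from the segment from $a$ to $b$. Combined with the cut-off $\eta_a$ from Lemma \ref{lemma:cut_off_function}, which kills the singularity of $\theta_a-\theta_b$, this lets us move eigenfunctions back and forth between the magnetic spaces at $a$ and at $b$ while controlling the magnetic Dirichlet energies.

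For upper semicontinuity, I would fix $k$ and plug into the min-max the $k$-dimensional subspace $W_k^a = \mathrm{span}\{\tilde{\varphi}_1,\dots,\tilde{\varphi}_k\} \subset H^1_{A_a}(\Omega)$ with $\tilde{\varphi}_j$ as in Lemma \ref{lemma:interior_tilde_varphi_j_definition}. For $\Phi = \sum \alpha_j \tilde{\varphi}_j$, the commutation identity yields
\[
\|(i\nabla+A_a)\Phi\|_{L^2(\Omega)}^2 = \Bigl\| (i\nabla + A_b)\Bigl(\eta_a \sum_j \alpha_j \varphi_j^b\Bigr)\Bigr\|_{L^2(\Omega)}^2.
\]
Expanding the right-hand side produces the main term $\|\eta_a(i\nabla+A_b)\sum \alpha_j\varphi_j^b\|^2 \leq \lambda_k^b \sum \alpha_j^2$ plus cross terms involving $\nabla\eta_a$. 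Using $\|\varphi_j^b\|_{L^\infty}<\infty$ (Lemma \ref{lemma:regularity_eigenfunctions}) together with $\int |\nabla\eta_a|^2\,dx \to 0$ from Lemma \ref{lemma:cut_off_function}, all cross terms are $o(1)$. The denominator is handled by Lemma \ref{lemma:interior_tilde_varphi_j_definition}, giving $(1-\varepsilon_a)\sum\alpha_j^2$ from below. Taking sup over $\Phi\in W_k^a$ and inf in the min-max yields $\lambda_k^a \leq \lambda_k^b + o(1)$, and in particular proves a posteriori that $\dim W_k^a = k$ for $a$ close enough to $b$.

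For lower semicontinuity, I would run a symmetric argument with the roles of $a$ and $b$ interchanged. Given a sequence $a_n \to b$, the already-proved upper bound shows $\lambda_j^{a_n}$ is bounded for $j\leq k$, and hence by the embedding \eqref{eq:H_1_A_a_characterization} the eigenfunctions $\varphi_j^{a_n}$ are bounded in $H^1_0(\Omega)$, so by Sobolev embedding bounded in every $L^p(\Omega)$ with $p<\infty$, uniformly in $n$. Then I would build test functions in $H^1_{A_b}(\Omega)$ of the form $\tilde\psi_j^n = e^{-i\alpha(\theta_{a_n}-\theta_b)}\eta_{b,n}\varphi_j^{a_n}$, where $\eta_{b,n}$ is a cut-off vanishing near $b$ on a ball of radius $2|a_n-b|\to 0$ chosen analogously to Lemma \ref{lemma:cut_off_function}. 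The same commutation identity reduces the numerator to $\lambda_j^{a_n}$ plus a $\nabla\eta_{b,n}$-remainder that vanishes thanks to the uniform $L^p$ bound on $\varphi_j^{a_n}$; for the denominator, orthonormality of the $\varphi_j^{a_n}$'s combined with the fact that $\int(1-\eta_{b,n}^2)|\varphi_j^{a_n}|^2\,dx\to 0$ (Hölder's inequality plus the uniform $L^p$ bound and the shrinking cut-off support) gives a lower bound $1-o(1)$ and preserves approximate orthogonality, so these vectors span a $k$-dimensional subspace for $n$ large. Inserting into \eqref{eigenvalueofmagneticoperator} produces $\lambda_k^b \leq \lambda_k^{a_n} + o(1)$.

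The main obstacle is the lower-bound direction: one must ensure that the transported eigenfunctions $\tilde\psi_j^n$ genuinely live in $H^1_{A_b}(\Omega)$ (for which the cut-off $\eta_{b,n}$ near $b$ is crucial, together with $\theta_{a_n}-\theta_b$ being smooth away from the segment $\overline{a_n b}$), and that the error terms coming from $\nabla\eta_{b,n}$ are controlled \emph{uniformly in $n$}. The uniform $L^p$ bound on $\varphi_j^{a_n}$ derived from the a priori upper semicontinuity is what makes this step work; without first proving upper semicontinuity, one could not even start the symmetric argument.
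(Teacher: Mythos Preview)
Your upper-semicontinuity argument matches the paper's Step~1. For the lower bound you diverge from the paper: rather than a symmetric cut-off construction, the paper extracts weak $H^1_0$-limits $\tilde\varphi_j$ of $\varphi_j^{a}$, shows via Fatou and the Hardy inequality that $\tilde\varphi_j\in H^1_{A_b}(\Omega)$, passes to the limit in the equation tested against $\phi\in C^\infty_0(\Omega\setminus\{b\})$ (using $A_a\to A_b$ in $C^\infty$ on $\mathrm{supp}\,\phi$), and concludes that the $\tilde\varphi_j$ are $L^2$-orthonormal eigenfunctions of $(i\nabla+A_b)^2$ with eigenvalues $\lambda_j^\infty:=\liminf\lambda_j^a$, whence $\lambda_k^b\le\lambda_k^\infty$ by min--max.

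Your symmetric approach is viable, but the justification you give for the $\nabla\eta_{b,n}$-remainder has a gap. The dangerous term is $\int_\Omega|\nabla\eta_{b,n}|^2\,|\varphi_j^{a_n}|^2\,dx$. With the logarithmic cut-off of Lemma~\ref{lemma:cut_off_function} one has $\|\nabla\eta_{b,n}\|_{L^2}\to0$ but $\|\nabla\eta_{b,n}\|_{L^{2s}}\to\infty$ for every $s>1$ (indeed $\int_\varepsilon^{\sqrt\varepsilon}r^{1-2s}\,dr\sim\varepsilon^{2-2s}$), so a H\"older splitting against a uniform $L^p$ bound on $\varphi_j^{a_n}$ with $p<\infty$ cannot close the estimate; you would need $L^\infty$, which the two-dimensional Sobolev embedding does not provide. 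The repair is to use Hardy at the pole $a_n$ instead: on the annulus $\{\varepsilon\le|x-b|\le\sqrt\varepsilon\}$ with $\varepsilon=2|a_n-b|$ one has $|x-b|\le 2|x-a_n|$, hence
\[
\int_\Omega|\nabla\eta_{b,n}|^2|\varphi_j^{a_n}|^2\,dx\le\frac{C}{|\log\varepsilon|^2}\int_\Omega\frac{|\varphi_j^{a_n}|^2}{|x-a_n|^2}\,dx\le\frac{C}{|\log\varepsilon|^2}\,\|\varphi_j^{a_n}\|_{H^1_{A_{a_n}}(\Omega)}^2\to0
\]
by \eqref{eq:hardy} and the already-proved upper bound on $\lambda_j^{a_n}$. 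With this correction your argument goes through; the paper's weak-limit route avoids the issue altogether.
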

\begin{proof}
We divide the proof in two steps.

{\bf Step 1}
First we prove that
\[
\limsup_{a \to b} \lambda_{k}^{a} \leq \lambda_{k}^{b}.
\]
To this aim it will be sufficient to exhibit a $k$-dimensional space $E_{k} \subset H^{1}_{A_{a}}(\Omega)$ with the property that
\begin{align} \label{eq:interior_step1_E_k}
\| \Phi \|_{H^{1}_{A_{a}}(\Omega)}^2 \leq \left(\lambda_{k}^{b} + \varepsilon'_{a} \right) \|\Phi\|^{2}_{L^{2}(\Omega)}
\quad \text{ for every } \Phi\in E_k,
\end{align}
with $\varepsilon'_a\to0$ as $a\to b$. Let span$\{\varphi_1^b,\ldots,\varphi_k^b\}$ be any spectral space attached to $\lambda_1^b,\ldots,\lambda_k^b$. Then we define
\[
E_k:=\text{span}\{\tilde{\varphi}_1,\ldots, \tilde{\varphi}_k\} \quad\text{ with }\quad
\tilde{\varphi}_{j} = e^{i\alpha(\theta_{a}-\theta_{b})} \eta_{a} \varphi_{j}^{b}.
\]
We know from Lemma \ref{lemma:interior_tilde_varphi_j_definition} that $E_k \subset H^{1}_{A_{a}}(\Omega)$. Moreover, it is immediate to see that dim$E_k=k$.
Let us now verify (\ref{eq:interior_step1_E_k}) with $\Phi = \sum_{j=1}^{k} \alpha_{j} \tilde{\varphi}_{j}$, $\alpha_j\in\R$.
We compute
\begin{align}\label{eq:interior_step1_estimate1}
\| \Phi \|_{H^{1}_{A_{a}}(\Omega)}^2  & = \int_{\Omega}| \sum_{j=1}^{k} \alpha_{j} (i \nabla + A_{b}) (\eta_{a}\varphi_{j}^{b}) |^{2} \, \, dx \notag \\
& = \int_{\Omega} \sum_{i,j=1}^{k} \alpha_{i} \alpha_{j} (i \nabla + A_{b})^{2} (\eta_{a}\varphi^{b}_{i} ) (\eta_{a}\bar{\varphi}_{j}^{b}) \, dx,
\end{align}
where we have used the equality
\[
\left(i \nabla + A_{a} \right) \tilde{\varphi}_{j} = e^{i\alpha(\theta_{a}- \theta_{b})} \left(i \nabla + A_{b}\right)(\eta_{a} \varphi_{j}^{b})
\]
and integration by parts. Next notice that
\[
(i \nabla + A_{b}) (\eta_{a}\varphi_{i}^{b}) = (i \nabla + A_{b})\varphi_{i}^{b} \eta_{a} + i \varphi_{i}^{b} \nabla \eta_{a},
\]
so that
\[
(i \nabla + A_{b})^{2} (\eta_{a}\varphi_{i}^{b}) = (i \nabla + A_{b})^{2}\varphi_{i}^{b} \eta_{a} + 2i (i \nabla + A_{b}) \varphi_{i}^{b} \cdot \nabla \eta_{a} - \varphi^{b}_{i} \Delta \eta_{a}.
\]
By replacing in \eqref{eq:interior_step1_estimate1}, we obtain
\begin{align}\label{eq:interior_step1_estimate2}
\| \Phi \|_{H^{1}_{A_{a}}(\Omega)}^2  & = \int_{\Omega} \sum_{i,j=1}^{k} \alpha_i\alpha_j \left( \lambda_i^b \varphi_{i}^{b} \eta_{a} + 2i (i \nabla + A_{b}) \varphi_{i}^{b} \cdot \nabla \eta_{a} - \varphi^{b}_{i} \Delta \eta_{a} \right) \bar{\varphi}^{b}_{j} \eta_{a} \, dx \notag \\
& \leq \lambda_{k}^{b} \|\sum_{j=1}^{k} \alpha_{j} \varphi_{j}^{b}\|^2_{L^{2}(\Omega)} + \beta_{a}
\end{align}
where 
\begin{align}\label{eq:beta_a_definition} 
\beta_{a}  = \int_{\Omega} \sum_{i,j=1}^{k} \alpha_i\alpha_j & \left\{ \lambda_{i}^{b} (\eta_{a}^{2} -1) \varphi_{i}^{b} \bar{\varphi}_{j}^{b} + 2i (i \nabla + A_{b})\varphi_{i}^{b} \cdot \nabla \eta_{a} \bar{\varphi}_{j}^{b} \eta_{a} \right.\notag\\
&- \left.\varphi_{i}^{b} \bar{\varphi}_{j}^{b} \Delta \eta_{a} \eta_{a} \right\} \, dx.
\end{align}
We need to estimate $\beta_a$. From Lemma \ref{lemma:regularity_eigenfunctions} we deduce the existence of a constant $C>0$ such that $\|\varphi_{j}^{b}\|_{L^\infty(\Omega)} \leq C$ for every $j=1,\ldots,k$. Hence
\[
\left|\int_{\Omega} \sum_{i,j=1}^{k} \alpha_i\alpha_j \lambda_{i}^{b}(\eta_{a}^{2} -1) \varphi_{i}^{b} \bar{\varphi}_{j}^{b} \, dx\right| \leq C \sum_{j=1}^k \alpha_j^2 \int_\Omega (1 - \eta_a^2)\, dx.
\]
Using the fact that $\|\varphi_{j}^{b}\|^2_{H^1_0(\Omega)}\leq C\|\varphi_{j}^{b}\|^2_{H^1_{A_a}(\Omega)}=C\lambda_j^b$ (see equation \eqref{eq:H_1_A_a_characterization}), we have
\[
\left|\int_{\Omega} \sum_{i,j=1}^{k} \alpha_i\alpha_j \nabla\varphi_{i}^{b}\cdot\nabla\eta_a \bar{\varphi}_{j}^{b} \eta_a \, dx\right| \leq C \sum_{j=1}^k \alpha_j^2 \left(\int_\Omega |\nabla\eta_a|^2 \, dx\right)^{1/2}.
\]
Next we apply the Hardy inequality \eqref{eq:hardy} to obtain
\[
\begin{split}
& \left|\int_{\Omega} \sum_{i,j=1}^{k} \alpha_i\alpha_j \varphi_{i}^{b} \bar{\varphi}_{j}^{b} A_b\cdot\nabla\eta_a  \eta_a \, dx \right| 
\leq C \sum_{j=1}^k \alpha_j^2 \int_\Omega |\varphi_j^bA_b\cdot\nabla\eta_a|\,dx \\
& \leq C \sum_{j=1}^k \alpha_j^2 \left\|\frac{\varphi_j^b}{x-b}\right\|_{L^2(\Omega)} 
\|(x-b)A_b\|_{L^\infty(\Omega)} \|\nabla\eta_a\|_{L^2(\Omega)} \\
&\leq C \sum_{j=1}^k \alpha_j^2 \|\nabla\eta_a\|_{L^2(\Omega)}.
\end{split}
\]
Concerning the last term in \eqref{eq:beta_a_definition}, similar estimates give
\begin{align*}
&\left|\int_{\Omega} \sum_{i,j=1}^{k} \alpha_i\alpha_j \varphi_{i}^{b} \bar{\varphi}_{j}^{b} \Delta \eta_a  \eta_a \, dx\right| \\
&= \left|\int_{\Omega} \sum_{i,j=1}^{k} \alpha_i\alpha_j \left(|\nabla\eta_a|^2 \varphi_{i}^{b} \bar{\varphi}_{j}^{b} +\eta_a  \nabla \eta_a\cdot \nabla(\varphi_{i}^{b} \bar{\varphi}_{j}^{b})\right) \, dx\right|\\
&\leq C \sum_{j=1}^k \alpha_j^2 \left(\int_\Omega |\nabla\eta_a|^2 \, dx\right)^{1/2}.
\end{align*}
In conclusion we have obtained
\begin{align*}
|\beta_a| & \leq C \|\sum_{j=1}^{k} \alpha_{j} \varphi_{j}^{b}\|^2_{L^{2}(\Omega)} \left\{ \int_{\Omega} (1 - \eta_{a}^{2}) \, dx  + \left(\int_{\Omega} |\nabla \eta_{a}|^{2}  \, dx \right)^{1/2}\right\} \\
& = \|\sum_{j=1}^{k} \alpha_{j} \varphi_{j}^{b}\|^2_{L^{2}(\Omega)} \varepsilon''_a,
\end{align*}
with $\varepsilon''_a\to0$ as $a\to b$ by Lemma \ref{lemma:cut_off_function}.
By inserting the last estimate into \eqref{eq:interior_step1_estimate2} and then using Lemma \ref{lemma:interior_tilde_varphi_j_definition} we obtain \eqref{eq:interior_step1_E_k} with $\varepsilon_a'=(\varepsilon_a''+\lambda_k^b\varepsilon_a)/(1-\varepsilon_a)$.

{\bf Step 2}
We want now to prove the second inequality
\[
\liminf_{a \to b} \lambda_{k}^{a} \geq \lambda_{k}^{b}.
\]
From relation \eqref{eq:H_1_A_a_characterization} and Step 1 we deduce
\[
\|\varphi_j^a\|^2_{H^1_0(\Omega)}\leq C \| \varphi_j^a \|^2_{H^{1}_{A_{a}}(\Omega)} \leq C \lambda_j^b.
\]
Hence there exists $\tilde{\varphi}_{j}\in H^1_0(\Omega)$ such that (up to subsequences) $\varphi_{j}^{a}\rightharpoonup \tilde{\varphi}_{j}$ weakly in $H^{1}_{0}(\Omega)$ and $\varphi_{j}^{a}\to \tilde{\varphi}_{j}$ strongly in $L^{2}(\Omega)$, as $a\to b$. In particular we have
\begin{equation}\label{eq:interior_orthogonality_L2}
\int_\Omega |\tilde{\varphi}_{j}|^2 \, dx=1 \quad\text{ and }\quad
\int_\Omega \tilde{\varphi}_{i}\tilde{\varphi}_{j}\, dx=0
\text{ if } i\neq j.
\end{equation}
Moreover, Fatou's lemma, relation \eqref{eq:hardy} and Step 1 provide
\[ 
\begin{split}
\|\tilde{\varphi}_{j}/|x-b|\|_{L^2(\Omega)} \leq \liminf_{a\to b} \|\varphi^{a}_{j}/|x-a|\|_{L^2(\Omega)} \leq C\liminf_{a\to b} \|\varphi_j^a\|_{H^1_{A_a}(\Omega)} \\
= C\liminf_{a\to b} \sqrt{\lambda_j^a} \leq C\sqrt{\lambda_j^b},
\end{split}
\]
so we deduce that $\tilde{\varphi}_{j} \in H^{1}_{A_{b}}(\Omega)$.

Given a test function $\phi \in C^{\infty}_{0}(\Omega \backslash \{b\})$, consider $a$ sufficiently close to $b$ so that $a\not\in \text{supp}\{\phi\}$. We have that
\[ 
\begin{split}
& \int_{\Omega} \lambda_{j}^{a} \varphi_{j}^{a} \bar{\phi} \, dx   
=\int_{\Omega} \varphi_{j}^{a} \overline{(i \nabla + A_{a})^{2}\phi} \, dx \\
&= \int_{\Omega} \left\{ -\Delta\varphi_j^a\bar\phi+\varphi_j^a[\overline{i\nabla\cdot A_a\phi+2iA_a\cdot\nabla\phi+|A_a|^2\phi}] \right\}\,dx \\
&= \int_{\Omega} \left\{(i\nabla+A_b)^2\varphi_j^a\bar{\phi}-i\nabla\cdot(A_a+A_b)\varphi_j^a\bar{\phi}
-2i(A_a\cdot\nabla\bar{\phi}\varphi_j^a+A_b\cdot\nabla\varphi_j^a\bar{\phi}) \right.\\ & \left.
 +(|A_a|^2-|A_b|^2)\varphi_j^a\bar{\phi}\right\}\,dx 
=\int_{\Omega} \left\{ (i \nabla + A_{b})^{2}\varphi_{j}^{a}\bar{\phi} - i\nabla\cdot(A_a-A_b) \varphi_{j}^{a} \bar{\phi}  \right.\\ & \left.
-2i  \varphi_{j}^{a} (A_a-A_b)\cdot\nabla\bar{\phi} + (|A_a|^2-|A_b|^2) \varphi_{j}^{a} \bar{\phi}  \right\}\, dx,
\end{split}
\]
where in the last step we used the identity
\[
-2i\int_\Omega A_b\cdot\nabla\varphi_j^a\bar{\phi}\,dx=
2i\int_\Omega(\nabla\cdot A_b \varphi_j^a\bar{\phi}+A_b\varphi_j^a\nabla\bar{\phi})\,dx.
\]
Since $a,b\not\in \text{supp}\{\phi\}$ then $A_a\to A_b$ in $C^\infty(\text{supp}\{\phi\})$. Hence for a suitable subsequence we can pass to the limit in the previous expression obtaining
\[
\int_{\Omega} (i \nabla + A_{b})^{2} \tilde{\varphi}_{j} \bar{\phi} = \int_{\Omega} \lambda_{j}^{\infty} \tilde{\varphi}_{j} \bar{\phi} \quad \text{ for every } \phi \in C^{\infty}_{0}(\Omega \backslash \{b\})
\]
where $\lambda_{j}^{\infty}:=\liminf_{a \to b} \lambda_{j}^{a}$. By density, the same is valid for $\phi\in H^1_{A_b}(\Omega)$. As a consequence of the last equation and of \eqref{eq:interior_orthogonality_L2}, the functions $\tilde{\varphi}_{j}$ are orthogonal in $H^{1}_{A_{b}}(\Omega)$ and hence
\begin{align*}
\lambda_{k}^{b} & = \inf_{\substack{W_k\subset H^{1}_{A_{b}}(\Omega)\\ \dim W_{k} = k}} \sup_{\Phi \in W_{k}} \frac{\int_{\Omega} |(i \nabla + A_{b}) \Phi|^{2}}{\int_{\Omega} |\Phi|^{2}} \\
                & \leq \sup_{<\tilde{\varphi}_{j}>} \frac{\int_{\Omega} |(i \nabla + A_{b}) \sum_{j=1}^{k} \alpha_{j}\tilde{\varphi}_{j} |^{2}}{|\sum_{j=1}^{k} \alpha_{j}\tilde{\varphi}_{j}|^{2}} \\
                & = \sup _{<\tilde{\varphi}_{j}>} \frac{\int_{\Omega} \sum_{j=1}^{k} \alpha_{j}^{2} \lambda_j^\infty}{\sum_{j=1}^{k} \alpha_{j}^2} \\
                &\leq \lambda_k^\infty= \liminf_{a \to b} \lambda_{k}^{a} .
\end{align*}
This concludes Step 2 and the proof of the theorem.
\end{proof}

\section{Continuity of the eigenvalues with respect to the pole up to the boundary of the domain} \label{sec:continuity2}

In this section we prove the second part of Theorem \ref{theorem:continuity}, that is the continuous extension up to the boundary of the domain.
We will denote by $\varphi_j$ an eigenfunction associated to $\lambda_j$, the $j$-th eigenvalue of the Laplacian in $H^1_0(\Omega)$. As usual, we suppose that the eigenfunctions are normalized in $L^2$ and orthogonal.
The following two lemmas can be proved exactly as the corresponding ones in the previous section.

\begin{lemma}\label{lemma:boundary_theta_a_theta_b}
Given $a\in \Omega$ and $b\in\partial\Omega$ there exist $\theta_a$ and $\theta_b$ such that  $\theta_a \in C^\infty(\Omega\setminus\{ta+(1-t)b,\, t\in[0,1]\})$, $\theta_b \in C^\infty(\Omega)$ and moreover in the respective sets of regularity the following holds
\[
\alpha\nabla\theta_a=A_a \qquad \alpha\nabla\theta_b=A_b.
\]
\end{lemma}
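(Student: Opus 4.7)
The plan is to mirror the construction of Lemma \ref{lemma:interior_theta_a_theta_b}, with the crucial new ingredient that for $b\in\partial\Omega$ one can produce a branch of the polar angle based at $b$ which is smooth on all of $\Omega$. So the proof splits naturally into two independent parts.

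\textbf{Step 1 (construction of $\theta_a$).} Since $a\in\Omega$ is an interior pole, this piece is identical to the construction in Lemma \ref{lemma:interior_theta_a_theta_b}. I would pick the half-line $\ell$ issuing from $a$ in the direction of $b$, and define $\theta_a$ via the same piecewise formula (in the appropriate quadrants relative to $\ell$) as an explicit branch of the polar angle around $a$ discontinuous across $\ell$. The identity $\alpha\nabla\theta_a=A_a$ on $\Omega\setminus\ell$ is then a direct computation, as in Lemma \ref{lemma:interior_theta_a_theta_b}. The only new remark is that, because $b\in\partial\Omega$, the half-line $\ell$ leaves $\Omega$ at $b$, so $\ell\cap\Omega$ reduces to the open segment $\{ta+(1-t)b:t\in(0,1)\}$ (at least for $a$ close enough to $b$, which is the regime of interest when $a\to\partial\Omega$), giving the claimed regularity $\theta_a\in C^\infty(\Omega\setminus\{ta+(1-t)b:t\in[0,1]\})$.

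\textbf{Step 2 (construction of $\theta_b$).} This is where the hypothesis $b\in\partial\Omega$ is used in an essential way. By \eqref{eq:ab_potential}, $A_b\cdot dx$ is a closed one-form on $\mathbb{R}^2\setminus\{b\}$, and since $b\notin\Omega$ one has $\Omega\subset\mathbb{R}^2\setminus\{b\}$. Because $\Omega$ is simply connected, any closed loop in $\Omega$ is null-homotopic in $\Omega$, hence also in $\mathbb{R}^2\setminus\{b\}$, so it has winding number zero around $b$. Consequently all periods of $A_b\cdot dx$ on $\Omega$ vanish, and by the Poincaré lemma on the simply connected domain $\Omega$ there exists $\theta_b\in C^\infty(\Omega)$ with $\alpha\nabla\theta_b=A_b$. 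Concretely, one may select any simple arc starting at $b$ and contained in $\mathbb{R}^2\setminus\Omega$ and define $\theta_b$ to be the branch of the angular coordinate about $b$ discontinuous along this arc; since the arc does not meet $\Omega$, the resulting $\theta_b$ is smooth on all of $\Omega$.

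\textbf{Main obstacle.} The entire argument hinges on $b\notin\Omega$: it is precisely this that forces every loop in $\Omega$ to have zero winding number around $b$, and hence allows $\theta_b$ to be globally smooth on $\Omega$ (in contrast with the interior case of Lemma \ref{lemma:interior_theta_a_theta_b}, where the branch cut had to extend all the way from $a$ to the boundary). The mildly delicate point is to ensure, in Step~1, that the cut set for $\theta_a$ inside $\Omega$ is exactly the segment $[a,b]$ and not a larger portion of $\ell$; this is automatic for $a$ sufficiently close to $b$ since $\Omega$ lies locally on one side of $\partial\Omega$ near $b$, and for the application to the boundary limit $a\to b$ this is the only regime that matters.
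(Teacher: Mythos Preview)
Your proposal is correct and follows essentially the same approach as the paper: the paper does not give a separate proof but simply states that the lemma ``can be proved exactly as the corresponding one in the previous section,'' i.e.\ by the explicit arctan construction of Lemma~\ref{lemma:interior_theta_a_theta_b}, which is precisely your Step~1 and the concrete version of your Step~2. Your additional Poincar\'e-lemma justification for $\theta_b$ is a clean abstract repackaging of the same fact (namely that the branch cut for the angle around $b\in\partial\Omega$ can be taken entirely outside the simply connected $\Omega$), and your remark that the cut for $\theta_a$ reduces to the segment $[a,b]$ only for $a$ close to $b$ is a fair caveat that the paper leaves implicit.
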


\begin{lemma}\label{lemma:boundary_tilde_varphi_j_definition}
Given $a\in \Omega$ and $b\in\partial\Omega$, let $\eta_a$ be defined in Lemma \ref{lemma:cut_off_function} and let $\theta_a$ be defined in Lemma \ref{lemma:interior_theta_a_theta_b}. Set, for $j=1,\ldots,k$,
\[
\tilde{\varphi}_{j} = e^{i\alpha\theta_{a}} \eta_{a} \varphi_{j}.
\]
Then for every $(\alpha_1,\ldots,\alpha_k)\in\R^k$ it holds
\[
(1 - \varepsilon_{a}) \|\sum_{j = 1}^{k} \alpha_{j} \varphi_{j}\|^{2}_{L^{2}(\Omega)} \leq 
\|\sum_{j = 1}^{k} \alpha_{j} \tilde{\varphi}_{j} \|_{L^{2}(\Omega)}^{2} \leq 
k \|\sum_{j = 1}^{k} \alpha_{j} \varphi_{j}\|^{2}_{L^{2}(\Omega)},
\]
where $\varepsilon_a\to 0$ as $a\to b$.
\end{lemma}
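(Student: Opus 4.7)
The plan is to mirror the proof of Lemma \ref{lemma:interior_tilde_varphi_j_definition} almost verbatim, noting that the boundary case is actually simpler: only one polar angle $\theta_a$ appears (since the limit operator is the plain Laplacian, for which no gauge phase is needed), and the functions $\varphi_j$ are real Laplacian eigenfunctions rather than complex magnetic ones. The starting observation is that, since $|e^{i\alpha\theta_a}|\equiv 1$ pointwise, one has
\[
\Bigl|\sum_{j=1}^k \alpha_j \tilde\varphi_j\Bigr|^2 = \eta_a^2 \Bigl|\sum_{j=1}^k \alpha_j \varphi_j\Bigr|^2 \qquad\text{a.e.\ in } \Omega.
\]

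For the upper bound I would use $\eta_a\le 1$ and expand the square; the bound $\sum_{i,j}\alpha_i\alpha_j \le k\sum_j \alpha_j^2$ combined with the $L^2$-orthonormality of $\{\varphi_j\}$ yields
\[
\|\textstyle\sum_j \alpha_j \tilde\varphi_j\|_{L^2(\Omega)}^2 \le \|\textstyle\sum_j \alpha_j \varphi_j\|_{L^2(\Omega)}^2 = \sum_j\alpha_j^2 \le k \|\textstyle\sum_j\alpha_j\varphi_j\|_{L^2(\Omega)}^2,
\]
which is even a little stronger than the factor $k$ claimed.

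For the lower bound I would subtract the two $L^2$ norms and obtain
\[
\|\textstyle\sum_j\alpha_j\varphi_j\|_{L^2(\Omega)}^2 - \|\textstyle\sum_j\alpha_j\tilde\varphi_j\|_{L^2(\Omega)}^2 = \sum_{i,j}\alpha_i\alpha_j \int_\Omega (1-\eta_a^2)\,\varphi_i\varphi_j\,dx.
\]
Using the uniform bound $\|\varphi_j\|_{L^\infty(\Omega)}\le C$ for Laplacian eigenfunctions on a bounded planar domain together with $|\alpha_i\alpha_j|\le (\alpha_i^2+\alpha_j^2)/2$, I can estimate this by
\[
Ck\sum_j\alpha_j^2 \int_\Omega (1-\eta_a^2)\,dx = Ck\,\|\textstyle\sum_j\alpha_j\varphi_j\|_{L^2(\Omega)}^2 \int_\Omega (1-\eta_a^2)\,dx.
\]
Setting $\varepsilon_a := Ck \int_\Omega(1-\eta_a^2)\,dx$, Lemma \ref{lemma:cut_off_function} applied with the boundary choice $b\in\partial\Omega$ (so that $|b-a|\to 0$ as $a\to b$) gives $\varepsilon_a\to 0$, and the lower bound follows.

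I do not expect any substantive obstacle here; the boundary nature of $b$ enters only through the statement that $|b-a|\to 0$, which is built into Lemma \ref{lemma:cut_off_function}. The one point worth being careful about is the $L^\infty$ control on $\varphi_j$, which for Laplacian eigenfunctions follows from interior elliptic regularity and the standard Moser/De Giorgi $L^\infty$ bounds on bounded domains; this replaces the appeal to Lemma \ref{lemma:regularity_eigenfunctions} used in the interior case. The $H^1_{A_a}(\Omega)$-membership of $\tilde\varphi_j$, although not included in the statement, can be verified exactly as in Lemma \ref{lemma:interior_tilde_varphi_j_definition}, since $\eta_a$ vanishes in a neighborhood of $a$ so that $\tilde\varphi_j/|x-a|\in L^2(\Omega)$.
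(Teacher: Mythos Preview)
Your proposal is correct and follows essentially the same approach as the paper, which simply states that Lemma~\ref{lemma:boundary_tilde_varphi_j_definition} ``can be proved exactly as the corresponding'' interior Lemma~\ref{lemma:interior_tilde_varphi_j_definition}. Your observation that the common phase $e^{i\alpha\theta_a}$ factors out to give $\|\sum_j\alpha_j\tilde\varphi_j\|_{L^2}^2=\int_\Omega\eta_a^2|\sum_j\alpha_j\varphi_j|^2\le\|\sum_j\alpha_j\varphi_j\|_{L^2}^2$ is a clean sharpening of the upper bound (constant $1$ rather than $k$), and your remarks on the $L^\infty$ bound for Laplacian eigenfunctions and on the applicability of Lemma~\ref{lemma:cut_off_function} when $b\in\partial\Omega$ correctly address the only points where the boundary case differs from the interior one.
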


\begin{theorem}\label{theorem:continuityboundary}
Suppose that $a \in \Omega$ converges to $b\in \partial \Omega$. Then for every $k\in \N\setminus\{0\}$ we have that $\lambda_{k}^{a}$ converges to $\lambda_{k}$.
\end{theorem}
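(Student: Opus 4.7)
The plan is to follow the two-step structure of Theorem~\ref{theorem:continuityinterior}, now with the target eigenvalue being the Dirichlet Laplacian eigenvalue $\lambda_k$. The reason this is the correct target is a gauge equivalence: since $\Omega$ is simply connected and $b\in\partial\Omega$, every closed loop in $\Omega$ has winding number zero around $b$, so $A_b=\alpha\nabla\theta_b$ globally in $\Omega$ (Lemma~\ref{lemma:boundary_theta_a_theta_b}). The unitary map $u\mapsto e^{-i\alpha\theta_b}u$ then identifies $(i\nabla+A_b)^2$ on $H^1_{A_b}(\Omega)$ with $-\Delta$ on $H^1_0(\Omega)$, so the spectrum of the former is exactly $\{\lambda_j\}_{j\geq1}$.

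For the upper bound $\limsup_{a\to b}\lambda_k^a\leq\lambda_k$, I would use the test functions $\tilde\varphi_j=e^{i\alpha\theta_a}\eta_a\varphi_j$ from Lemma~\ref{lemma:boundary_tilde_varphi_j_definition}, with $\varphi_j$ the $j$-th Laplacian eigenfunction. The key observation is that the branch cut of $\theta_a$ is the segment $[a,b]$, which is contained in the disk $B(a,2|b-a|)$ where $\eta_a$ vanishes; hence each $\tilde\varphi_j$ is free of jumps and lies in $H^1_{A_a}(\Omega)$. Since $A_a=\alpha\nabla\theta_a$ off the cut, one obtains
\begin{equation*}
\Big\|(i\nabla+A_a)\sum_{j=1}^k\alpha_j\tilde\varphi_j\Big\|_{L^2(\Omega)}^2=\int_\Omega|\nabla(\eta_a u)|^2\,dx,\qquad u:=\sum_{j=1}^k\alpha_j\varphi_j.
\end{equation*}
Integration by parts, using $-\Delta\varphi_j=\lambda_j\varphi_j$ and the $L^2$-orthonormality of the $\varphi_j$, reduces the right-hand side to $\lambda_k\|u\|_{L^2(\Omega)}^2$ plus error terms bounded by $\int(1-\eta_a^2)\,dx$ and $\|\nabla\eta_a\|_{L^2(\Omega)}$, both $o(1)$ by Lemma~\ref{lemma:cut_off_function}. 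Combined with the lower bound of Lemma~\ref{lemma:boundary_tilde_varphi_j_definition} on $\|\sum_j\alpha_j\tilde\varphi_j\|_{L^2(\Omega)}$ and plugging the $k$-dimensional span of the $\tilde\varphi_j$ into \eqref{eigenvalueofmagneticoperator}, this yields $\lambda_k^a\leq\lambda_k+o(1)$.

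For the lower bound $\liminf_{a\to b}\lambda_k^a\geq\lambda_k$, Step~1 together with \eqref{eq:H_1_A_a_characterization} provides a uniform $H^1_0(\Omega)$-bound on the normalized eigenfunctions $\varphi_j^a$. Up to subsequences, $\varphi_j^a\rightharpoonup\tilde\varphi_j$ weakly in $H^1_0(\Omega)$ and strongly in $L^2(\Omega)$, and the limits $\tilde\varphi_j$ inherit the $L^2$-orthonormality. Applying Fatou to \eqref{eq:hardy} on smooth subdomains $D$ with $\overline{D}\subset\Omega$ and $b\notin\overline{D}$ shows $\tilde\varphi_j\in H^1_{A_b}(\Omega)$. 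Testing the eigenvalue equation against $\phi\in C^\infty_0(\Omega\setminus\{b\})$, observing that $a\notin\text{supp}\{\phi\}$ and $A_a\to A_b$ in $C^\infty(\text{supp}\{\phi\})$ for $a$ near $b$, and repeating the distributional manipulation carried out in the proof of Theorem~\ref{theorem:continuityinterior} yields
\begin{equation*}
\int_\Omega(i\nabla+A_b)^2\tilde\varphi_j\,\bar\phi\,dx=\lambda_j^\infty\int_\Omega\tilde\varphi_j\,\bar\phi\,dx,\qquad\lambda_j^\infty:=\liminf_{a\to b}\lambda_j^a.
\end{equation*}
A density argument extends this to all $\phi\in H^1_{A_b}(\Omega)$, so $\tilde\varphi_j$ is a weak eigenfunction of $(i\nabla+A_b)^2$ for $\lambda_j^\infty$. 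The min-max formula for $(i\nabla+A_b)^2$, whose spectrum coincides with $\{\lambda_j\}$ by the gauge identification above, then gives $\lambda_k\leq\max_{1\leq j\leq k}\lambda_j^\infty=\lambda_k^\infty$.

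The main obstacle I anticipate is the lower bound: one must check that the weak limits $\tilde\varphi_j$ genuinely belong to the magnetic form space $H^1_{A_b}(\Omega)$, which requires a Hardy-type bound uniform in $a$ near the boundary point $b$, and that $C^\infty_0(\Omega\setminus\{b\})$ is dense in $H^1_{A_b}(\Omega)$ so as to promote the weak eigenvalue equation. Both points ultimately rest on the fact that $b\in\partial\Omega$ and $\Omega$ is simply connected, which makes the limiting potential $A_b$ a pure gauge in $\Omega$ and is what distinguishes the boundary case from the interior one.
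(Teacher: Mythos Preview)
Your proposal is correct, and Step~1 coincides with the paper's argument essentially verbatim. For Step~2 the paper takes a slightly different ordering: rather than passing to the limit in the $(i\nabla+A_b)^2$--eigenvalue equation and invoking the gauge equivalence at the end, it applies the gauge $e^{-i\alpha\theta_b}$ to $\varphi_j^a$ at the outset, computes
\[
\int_\Omega \nabla(e^{-i\alpha\theta_b}\varphi_j^a)\cdot\nabla\bar\phi\,dx
=\lambda_j^a\int_\Omega e^{-i\alpha\theta_b}\varphi_j^a\,\bar\phi\,dx+\beta_a
\]
for $\phi\in C_0^\infty(\Omega)$ with $\beta_a\to0$, and thus obtains directly that the weak limits are Dirichlet Laplacian eigenfunctions in $H^1_0(\Omega)$. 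This order of operations sidesteps precisely the two auxiliary issues you flag as obstacles---the uniform Hardy bound near $b$ and the density of $C_0^\infty(\Omega\setminus\{b\})$ in $H^1_{A_b}(\Omega)$---since after gauging everything lives in the ordinary $H^1_0$ framework. That said, your own observation that $A_b$ is a global exact form on $\Omega$ already dissolves both concerns (it forces $H^1_{A_b}(\Omega)=H^1_0(\Omega)$ and $C_0^\infty(\Omega\setminus\{b\})=C_0^\infty(\Omega)$), so the two routes are really the same argument with the gauge transformation applied at different stages.
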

\begin{proof}
Following the scheme of the proof of Theorem \ref{theorem:continuityinterior} we proceed in two steps.

{\bf Step 1} First we show that
\begin{equation}\label{eq:boundary_step1_limsup}
\limsup_{a \to b} \lambda_{k}^{a} \leq \lambda_{k}.
\end{equation}
Since the proof is very similar to the one of Step 1 in Theorem \ref{theorem:continuityinterior} we will only point out the main differences. We define
\[
E_k:=\left\{\Phi=\sum_{j=1}^k\alpha_j\tilde{\varphi}_{j},\ \alpha_j\in\R\right\} \quad\text{ with }\quad
\tilde{\varphi}_{j} = e^{i\alpha\theta_{a}} \eta_{a} \varphi_{j}.
\]
We can verify the equality
\[
\left(i \nabla + A_{a} \right) ( e^{i\alpha\theta_{a}} \eta_{a} \varphi_{j} ) = 
i e^{i\alpha\theta_{a}} \nabla ( \eta_{a} \varphi_{j}),
\]
so that we have
\begin{align*}
\| \Phi \|_{H^{1}_{A_{a}}(\Omega)}^2  = \int_{\Omega} |\sum_{j=1}^{k} \alpha_{j} \nabla (\eta_{a}\varphi_{j}) |^{2} \, \, dx 
\leq \lambda_{k} \| \sum_{j = 1}^{k} \alpha_{j} \varphi_{j} \|^{2}_{L^{2}(\Omega)} + \beta_{a},
\end{align*}
with
\[
\beta_{a}  = \sum_{i,j = 1}^{k} \alpha_i\alpha_j\left( \int_{\Omega} |\nabla \eta_{a}|^{2} \varphi_{i} \varphi_{j} + 2 \eta_{a} \nabla \eta_{a}\cdot \nabla \varphi_{j} \varphi_{i} + ( \eta_{a}^{2}-1) \nabla \varphi_{i}\cdot \nabla \varphi_{j} \right) \, dx.
\]
Proceeding similarly to the proof of Theorem \ref{theorem:continuityinterior} we can estimate
\[
|\beta_a| \leq \varepsilon''_{a} \| \sum_{j = 1}^{k} \alpha_{j} \varphi_{j} \|^{2}_{L^{2}(\Omega)},
\]
with $\varepsilon''_a\to 0$ as $a\to b$. In conclusion, using Lemma \ref{lemma:boundary_tilde_varphi_j_definition}, we have obtained
\[
\| \Phi \|_{H^{1}_{A_{a}}(\Omega)}^2\leq \left(\lambda_k+\frac{\varepsilon''_a+\lambda_k\varepsilon_a}{1-\varepsilon_a}\right) \| \Phi \|_{L^2(\Omega)}^2 \quad \text{ for every } \Phi\in E_k,
\]
with $\varepsilon_a,\varepsilon''_a\to 0$ as $a\to b$. Therefore \eqref{eq:boundary_step1_limsup} is proved.

{\bf Step 2} We will now prove the second inequality
\begin{align*}
\liminf_{a \to b} \lambda_{k}^{a} \geq \lambda_{k}.
\end{align*}
Given a test function $\phi \in C^{\infty}_{0}(\Omega)$, for $a$ sufficiently close to $b$ we have that
\[
\{ta+(1-t)b, \ t\in[0,1]\} \subset \Omega\setminus\{\text{supp}\phi\}.
\]
Then $\phi\in H^1_{A_a}(\Omega)$ and Lemma \ref{lemma:boundary_theta_a_theta_b} implies that $e^{i\alpha\theta_a}\phi \in C_0^\infty(\Omega)$.
For this reason we can compute the following
\begin{equation}\label{eq:boundary_step2_testing}
\int_{\Omega} \nabla (e^{-i \alpha\theta_{b}} \varphi_{j}^{a})\cdot\nabla \bar{\phi} \, dx = \int_{\Omega} e^{-i \alpha\theta_{b}} \varphi_{j}^{a} (\overline{-\Delta( e^{-i \alpha\theta_{a}} \phi e^{i \alpha\theta_{a}})} ) \, dx.
\end{equation}
Since
\[
-\Delta( e^{-i \alpha\theta_{a}} \phi e^{i \alpha\theta_{a}})=(i\nabla+A_a)^2\phi-2iA_a\cdot\nabla\phi-i\nabla\cdot A_a\phi-|A_a|^2\phi,
\]
the right hand side in \eqref{eq:boundary_step2_testing} can be rewritten as
\[
\int_{\Omega} \left( (i\nabla+A_a)^2(e^{-i \alpha\theta_{b}} \varphi_{j}^{a})\bar{\phi} +
e^{-i \alpha\theta_{b}} \varphi_{j}^{a} (2iA_a\cdot \nabla\bar{\phi}+i\nabla\cdot A_a\bar{\phi}-|A_a|^2\bar{\phi}) \right) \, dx.
\]
At this point notice that
\begin{align*}
(i\nabla+A_a)^2(e^{-i \alpha\theta_{b}} \varphi_{j}^{a})= e^{-i \alpha\theta_{b}} & \left(
(i\nabla+A_a)^2\varphi_{j}^{a}+i\nabla\cdot A_b\varphi_{j}^{a}+2iA_b\cdot\nabla\varphi_{j}^{a} \right.\\
&\left.+|A_b|^2\varphi_{j}^{a}+2A_a\cdot A_b\varphi_{j}^{a} \right).
\end{align*}
By inserting these information in \eqref{eq:boundary_step2_testing} we obtain
\begin{equation}\label{eq:boudary_step2_testing_beta_a}
\int_{\Omega} \nabla (e^{-i \alpha\theta_{b}} \varphi_{j}^{a})\cdot\nabla \bar{\phi} \, dx = \lambda_j^a\int_\Omega e^{-i \alpha\theta_{b}} \varphi_{j}^{a}\bar{\phi} \, dx +\beta_a,
\end{equation}
with
\begin{align*}
\beta_a & =\int_\Omega e^{-i \alpha\theta_{b}} \bar{\phi}  \left(
i\nabla\cdot A_b\varphi_{j}^{a} + 2iA_b\cdot\nabla\varphi_{j}^{a} +|A_b|^2\varphi_{j}^{a}+2A_a\cdot A_b\varphi_{j}^{a} \right)\, dx \\
& + \int_\Omega e^{-i \alpha\theta_{b}} \varphi_{j}^{a}  \left( 2iA_a\cdot \nabla\bar{\phi}+i\nabla\cdot A_a\bar{\phi}-|A_a|^2\bar{\phi} \right) \, dx.
\end{align*}
Integration by parts leads
\[
\beta_a=\int_\Omega e^{-i \alpha\theta_{b}} \varphi_{j}^{a} \left(
-\bar{\phi}|A_a-A_b|^2+2i\nabla\bar{\phi}\cdot(A_a-A_b)+i\bar{\phi}\nabla\cdot(A_a-A_b)
\right)\, dx,
\]
so that $|\beta_a|\to0$ as $a\to b$, since $A_a\to A_b$ in $C^\infty(\text{supp}\{\phi\})$.
%
%
Therefore we can pass to the limit in \eqref{eq:boudary_step2_testing_beta_a} to obtain
\[
\int_{\Omega} \nabla  \tilde{\varphi}_{j}\cdot\nabla \bar{\phi} \, dx = \lambda_j^\infty \int_\Omega \tilde{\varphi}_{j}\bar{\phi} \, dx
\quad \text{ for every } \phi \in C^{\infty}_{0}(\Omega),
\]
where $\tilde{\varphi}_j$ is the weak limit of a suitable subsequence of $e^{-i \alpha\theta_{b}} \varphi_{j}^{a}$ (which exists by Step 1) and $\lambda_j^\infty:=\liminf_{a \to b} \lambda_{j}^{a}$.
The conclusion of the proof is as in Theorem \ref{theorem:continuityinterior}.
\end{proof}

\section{Differentiability of the simple eigenvalues with respect to the pole} \label{sec:differentiability}

In this section we prove Theorem \ref{theorem:differentiability_simple_eigenvalue}. We omit the subscript in the notation of the eigenvalues and eigenfunctions; with this notation, $\lambda^a$ is any eigenvalue of $(i\nabla+A_a)^2$ and $\varphi^a$ is an associated eigenfunction.

\begin{proof}[Proof of Theorem \ref{theorem:differentiability_simple_eigenvalue}]
Let $b\in\Omega$ be such that $\lambda^b$ is simple, as in the assumptions of the theorem.
For $R$ such that $B_{2R}(b)\subset\Omega$, let $\xi$ be a cut-off function satisfying $\xi\in C^\infty(\Omega)$, $0\leq\xi\leq1$, $\,\xi(x)=1$ for $x\in B_R(b)$ and $\xi(x)=0$ for $x\in \Omega\setminus B_{2R}(b)$. For every $a\in B_R(b)$ we define the transformation
\[
\Phi_a:\Omega\to\Omega, \qquad \Phi_a(x)=\xi(x)(x-b+a)+(1-\xi(x))x.
\]
Then $\varphi^a\circ\Phi_a \in H^1_{A_b}(\Omega)$ and satisfies, for every $a\in B_R(b)$,
\begin{equation}\label{eq:differentiability_v_b}
(i\nabla+A_b)^2(\varphi^a\circ\Phi_a) + \mathcal{L} (\varphi^a\circ\Phi_a)=\lambda^a \varphi^a\circ\Phi_a
\end{equation}
and
\begin{equation}\label{eq:differentiability_normalization}
\int_{\Omega} |\Phi'_a|^{2} |\varphi^{a} \circ \Phi_a|^2\,dx=1,
\end{equation}
where $\mathcal{L}$ is a second-order operator of the form
\[
\mathcal{L} v=-\sum_{i,j=1}^2 a^{ij}(x) \frac{\partial^2 v}{\partial x_i\partial x_j} 
+\sum_{i=1}^2 b^i(x) \frac{\partial v}{\partial x_i} + c(x) v,
\]
with $a^{ij},b^i,c \in C^\infty(\Omega, \C)$ vanishing in $B_R(b)$ and outside of $B_{2R}(b)$.
Notice that
\[
\Phi_a'(x)=I+\nabla \xi(x)\otimes(a-b)
\]
is a small perturbation of the identity whenever $\vert b-a\vert$ is sufficiently small, so that the operator in the left hand side of \eqref{eq:differentiability_v_b} is elliptic (see for example \cite[Lemma 9.8]{brezis2010functional}). 

To prove the differentiability, we will use the implicit function theorem in Banach spaces. To this aim, we define the operator
\begin{align}\label{eq:F_definition_implicit_function}
\begin{split}
& F : B_R(b) \times H^{1}_{A_{b}}(\Omega) \times \mathbb{R} \rightarrow (H^{1}_{A_{b}}(\Omega))^{\prime} \times \mathbb{R} \\
& (a, v, \lambda) \mapsto ((i \nabla + A_b)^{2} v +\mathcal{L}v - \lambda v, \int_{\Omega} |\Phi_a'|^{2} |v|^{2}\,dx - 1).
\end{split}
\end{align}
Notice that $F$ is of class $C^\infty$ by the ellipticity of the operator, provided that $R$ is suficiently small, and that $F(a,\varphi^a\circ\Phi_a,\lambda^a)=0$ for every $a\in B_R(b)$, as we saw in \eqref{eq:differentiability_v_b}, \eqref{eq:differentiability_normalization}. In particular we have $ F(b, \varphi^{b}, \lambda^{b}) = 0$,
since $\Phi_b$ is the identity.
We now have to verify that the differential of $F$ with respect to the variables $(v, \lambda)$, evaluated at the point $(b,\varphi^b,\lambda^{b})$, that we denote by $\mathrm{d}_{(v,\lambda)} F (b,\varphi^b,\lambda^{b})$, belongs to $Inv(H^{1}_{A_{b}}(\Omega)\times \mathbb{R}, (H^{1}_{A_{b}}(\Omega))^{\prime} \times \mathbb{R})$.
The differential is given by
\begin{align*}
\mathrm{d}_{(v,\lambda)} F (b,\varphi^b,\lambda^{b}) =
\begin{pmatrix}
& (i \nabla +  A_{b})^{2} - \lambda^{b} Im  & -  \varphi^b \\
& 2 \int_{\Omega} \overline{\varphi}^b \,dx  & 0 \\
\end{pmatrix},
\end{align*}
where $Im$ is the compact immersion of $H^{1}_{A_{b}}(\Omega)$ in $(H^{1}_{A_{b}}(\Omega))^{\prime}$, which was introduced in Lemma \ref{lemma:compactnessinversemagneticlaplacian}.

Let us first prove that it is injective. To this aim we have to show that, if $(w,s) \in H^{1}_{A_{b}}(\Omega)\times \mathbb{R}$ is such that
\begin{align}
(i \nabla + A_{b})^{2} w - \lambda^{b} w = s \varphi^b \label{eq:differentiability_injective1}\\
2 \int_{\Omega} \overline{\varphi}^b w \,dx = 0, \label{eq:differentiability_injective2}
\end{align}
then $(w,s)=(0,0)$. Relations \eqref{eq:differentiability_injective2} and \eqref{eq:differentiability_normalization} (with $a=b$ and $\Phi_b$ the identity) imply that
\begin{equation}\label{eq:differentiability_injective3}
w \neq k \varphi^b \quad\text{ for all } k\neq 0.
\end{equation}
By testing \eqref{eq:differentiability_injective1} by $\varphi^b$ we obtain
\[
s=\int_{\Omega} ((i \nabla + A_{b}) w \cdot \overline{(i \nabla + A_{b}) \varphi^b} -\lambda^{b} w \overline{\varphi^b} )\,dx.
\]
On the other hand, testing by $w$ the equation satisfied by $\varphi^b$, we see that
$s=0$, so that \eqref{eq:differentiability_injective1} becomes
\[
(i \nabla + A_{b})^{2} w = \lambda^{b} w.
\]
The assumption $\lambda^{b}$ simple, together with \eqref{eq:differentiability_injective3}, implies $w=0$. This concludes the proof of the injectivity.

For the surjectivity, we have to show that for all $(f,r) \in (H^{1}_{A_{b}}(\Omega))^{\prime} \times  \mathbb{R}$ there exist $(w,s)\in H^{1}_{A_{b}}(\Omega)\times \mathbb{R}$ which verifies the following equalities
\begin{align}
(i \nabla + A_{b})^{2} w - \lambda^{b}  w & = f + s \varphi^b \label{eq:differentiability_surjective1}\\
2 \int_{\Omega} \overline{\varphi}^b w \,dx & = r . \label{eq:differentiability_surjective2}
\end{align}
We recall that the operator $(i \nabla + A_{b})^{2} -\lambda^{b} Im : H^{1}_{A_{b}}(\Omega)\to (H^{1}_{A_{b}}(\Omega))^{\prime}$ is Fredholm of index 0. This a standard fact, which can be proved for example noticing that this operator is isomorphic to $Id-\lambda^{b}((i \nabla + A_{b})^{2})^{-1}(Im)$ through the Riesz isomorphism and because the operator $(i \nabla + A_{b})^{2}$ is invertible. This is Fredholm of index 0 because it has the form identity minus compact, the compactness coming from Lemma \ref{lemma:compactnessinversemagneticlaplacian}. Therefore we have (through Riesz isomorphism)
\begin{equation} \label{eq:differentiability_fredholm}
\text{Rank}((i \nabla + A_{b})^{2}-\lambda^{b} Im)=(\text{Ker}((i \nabla + A_{b})^{2} -\lambda^{b} Im))^{\perp}
=(\text{span}\{ \varphi^b \})^{\perp},
\end{equation}
where we used the assumption $\lambda^{b}$ simple in the last equality. As a consequence, we obtain from \eqref{eq:differentiability_surjective1} an expression for $s$
\[
s=-\int_{\Omega} f \overline{\varphi^b} \,dx.
\]
Next we can decompose $w$ in $w_{0} + w_{1}$ such that $w_{0} \in \text{Ker}((i \nabla + A_{b})^{2} -\lambda^{b} Im)$ and $w_{1}$ is in the orthogonal space. Condition \eqref{eq:differentiability_surjective1} becomes
\begin{align}
(i \nabla + A_{b})^{2} w_{1} - \lambda^{b} w_{1} = f - \varphi^b \int_{\Omega} f \overline{\varphi^b} \,dx
\end{align}
and \eqref{eq:differentiability_fredholm} ensures the existence of a solution $w_1$. Given such $w_1$, condition \eqref{eq:differentiability_surjective2} determines $w_0$ as follows
\[
w_0=\left(-\int_{\Omega}  \overline{\varphi}^b  w_1 \,dx+\frac{r}{2}\right) \varphi^b,
\]
so that the surjectivity is also proved.

We conclude that the implicit function theorem applies, so that the maps $a\in \Omega \mapsto \lambda^{a}\in\R$ and $a\in\Omega \mapsto \varphi^a \circ \Phi_a \in H^1_{A_b}(\Omega)$ are of class $C^\infty$ locally in a neighbourhood of $b$.
\end{proof}

By combining the previous result with a standard lemma of local inversion we deduce the following fact, which we will need in the next section.

\begin{corollary}\label{corollary:local_inversion}
Let $b\in\Omega$. If $\lambda^b$ is simple then the map $\Psi :  \Omega \times H^{1}_{A_{b}}(\Omega) \times \mathbb{R} \rightarrow  \mathbb{R} \times (H^{1}_{A_{b}}(\Omega))^{\prime} \times \mathbb{R}$ given by
\[
\Psi(a,v,\lambda)= (a,F(a,v,\lambda)),
\]
with $F$ defined in \eqref{eq:F_definition_implicit_function}, is locally invertible in a neighbourhood of $(b,\varphi^b,\lambda^b)$, with inverse $\Psi^{-1}$ of class $C^\infty$.
\end{corollary}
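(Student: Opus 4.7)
The strategy is to apply the inverse function theorem in Banach spaces at the point $(b,\varphi^b,\lambda^b)$. Two items need to be checked: that $\Psi$ is $C^\infty$ on a neighbourhood of that point, and that its Fr\'echet differential there is a topological isomorphism between the relevant Banach spaces.

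Smoothness is immediate. The first component $(a,v,\lambda)\mapsto a$ is linear, hence $C^\infty$. The second component $F$ was shown in the proof of Theorem \ref{theorem:differentiability_simple_eigenvalue} to be of class $C^\infty$ on a neighbourhood of $(b,\varphi^b,\lambda^b)$, provided that the radius $R$ used in constructing $\Phi_a$ and $\mathcal{L}$ is small enough so that the conjugated operator in \eqref{eq:differentiability_v_b} is uniformly elliptic. Hence $\Psi$ itself is $C^\infty$ near $(b,\varphi^b,\lambda^b)$.

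For the invertibility of the differential, I would split the source and the target according to the first factor and the remaining two factors, and write
\[
\mathrm{d}\Psi(b,\varphi^b,\lambda^b)=\begin{pmatrix} I & 0 \\ \mathrm{d}_a F(b,\varphi^b,\lambda^b) & \mathrm{d}_{(v,\lambda)} F(b,\varphi^b,\lambda^b) \end{pmatrix},
\]
which is block lower-triangular, since the first component of $\Psi$ does not depend on $(v,\lambda)$. A block operator of this form is a bounded isomorphism if and only if both diagonal blocks are. The top-left block is the identity on the first factor, hence trivially invertible. The bottom-right block is precisely the operator whose injectivity and surjectivity were established in the proof of Theorem \ref{theorem:differentiability_simple_eigenvalue}, using the simplicity of $\lambda^b$ together with the Fredholm-of-index-zero structure of $(i\nabla+A_b)^2-\lambda^b\,Im$. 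Being a continuous bijection between Banach spaces, it is automatically a topological isomorphism by the open mapping theorem.

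The inverse function theorem in Banach spaces then supplies a local $C^\infty$ inverse $\Psi^{-1}$ on a neighbourhood of $(b,\varphi^b,\lambda^b)$. There is essentially no hard step: all the analytical content has already been carried out in the proof of Theorem \ref{theorem:differentiability_simple_eigenvalue}, and the corollary is simply a repackaging of that implicit-function-theorem argument into an inverse-function-theorem statement, which becomes automatic once the block-triangular structure of the differential has been noted.
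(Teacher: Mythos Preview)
Your proof is correct and follows the same route as the paper's. The paper simply invokes Lemma~2.1 in Chapter~2 of Ambrosetti--Prodi \cite{AP1}, which is precisely the abstract statement that a map of the form $(a,u)\mapsto(a,F(a,u))$ is a local $C^\infty$ diffeomorphism once the partial differential $\mathrm{d}_{u}F$ is an isomorphism; you have unpacked that lemma by exhibiting the block lower-triangular structure of $\mathrm{d}\Psi$ and applying the inverse function theorem directly.
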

\begin{proof}
We saw in the proof of Theorem \ref{theorem:differentiability_simple_eigenvalue} that, if $\lambda^b$ is simple, then $\mathrm{d}_{(v,\lambda)} F (b,\varphi^b,\lambda^b)$ is invertible. It is sufficient to apply Lemma 2.1 in Chapter 2 of the book of Ambrosetti and Prodi \cite{AP1}.
\end{proof}


\section{Vanishing of the derivative at a multiple zero} \label{sec:comportementeigenvalues}

In this section we prove Theorem \ref{theorem:comportmentofeigenvelue}. Recall that here $\alpha=1/2$. We will need the following preliminary results.

\begin{lemma}\label{lemma:estimate_on_normal_derivative}
Let $\lambda>0$ and let $D_r=D_r(0)\subset \R^2$. Consider the following set of equations for $r>0$ small
\begin{equation}\label{eq:estimate_normal_derivative}
\left\{\begin{array}{ll}
        -\Delta u=\lambda u \quad &\text{ in } D_r \\
		u=r^{k/2}f+g(r,\cdot) \quad &\text{ on } \partial D_r,
       \end{array}\right.
\end{equation}
where $f,g(r,\cdot) \in H^{1}(\partial D_r)$ and $g$ satisfies
\begin{equation}\label{eq:estimate_normal_derivative_assumption}
\lim_{r\to0} \frac{\|g(r,\cdot)\|_{H^{1}(\partial D_r)}}{r^{k/2}}=0 
\end{equation}
for some integer $k\geq3$. Then for $r$ sufficiently small there exists a unique solution to \eqref{eq:estimate_normal_derivative}, which moreover satisfies
\[
\|u\|_{L^2(D_r)}\leq C r^{(k+2)/2} \quad\text{ and }\quad
\left\|\frac{\partial u}{\partial \nu}\right\|_{L^2(\partial D_r)}\leq C r^{(k-1)/2},
\]
where $C>0$ is independent of $r$.
\end{lemma}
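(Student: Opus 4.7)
The plan is to combine a spectral argument for existence and uniqueness with an explicit Fourier analysis of the harmonic extension on the disk, after rescaling to $D_1$. For the existence and uniqueness part, observe that the $j$-th Dirichlet eigenvalue of $-\Delta$ on $D_r$ equals $\mu_j(D_1)/r^2$ and diverges as $r\to 0$. Hence for $r$ small enough, $\lambda$ lies strictly below the first Dirichlet eigenvalue of $-\Delta$ on $D_r$ and is not in its spectrum; by the Fredholm alternative, \eqref{eq:estimate_normal_derivative} admits a unique solution.

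For the quantitative estimates, I would rescale by setting $v(y)=u(ry)$ on $D_1$, so that $v$ solves $-\Delta v=r^2\lambda v$ in $D_1$ with boundary trace $v|_{\partial D_1}=h(\theta):=r^{k/2}f(\theta)+g(r,\theta)$. The change of variables gives $\|u\|_{L^2(D_r)}=r\|v\|_{L^2(D_1)}$ and $\|\partial_\nu u\|_{L^2(\partial D_r)}^2=r^{-1}\|\partial_\nu v\|_{L^2(\partial D_1)}^2$, so it suffices to show $\|v\|_{L^2(D_1)}+\|\partial_\nu v\|_{L^2(\partial D_1)}=O(r^{k/2})$. Next I would decompose $v=\tilde h+w$, where $\tilde h$ is the harmonic extension of $h$ to $D_1$ and $w\in H_0^1(D_1)$ solves $-\Delta w-r^2\lambda w=r^2\lambda\tilde h$. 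Since $r^2\lambda$ stays uniformly bounded away from the Dirichlet spectrum of $D_1$ for $r$ small, Lax--Milgram together with elliptic regularity up to the boundary yields $\|w\|_{H^1(D_1)}+\|\partial_\nu w\|_{L^2(\partial D_1)}\leq Cr^2\|\tilde h\|_{L^2(D_1)}$, a strictly lower-order correction. The main term $\tilde h$ is then handled via the Fourier expansion $h(\theta)=\sum_n h_n e^{in\theta}$ on $S^1$, giving $\tilde h(\rho,\theta)=\sum_n h_n\rho^{|n|}e^{in\theta}$ together with the explicit identities
\[
\|\tilde h\|_{L^2(D_1)}^2=\pi\sum_n\frac{|h_n|^2}{|n|+1},\qquad \|\partial_\nu\tilde h\|_{L^2(\partial D_1)}^2=\|\partial_\theta h\|_{L^2(S^1)}^2.
\]

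Substituting $h=r^{k/2}f+g$ and using that $f$ is the fixed angular profile arising in Proposition \ref{proposition:asymptotic_expansion_eigenfunction} (so $\|f\|_{H^1(S^1)}$ is a bounded constant) while $\|g(r,\cdot)\|_{H^1(\partial D_r)}=o(r^{k/2})$ by hypothesis, both sums are dominated by the $f$-contribution of order $O(r^k)$, yielding the targeted bounds on $v$ and hence the claimed $O(r^{(k+2)/2})$ and $O(r^{(k-1)/2})$ bounds on $u$. The delicate point I expect to be the main obstacle is the careful bookkeeping of the $r$-dependent norms on $\partial D_r$: a fixed angular function $f$ has $\|f\|_{H^1(\partial D_r)}\sim r^{-1/2}$ because of the arclength element $r\,d\theta$, and it is precisely this intrinsic $r^{-1/2}$ factor which produces the sharp exponents $(k+2)/2$ in the interior $L^2$ bound and $(k-1)/2$ in the normal-derivative bound once one traces it through the scaling identities and the harmonic-extension estimates.
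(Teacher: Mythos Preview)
Your approach is essentially the same as the paper's. Both proofs rescale to $D_1$, split the solution as the harmonic extension of the boundary datum plus an $H^1_0$-correction solving $(-\Delta-\lambda r^2)w=\lambda r^2(\text{harmonic part})$, control the correction via coercivity of $\int(|\nabla\cdot|^2-\lambda r^2|\cdot|^2)$ for small $r$ together with $H^2$-regularity, and then estimate the harmonic part. The only genuine difference is in this last step: the paper bounds the harmonic extension $z_1$ of $f+r^{-k/2}g$ by comparing it to the harmonic extension $w$ of $f$ alone (so the normal-derivative estimate reduces to a fixed $r$-independent quantity plus a vanishing error), whereas you compute the Dirichlet-to-Neumann map explicitly via Fourier series, obtaining $\|\partial_\nu\tilde h\|_{L^2(\partial D_1)}=\|\partial_\theta h\|_{L^2(S^1)}$. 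Your route is slightly more elementary and makes the exponent bookkeeping transparent; the paper's route is a bit more robust and avoids writing down the series. Either way the argument goes through, and your identification of the $r^{-1/2}$ intrinsic scaling of $\|f\|_{H^1(\partial D_r)}$ as the source of the precise exponents $(k+2)/2$ and $(k-1)/2$ is exactly the right observation.
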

\begin{proof}
Let $z_1$ solve
\[
\left\{\begin{array}{ll}
        -\Delta z_1=0 \quad &\text{ in } D_1 \\
		z_1=f+r^{-k/2}g(r,\cdot) \quad &\text{ on } \partial D_1.
       \end{array}\right.
\]
Since the quadratic form
\begin{equation}\label{eq:quadratic_form_coercive}
\int_{D_1}\left(|\nabla v|^2-\lambda r^2 v^2\right)\,dx
\end{equation}
is coercive for $v \in H^1_0(D_1)$ for $r$ sufficiently small, there exists a unique solution $z_2$ to the equation
\begin{equation}\label{eq:z_2_equation}
\left\{\begin{array}{ll}
        -\Delta z_2-\lambda r^2 z_2=\lambda r^2 z_1 \quad &\text{ in } D_1 \\
		z_2=0 \quad &\text{ on } \partial D_1.
       \end{array}\right.
\end{equation}
Then $u(x)=r^{k/2}(z_1(x/r)+z_2(x/r))$ is the unique solution to \eqref{eq:estimate_normal_derivative}. In order to obtain the desired bounds on $u$ we will estimate separately $z_1$ and $z_2$. Assumption \eqref{eq:estimate_normal_derivative_assumption} implies
\begin{align}\label{eq:estimatez1}
\|z_1\|_{H^1(D_1)} = \|f+r^{-k/2}g(r,\cdot)\|_{H^{1/2}(\partial D_1)}\leq C \|f\|_{H^{1}(\partial D_1)},
\end{align}
for $r$ sufficiently small.
We compare the function $z_1$ to its limit function when $r\to0$, which is the harmonic extension of $f$ in $D_1$, which we will denote $w$. Then we have
\[
\left\{\begin{array}{ll}
        -\Delta (z_1-w)=0 \quad &\text{ in } D_1 \\
		z_1-w=r^{-k/2}g(r,\cdot) \quad &\text{ on } \partial D_1,
       \end{array}\right.
\]
and hence \eqref{eq:estimate_normal_derivative_assumption} implies
\[
\left\|\frac{\partial }{\partial \nu}(z_1-w)\right\|_{L^2(\partial D_1)}\leq C\|z_1-w\|_{H^1(\partial D_1)}=
C\frac{\|g(r,\cdot)\|_{H^1(\partial D_1)}}{r^{k/2}}\to0.
\]
Then we estimate $z_2$ as follows
\begin{align*}
\|z_2 \|_{L^2(D_1)}^2 & \leq C \int_{D_1} |\nabla z_2 |^2 \, dx 
\leq C \int_{D_1} (|\nabla z_2 |^2 - \lambda r^2 z_2^2 )\, dx \\
& \leq C \|\lambda r^2 z_1\|_{L^2(D_1)} \|z_2\|_{L^2(D_1)},
\end{align*}
where we used Poincar\'e inequality, the coercivity of the quadratic form \eqref{eq:quadratic_form_coercive} and the defintion of $z_2$ \eqref{eq:z_2_equation}. Hence estimate \eqref{eq:estimatez1} implies
\[
\|z_2 \|_{L^2(D_1)} \leq C r^2 \|f\|_{H^{1}(\partial D_1)} \to 0 \quad \text{as } r\to0.
\]
This and \eqref{eq:estimatez1} provide, by a change of variables in the integral, the desired estimate on $\|u\|_{L^2(D_r)}$. Now, the standard bootstrap argument for elliptic equations applied to \eqref{eq:z_2_equation} provides
\begin{align*}
\| z_2 \|_{H^2(D_1)} \leq C ( \| \lambda r^2 z_1 \|_{L^2(D_1)} + \| z_2 \|_{L^2(D_1)} ) \to 0,
\end{align*}
and hence the trace embedding implies
\begin{align*}
 \left\| \frac{\partial z_2}{\partial \nu} \right\|_{L^2(\partial D_1)} \leq C \| \nabla z_2 \|_{H^1(D_1)} \leq C \| z_2 \|_{H^2(D_1)} \to 0.
\end{align*}
So, we have obtained that there exists $C > 0$ independent of $r$ such that
\begin{align*}
\left\| \frac{\partial}{\partial \nu} (z_1 + z_2) \right\|_{L^2(\partial D_1)} \leq C.
\end{align*}
Finally, going back to the function $u$, we have
\begin{align*}
\left\| \frac{\partial u}{\partial \nu} \right\|_{L^2(\partial D_r)}
=r^{(k-1)/2} \left\| \frac{\partial}{\partial \nu} (z_1 + z_2) \right\|_{L^2(\partial D_1)} \leq Cr^{(k-1)/2}
\end{align*} 
where we used the change of variable $x = ry$.
\end{proof}

\begin{lemma} \label{lemma:uniformbound}
Let $\phi\in H^1_{A_a}(\Omega)$ ($a\in\Omega$). Then
\begin{align}\label{eq:L_2_norm_boundary}
\frac{1}{|a|^{1/2}} \|\phi\|_{L^2(\partial D_{|a|})} \leq C \| \phi \|_{H^{1}_{A_a}(\Omega)}
\end{align}
where $C$ only depends on $\Omega$.
\end{lemma}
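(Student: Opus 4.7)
The plan is to prove this trace-type estimate by a scaling argument that reduces things to the unit disk, and then to control the resulting volume terms by combining the Hardy-type inequality \eqref{eq:hardy} with the embedding $H^1_{A_a}(\Omega)\hookrightarrow H^1_0(\Omega)$ from \eqref{eq:H_1_A_a_characterization}. Throughout, $D_{|a|}$ is interpreted as $D_{|a|}(0)$ (consistent with the notation $D_r=D_r(0)$ of the preceding lemma), and the relevant regime---which is the one used in the forthcoming proof of Theorem \ref{theorem:comportmentofeigenvelue} where $a$ tends to an interior extremal point placed at the origin---is that $|a|$ is small enough to have $D_{|a|}(0)\subset\Omega$.

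First I would rescale: setting $\tilde\phi(y)=\phi(|a|y)$ for $y\in D_1$, the standard trace inequality
$$\|\tilde\phi\|^2_{L^2(\partial D_1)} \leq C\bigl(\|\tilde\phi\|^2_{L^2(D_1)}+\|\nabla\tilde\phi\|^2_{L^2(D_1)}\bigr),$$
combined with the change of variables $x=|a|y$ (which gives $dx=|a|^2\,dy$ and a factor $|a|$ for the one-dimensional boundary measure), immediately yields
$$\frac{1}{|a|}\|\phi\|^2_{L^2(\partial D_{|a|})} \leq C\left(\frac{1}{|a|^2}\|\phi\|^2_{L^2(D_{|a|})}+\|\nabla\phi\|^2_{L^2(D_{|a|})}\right).$$

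Next I would estimate each term on the right. The continuous embedding \eqref{eq:H_1_A_a_characterization} handles the gradient term at once, since $\|\nabla\phi\|_{L^2(D_{|a|})}\leq\|\phi\|_{H^1_0(\Omega)}\leq C\|\phi\|_{H^1_{A_a}(\Omega)}$. For the weighted $L^2$ term the key observation is that on $D_{|a|}(0)$ the triangle inequality gives $|x-a|\leq|x|+|a|\leq 2|a|$, hence the pointwise bound $|a|^{-2}\leq 4|x-a|^{-2}$. Applying the Hardy inequality \eqref{eq:hardy} with $D=\Omega$ (which is admissible as $\Omega$ is simply connected and, modulo a standard approximation, smooth) then gives
$$\frac{1}{|a|^2}\|\phi\|^2_{L^2(D_{|a|})} \leq 4\left\|\frac{\phi}{|x-a|}\right\|^2_{L^2(\Omega)} \leq C\|\phi\|^2_{H^1_{A_a}(\Omega)}.$$
Combining the two estimates and extracting a square root produces the claimed inequality with a constant depending only on $\Omega$.

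The only delicate point, and the one worth double-checking, is the uniformity of the constant in $a$: this is ensured by taking the Hardy estimate on the fixed domain $\Omega$ rather than on a shrinking subdomain $D$, so that only the geometry of $\Omega$ enters. The implicit smallness assumption $D_{|a|}(0)\subset\Omega$ is automatic in the application to Theorem \ref{theorem:comportmentofeigenvelue}, where $a\to b=0$; should one wish a statement valid for arbitrary $a\in\Omega$, the same proof goes through with $D_{|a|}$ replaced by $D_{|a|}(0)\cap\Omega$, at the price of a mildly more involved trace inequality on the intersection.
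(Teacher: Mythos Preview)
Your argument is correct. Both your proof and the paper's begin with the same rescaling $\tilde\phi(y)=\phi(|a|y)$ and the trace inequality on the unit disk; the difference lies in how the resulting volume terms are controlled.

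The paper stays in the rescaled picture: it enlarges $D_1$ to $D_2$ so that the rescaled pole $e=a/|a|\in\partial D_1$ lies in the interior, applies the Hardy inequality \eqref{eq:hardy} on $D_2$ (with pole $e$) to bound both $\|\tilde\phi\|_{L^2(D_2)}$ and $\|\nabla\tilde\phi\|_{L^2(D_2)}$ by $\|(i\nabla+A_e)\tilde\phi\|_{L^2(D_2)}$, and then returns to $\Omega$ via the dilation invariance of the magnetic Dirichlet form. You instead scale back immediately to a weighted inequality on $D_{|a|}$, use the elementary pointwise bound $|a|^{-1}\le 2|x-a|^{-1}$ on $D_{|a|}(0)$ to convert the weight, and apply Hardy once on the fixed domain $\Omega$. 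Your route is a bit more economical---no enlargement to $D_2$, no explicit invocation of scale invariance---while the paper's route makes the uniformity in $a$ visually transparent by working on the fixed pair $D_1\subset D_2$. Either way the constant depends only on $\alpha$ through the Laptev--Weidl Hardy constant, hence only on $\Omega$ in the sense of the statement.
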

\begin{proof}
Set $\tilde{\phi}(y)=\phi(|a|y)$ defined for $y\in\tilde{\Omega}=\{ x/|a|:\ x\in\Omega \}$. We apply this change of variables to the left hand side in \eqref{eq:L_2_norm_boundary} and then use the trace embedding to obtain
\[
\frac{1}{|a|^{1/2}} \|\phi\|_{L^2(\partial D_{|a|})}=\|\tilde{\phi}\|_{L^2(\partial D_1)}\leq 
C \|\tilde{\phi}\|_{H^1(D_1)} \leq C \|\tilde{\phi}\|_{H^1(D_2)} .
\]
We have that $\tilde{\phi}\in H^1_{A_{e}}(\tilde{\Omega})$, where $e=a/|a|$. Therefore we can apply relation \eqref{eq:hardy} as follows
\[
\|\tilde{\phi}\|_{L^2(D_2)}\leq 
\|y-e\|_{L^\infty(D_2)}\left\|\frac{\tilde{\phi}}{|y-e|} \right\|_{L^2(D_2)}
\leq C\|(i\nabla+A_{e})\tilde{\phi} \|_{L^2(D_2)},
\]
\[
\begin{split}
\|\nabla \tilde\phi\|_{L^2(D_2)} &\leq \|(i\nabla+A_e)\tilde\phi\|_{L^2(D_2)} + \|A_a\tilde\phi\|_{L^2(D_2)} \\
&\leq \|(i\nabla+A_e)\tilde\phi\|_{L^2(D_2)} + \|(y-e)A_e\|_{L^\infty(D_2)} \left\|\frac{\tilde{\phi}}{|y-e|} \right\|_{L^2(D_2)} \\
& \leq C\|(i\nabla+A_{e})\tilde{\phi} \|_{L^2(D_2)}.
\end{split}
\]
We combine the previous inequalities obtaining
\[
\begin{split}
\frac{1}{|a|^{1/2}} \|\phi\|_{L^2(\partial D_{|a|})} 
\leq C \|(i\nabla+A_{e})\tilde{\phi} \|_{L^2(D_2)} 
\leq C \| \phi \|_{H^{1}_{A_a}(\Omega)},
\end{split}
\]
where in the last step we used the fact that the quadratic form is invariant under dilations.
\end{proof}

To simplify the notations we suppose without loss of generality that $0\in\Omega$ and we take $b=0$. Moreover, we omit the subscript in the notation of the eigenvalues as we did in the previous section.
As a first step in the proof of Theorem \ref{theorem:comportmentofeigenvelue}, we shall estimate $|\lambda^a-\lambda^0|$ in the case when the pole $a$ belongs to a nodal line of $\varphi^{0}$ ending at $0$. We make this restriction because all the constructions in the following proposition require that $\varphi^0$ vanishes at $a$.

\begin{proposition} \label{proposition:comportmenteigenvalues}
Suppose that $\lambda^{0}$ is simple and that $\varphi^{0}$ has a zero of order $k/2$ at the origin, with $k \geq 3$ odd. Denote by $\Gamma$ a nodal line of $\varphi^0$ with endpoint at $0$ (which exists by Proposition \ref{proposition:asymptotic_expansion_eigenfunction}) and take $a\in\Gamma$. Then there exists a constant $C>0$ independent of $|a|$ such that
\begin{align*}
|\lambda^{a} - \lambda^{0}|  \leq C |a|^{k/2} \quad \text{ as }|a| \rightarrow 0, \ a\in\Gamma.
\end{align*}
\end{proposition}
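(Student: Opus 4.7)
The approach is variational, based on the min-max characterization \eqref{eigenvalueofmagneticoperator}. The key is to exploit the hypothesis $a \in \Gamma$ by constructing a trial function in $H^1_{A_a}(\Omega)$ whose Rayleigh quotient equals $\lambda^0$ exactly.

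\emph{Construction of the trial function.} With $\alpha = 1/2$, choose branches of the polar angles $\theta_0, \theta_a$ (as in Lemma \ref{lemma:interior_theta_a_theta_b}) whose cuts lie on $\Gamma$ and are consistently oriented so that $\theta_a - \theta_0$ is discontinuous only across the arc of $\Gamma$ joining $0$ to $a$. Define
\[
\tilde\varphi(x) := e^{i(\theta_a(x) - \theta_0(x))/2}\, \varphi^0(x).
\]
The phase factor has monodromy $-1$ around each of the two poles, hence jumps by $-1$ across the arc from $0$ to $a$; but since $\varphi^0 \equiv 0$ on $\Gamma$, this jump is absorbed and $\tilde\varphi$ extends continuously to a single-valued function on $\Omega$. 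I would then verify $\tilde\varphi \in H^1_{A_a}(\Omega)$: the $L^2$ condition is inherited from $|\tilde\varphi| = |\varphi^0|$; the weighted condition $\tilde\varphi/|x-a| \in L^2$ follows from the smoothness of $\varphi^0$ near $a$ together with $\varphi^0(a) = 0$; and the regularity at $0$ is controlled by Proposition \ref{proposition:asymptotic_expansion_eigenfunction}. The gauge identity
\[
(i\nabla + A_a)\tilde\varphi = e^{i(\theta_a - \theta_0)/2}(i\nabla + A_0)\varphi^0
\]
then yields $\|\tilde\varphi\|_{L^2}^2 = 1$ and $\|(i\nabla + A_a)\tilde\varphi\|_{L^2}^2 = \lambda^0$, i.e. $\tilde\varphi$ has Rayleigh quotient exactly $\lambda^0$.

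\emph{Eigenvalue comparison.} For $j = 1$ this immediately gives $\lambda^a \leq \lambda^0$. For general $j$, I would expand $\tilde\varphi = \sum_k c_k\, \varphi_k^a$ in the spectral basis of $(i\nabla + A_a)^2$; the identities $\sum_k |c_k|^2 = 1$ and $\sum_k |c_k|^2 \lambda_k^a = \lambda^0$, together with the max-min characterization of $\lambda_j^a$ as the infimum of $R_a(\Phi)$ over $\Phi$ orthogonal to $\varphi_1^a,\ldots,\varphi_{j-1}^a$, give
\[
\lambda_j^a \leq \frac{\lambda^0 - \sum_{i<j}\lambda_i^a|c_i|^2}{1 - \sum_{i<j}|c_i|^2}.
\]
The bound $\lambda_j^a \leq \lambda^0 + C|a|^{k/2}$ then reduces to establishing $\sum_{i<j}|c_i|^2 = O(|a|^{k/2})$. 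This would be obtained by splitting $c_i = \langle \tilde\varphi - \varphi^0,\varphi_i^a\rangle + \langle \varphi^0, \varphi_i^a - \varphi_i^0\rangle$ (using $\langle \varphi^0,\varphi_i^0\rangle = 0$ for $i<j$) and controlling each piece through the asymptotic expansion $\varphi^0 \sim |x|^{k/2}$ near $0$, which limits the size of $\tilde\varphi - \varphi^0$ (concentrated near the segment $[0,a]$) and, combined with the smooth dependence of $\varphi_i^a$ on $a$ (Corollary \ref{corollary:local_inversion}), produces the rate $|a|^{k/2}$.

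\emph{Lower bound and main obstacle.} For the reverse inequality, a parallel construction with $\varphi^a$ playing the role of $\varphi^0$ (using an approximation of its nodal arc near $0$ justified by Corollary \ref{corollary:local_inversion}) yields a trial function in $H^1_{A_0}(\Omega)$, giving $\lambda^0 \leq \lambda^a + C|a|^{k/2}$. The principal technical difficulty is quantitatively controlling $\sum_{i<j}|c_i|^2$ at the rate $|a|^{k/2}$ in the upper bound, as well as justifying the approximate nodal structure of $\varphi^a$ in the lower bound; both require a delicate combination of the asymptotic expansion of Proposition \ref{proposition:asymptotic_expansion_eigenfunction}, the smooth dependence provided by Corollary \ref{corollary:local_inversion}, and the trace bound of Lemma \ref{lemma:uniformbound} applied on boundaries of disks of radius comparable to $|a|$ centered at the poles.
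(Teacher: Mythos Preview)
Your trial function $\tilde\varphi = e^{i(\theta_a-\theta_0)/2}\varphi^0$ coincides with the paper's $u_a^{ext}$ (equation~\eqref{eq:u_a_ext_definition}), and your observation that it lies in $H^1_{A_a}(\Omega)$ with Rayleigh quotient exactly $\lambda^0$ is correct and pleasant. The paper, however, replaces this function inside the disk $D_{|a|}$ by the solution of a Dirichlet problem (equations~\eqref{eq:psi_a_int}--\eqref{eq:u_a_int_equation}); the purpose of that modification is not to improve the Rayleigh quotient but to produce an approximate eigenfunction whose defect $g_a=(i\nabla+A_a)^2u_a-\lambda^0u_a$ lives in $(H^1_{A_a}(\Omega))'$ with $\|g_a\|\le C|a|^{k/2}$. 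Once that dual-norm estimate is in hand, Corollary~\ref{corollary:local_inversion} (local inversion of the map $\Psi$) yields $|\lambda^a-\lambda^0|\le C|a|^{k/2}$ in a single stroke, upper and lower bound together, with no spectral decomposition needed.

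Your variational route has two genuine gaps. First, the splitting $c_i = \langle\tilde\varphi-\varphi^0,\varphi_i^a\rangle + \langle\varphi^0,\varphi_i^a-\varphi_i^0\rangle$ does not give $|c_i|^2=O(|a|^{k/2})$ for $k\ge5$: the difference $\tilde\varphi-\varphi^0=(e^{i(\theta_a-\theta_0)/2}-1)\varphi^0$ is \emph{not} concentrated near the segment $[0,a]$, since $|e^{i(\theta_a-\theta_0)/2}-1|\sim|a|/|x|$ for $|x|\gg|a|$, which yields only $\|\tilde\varphi-\varphi^0\|_{L^2}^2=O(|a|^2\log(1/|a|))$; the second term is $O(|a|)$ by the smooth dependence of Corollary~\ref{corollary:local_inversion}. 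Both bounds fall short of $|a|^{k/4}$ once $k\ge5$. (One could instead read $c_i$ from the weak equation for $\tilde\varphi$, whose right-hand side is the jump of the magnetic normal derivative along the arc of $\Gamma$; but estimating that jump in the dual norm is exactly the content of the paper's Steps~2--4, and at that point it is simpler to invoke the local-inversion theorem than to sum a spectral series.)

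Second, and more seriously, the lower bound via a ``parallel construction with $\varphi^a$ playing the role of $\varphi^0$'' presupposes that $\varphi^a$ has a nodal arc from $a$ passing through or near $0$, with vanishing order quantitatively comparable to $k/2$. None of this is available a priori, and establishing it would essentially require the estimate you are trying to prove. The paper never needs such information: because the map $\Psi$ is a local $C^\infty$-diffeomorphism near $(0,\varphi^0,\lambda^0)$, the single estimate on $(g_a,\epsilon_a)$ automatically controls $|\lambda^a-\lambda^0|$ from both sides.
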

\begin{proof}
The idea of the proof is to construct a function $u_a\in H^1_{A_a}(\Omega)$ satisfying
\begin{equation}\label{eq:ua_definition}
(i \nabla + A_{a})^{2} u_{a} - \lambda^{0} u_{a} = g_{a}, \qquad \|u_a\|_{L^2(\Omega)}=1 - \epsilon_a
\end{equation}
with
\begin{equation}\label{eq:ga_epsa}
\|g_a\|_{(H^1_{A_a}(\Omega))'}\simeq |a|^{k/2} \qquad\text{ and }\qquad |\epsilon_a| \simeq |a|^{(k+2)/2}
\end{equation}
and then to apply the Corollary \ref{corollary:local_inversion}.
For the construction of the function $u_a$ we will heavily rely on the assumption $a\in\Gamma$.

{\bf Step 1:} construction of $u_a$. We define it separately in $D_{|a|}=D_{|a|}(0)$ and in its complement $\Omega \setminus D_{|a|}$, using the following notation
\begin{align}
u_{a} = \left\{ \begin{aligned} & u_{a}^{ext} \qquad  \Omega \backslash D_{|a|} & \\
                                & u_{a}^{int} \qquad  D_{|a|} .&
                                \end{aligned} \right.
\end{align}
Concerning the exterior function we set
\begin{align}\label{eq:u_a_ext_definition}
u_{a}^{ext} = e^{i \alpha (\theta_{a} - \theta_{0})} \varphi^{0},
\end{align}
where $\theta_a,\theta_0$ are defined as in Lemma \ref{lemma:interior_theta_a_theta_b} in such a way that $\theta_a-\theta_0$ is regular in $\Omega\setminus D_{|a|}$ (here $\theta_0=\theta$ is the the angle in the usual polar coordinates, but we emphasize the position of the singularity in the notation).  
Therefore $u_{a}^{ext}$ solves the following magnetic equation
\begin{align}\label{eq:u_a_ext_equation}
\left\{ \begin{aligned}  
&(i \nabla + A_{a})^{2} u_{a}^{ext}  = \lambda^{0} u_{a}^{ext} \qquad &  \Omega \backslash D_{|a|}  \\
& u_{a}^{ext}  = e^{i \alpha (\theta_{a} - \theta_{0})} \varphi^{0} \qquad  & \partial D_{|a|} \\
& u_{a}^{ext}  =0 \qquad  & \partial\Omega. 
         \end{aligned} \right.
\end{align}

For the definition of $u_a^{int}$ we will first consider a related elliptic problem. Notice that, by our choice $a\in\Gamma$, we have that $e^{-i\alpha\theta_0}\varphi^0$ is continuous on $\partial D_{|a|}$. Indeed, $e^{-i\alpha\theta_0}$ restricted to $\partial D_{|a|}$ is discontinuous only at the point $a$, where $\varphi^0$ vanishes. Moreover, note that this boundary trace is at least $H^1(\partial D_{|a|})$. Indeed, the eigenfunction $\varphi^{0}$ is $C^{\infty}$ far from the singularity and $e^{i \alpha \theta_{0}}$ is also regular except on the point $a$. Then, the boundary trace is differentiable almost everywhere.

This allows to apply Lemma \ref{lemma:estimate_on_normal_derivative}, thus providing the existence of a unique function $\psi_a^{int}$, solution of the following equation
\begin{align} \label{eq:psi_a_int}
\left\{ \begin{aligned} & - \Delta \psi_{a}^{int} = \lambda^{0} \psi_{a}^{int} \qquad & D_{|a|} \\
                        & \psi_{a}^{int} = e^{-i\alpha\theta_0}\varphi^0 \qquad & \partial D_{|a|}.
                        \end{aligned} \right.
\end{align}
Then we complete our construction of $u_a$ by setting
\begin{align}\label{eq:u_a_int_definition}
u_{a}^{int} = e^{i \alpha \theta_{a}} \psi_{a}^{int},
\end{align} 
which is well defined since $\theta_a$ is regular in $D_{|a|}$. Note that $u_a^{int}$ solves the following elliptic equation
\begin{align}\label{eq:u_a_int_equation}
\left\{ \begin{aligned}  
& (i \nabla + A_{a})^{2} u_{a}^{int}  = \lambda^{0} u_{a}^{int} \qquad  & D_{|a|} &\\                         
& u_{a}^{int}  = u_{a}^{ext} \qquad  & \partial D_{|a|}.
\end{aligned} \right.
\end{align}

{\bf Step 2:} estimate of the normal derivative of $u_a^{int}$ along $\partial D_{|a|}$. By assumption, $\varphi^0$ has a zero of order $k/2$ at the origin, with $k\geq3$ odd. Hence by Proposition \ref{proposition:asymptotic_expansion_eigenfunction} the following asymptotic expansion holds on $\partial D_{|a|}$ as $|a|\to0$
\begin{equation}\label{eq:asymptic_expansion_varphi_0}
e^{-i\alpha\theta_0}\varphi^0(|a|,\theta_0)=\frac{|a|^{k/2}}{k}[c_k \cos(k\alpha\theta_0)+ d_k \sin(k\alpha\theta_0)]+g(|a|,\theta_0),
\end{equation}
with
\begin{equation}\label{eq:asymptic_expansion_varphi_0_remainder}
\lim_{|a|\to0}\frac{\|g(|a|,\cdot)\|_{C^1(\partial D_{|a|})}}{|a|^{k/2}}=0.
\end{equation}
Hence Lemma \ref{lemma:estimate_on_normal_derivative} applies to $\psi_a^{int}$ given in \eqref{eq:psi_a_int}, providing the existence of a constant $C$ independent of $|a|$ such that
\begin{equation}\label{eq:asymptic_expansion_varphi_0_L2}
\|\psi_a^{int}\|_{L^2(D_{|a|})}\leq C |a|^{(k+2)/2} \quad\text{and}\quad
\left\|\frac{\partial \psi_a^{int}}{\partial\nu}\right\|_{L^2(\partial D_{|a|})}\leq C |a|^{(k-1)/2}.
\end{equation}
Finally, differentiating \eqref{eq:u_a_int_definition} we see that
\[
( i\nabla +A_a ) u_a^{int}=ie^{i\alpha\theta_a}\nabla\psi_a^{int},
\]
so that, integrating, we obtain the following $L^2$-estimate for the magnetic normal derivative of $u_a^{int}$ along $\partial D_{|a|}$
\begin{equation}\label{eq:estimate_normal_derivative_u_a_int}
\|(i\nabla+A_a)u_a^{int}\cdot \nu\|_{L^2(\partial D_{|a|})}\leq C|a|^{(k-1)/2}.
\end{equation}

{\bf Step 3:} estimate of the normal derivative of $u_a^{ext}$ along $\partial D_{|a|}$. We differentiate \eqref{eq:u_a_ext_definition} to obtain
\begin{align}\label{eq:differential_u_a_ext}
( i\nabla + A_a ) u_a^{ext} =A_0 u_a^{ext}+ie^{i\alpha(\theta_a-\theta_0)}\nabla\varphi^0. 
\end{align}
On the other hand, the following holds a.e. 
\[
\nabla\varphi^0=iA_0\varphi^0+e^{i\alpha\theta_0}\nabla(e^{-i\alpha\theta_0}\varphi^0),
\]
so that
\[
ie^{i\alpha(\theta_a-\theta_0)}\nabla\varphi^0=-A_0 u_a^{ext} +i e^{i\alpha\theta_a} \nabla(e^{-i\alpha\theta_0}\varphi^0).
\]
Combining the last equality with \eqref{eq:differential_u_a_ext} we obtain a.e.
\[
( i\nabla + A_a ) u_a^{ext}=ie^{i\alpha\theta_a}\nabla(e^{-i\alpha\theta_0}\varphi^0)
\]
and hence $|(i\nabla +A_a) u_a^{ext}|\leq C |a|^{k/2-1}$ on $\partial D_{|a|}$ a.e., for some $C$ not depending on $|a|$, by \eqref{eq:asymptic_expansion_varphi_0} and \eqref{eq:asymptic_expansion_varphi_0_remainder}. Integrating on $\partial D_{|a|}$ we arrive at the same estimate as for $u_a^{int}$, that is
\begin{equation}\label{eq:estimate_normal_derivative_u_a_ext}
\|(i\nabla+A_a)u_a^{ext}\cdot\nu\|_{L^2(\partial D_{|a|})}\leq C|a|^\frac{k-1}{2}.
\end{equation}

{\bf Step 4:} proof of \eqref{eq:ga_epsa}. 
We test equation \eqref{eq:u_a_ext_equation} with a test function $\phi\in H^1_{A_a}(\Omega)$ and apply the formula of integration by parts to obtain
\[
\begin{split}
\int_{\Omega\setminus D_{|a|}} \left\{(i\nabla+A_a)u_a^{ext}\overline{(i\nabla+A_a)\phi} -\lambda^0 u_a^{ext}\bar{\phi} \right\}\,dx \\
= i\int_{\partial D_{|a|}} (i\nabla+A_a)u_a^{ext}\cdot\nu \bar{\phi}\,d\sigma.
\end{split}
\]
Similarly, equation \eqref{eq:u_a_int_equation} provides
\[
\begin{split}
\int_{D_{|a|}} \left\{(i\nabla+A_a)u_a^{int}\overline{(i\nabla+A_a)\phi} -\lambda^0 u_a^{int}\bar{\phi} \right\}\,dx = \\
-i\int_{\partial D_{|a|}} (i\nabla+A_a)u_a^{int}\cdot\nu \bar{\phi} \,d\sigma.
\end{split}
\]
Then, we test the equation in \eqref{eq:ua_definition} with $\phi$, we integrate by parts and we replace the previous equalities to get
\[
\int_\Omega g_a\bar\phi \,dx= i\int_{\partial D_{|a|}}(i\nabla+A_a)(u_a^{ext}-u_a^{int})\cdot\nu \bar\phi d\sigma.
\]
To the previous expression we apply first the H\"older inequality and then the estimates obtained in the previous steps \eqref{eq:estimate_normal_derivative_u_a_int} and \eqref{eq:estimate_normal_derivative_u_a_ext} to obtain
\begin{align*}
\left| \int_\Omega g_a\bar\phi \,dx\right|  \leq  
\|(i\nabla+A_a)u_a^{int}\cdot\nu\|_{L^2(\partial D_{|a|})} \|\phi\|_{L^2(\partial D_{|a|})} \\
+ \|(i\nabla+A_a)u_a^{ext}\cdot\nu\|_{L^2(\partial D_{|a|})} \|\phi\|_{L^2(\partial D_{|a|})}
\leq C |a|^{(k-1)/2} \|\phi\|_{L^2(\partial D_{|a|})}.
\end{align*}
Finally, Lemma \ref{lemma:uniformbound} provides the desired estimate on $g_a$.
Then we estimate $\epsilon_a$ as follows. Since $\|u_a^{ext}\|_{L^2(\Omega\setminus D_{|a|})}= \|\varphi^0\|_{L^2(\Omega\setminus D_{|a|})}$ we have
\begin{align}
\left| \|u_a\|_{L^2(\Omega)} -1\right| = \left| \|u_a^{int}\|_{L^2(D_{|a|})}^2- \|\varphi^0\|_{L^2(D_{|a|})}^2 \right| \leq C|a|^{k+2},
\end{align}
where in the last inequality we used the fact that $\|\varphi^0\|_{L^2(D_{|a|})}^2 \leq C|a|^{k+2}$ by \eqref{eq:asymptic_expansion_varphi_0} and \eqref{eq:asymptic_expansion_varphi_0_remainder}, and that $\|u_a^{int}\|_{L^2(D_{|a|})}^2=\|\psi_a^{int}\|_{L^2(D_{|a|})}^2 \leq C|a|^{k+2}$, by \eqref{eq:asymptic_expansion_varphi_0_L2}.

{\bf Step 5:} local inversion theorem. To conclude the proof we apply the Corollary \ref{corollary:local_inversion}. Let $\Psi$ be the function defined therein (recall that here $b=0$). The construction that we did in the previous steps ensures that
\begin{align*}
\Psi(a, \varphi^{a}\circ\Phi_a, \lambda^{a}) & = (a, 0, 0) \\
\Psi(a, u_{a}\circ\Phi_a, \lambda^{0}) & = (a, g_{a}\circ\Phi_a, \epsilon_{a}),
\end{align*}
with $g_{a}$, $\epsilon_{a}$ satisfying \eqref{eq:ga_epsa}. We proved in Theorem \ref{theorem:continuityinterior} that 
\[
|\lambda^a-\lambda^0|+\|\varphi^a\circ\Phi_a-\varphi^0\|_{H^1_{A_0}(\Omega)}\to0
\]
as $|a|\to0$. Moreover, it is not difficult to see that
\[
\|u_a\circ\Phi_a-\varphi^0\|_{H^1_{A_0}(\Omega)}\to0
\]
as $|a|\to0$. Hence the points $(a,\varphi^a\circ\Phi_a,\lambda^a)$ and $(a,u_a\circ\Phi_a,\lambda^0)$ are approaching $(0,\varphi^0,\lambda^0)$ in the space $\Omega\times H^1_{A_0}(\Omega)\times\R$ as $|a|\to0$. Since $\Psi$ admits an inverse of class $C^\infty$ in a neighbourhood of $(0,\varphi^0,\lambda^0)$ (recall that $\lambda^0$ is simple), we deduce that
\begin{align*}
\| (\varphi^{a}- u_{a})\circ\Phi_a \|_{H^{1}_{A_0}(\Omega)} + | \lambda^{a} - \lambda^{0} | \leq C( \|g_{a}\|_{(H^1_{A_a}(\Omega))'}+|\epsilon_a|)
\leq C |a|^{k/2},
\end{align*}
for some constant $C$ independent of $a$, which concludes the proof of the proposition.
\end{proof}

At this point we have proved the desired property only for pole $a$ belonging to the nodal lines of $\varphi^{0}$. We would like to extend this result to all $a$ sufficiently closed to $0$. We will proceed in the following way.
Thanks to Theorem \ref{theorem:differentiability_simple_eigenvalue}, we can consider the Taylor expansion of the function $a\mapsto \lambda^a$ in a neighbourhood of 0. Then Proposition \ref{proposition:comportmenteigenvalues} provides $k$ vanishing conditions, corresponding to the $k$ nodal lines of $\varphi^{0}$. Finally, we will use these conditions to show that in fact the first terms of the polynome are identically zero. Let us begin with a lemma on the existence and the form of the Taylor expansion.

\begin{lemma}\label{lemma:taylordevelopment}
If $\lambda^{0}$ is simple then the following expansion is valid for $a\in\Omega$ sufficiently close to 0 and for all $H \in \mathbb{N}$
\begin{align}\label{eq:taylordevelopment}
\lambda^{a} - \lambda^{0} = \sum_{h=1}^H |a|^h P_{h}(\vartheta(a)) + o(|a|^{H}),
\end{align}
where $a=|a|(\cos\vartheta(a),\sin\vartheta(a))$ and
\begin{align}\label{eq:memberoftaylordevelopment}
P_{h}(\vartheta) = \sum_{j=0}^{h} \beta_{j,h} \cos^{j}\vartheta \sin^{h-j}\vartheta
\end{align}
for some $\beta_{j,h}\in\R$ not depending on $|a|$.
\end{lemma}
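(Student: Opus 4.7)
The plan is to deduce this lemma directly from the smoothness statement in Theorem \ref{theorem:differentiability_simple_eigenvalue}. Since $\lambda^{0}$ is simple, that theorem gives a neighborhood $U\subset\Omega$ of $0$ on which $a\mapsto\lambda^{a}$ is of class $C^\infty$. In particular, for every $H\in\N$ the classical multivariate Taylor formula in the Cartesian variables $a=(a_1,a_2)$ applies, and yields
\[
\lambda^{a}-\lambda^{0} \;=\; \sum_{h=1}^{H}\,\sum_{j=0}^{h} \frac{1}{j!\,(h-j)!}\,\frac{\partial^{h}\lambda^{0}}{\partial a_{1}^{j}\,\partial a_{2}^{h-j}}\; a_{1}^{j}\,a_{2}^{h-j} \;+\; o(|a|^{H}),
\]
valid as $a\to 0$ in $U$.

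Now I would pass to polar coordinates. Writing $a_{1}=|a|\cos\vartheta(a)$ and $a_{2}=|a|\sin\vartheta(a)$, each monomial of total degree $h$ satisfies
\[
a_{1}^{j}\,a_{2}^{h-j} \;=\; |a|^{h}\,\cos^{j}\vartheta(a)\,\sin^{h-j}\vartheta(a),
\]
so that the homogeneous part of degree $h$ factors as $|a|^{h}$ times a trigonometric polynomial homogeneous of degree $h$ in $(\cos\vartheta,\sin\vartheta)$. Setting
\[
\beta_{j,h} \;:=\; \frac{1}{j!\,(h-j)!}\,\frac{\partial^{h}\lambda^{0}}{\partial a_{1}^{j}\,\partial a_{2}^{h-j}} \;\in\;\R,
\]
which are real constants independent of $a$, substituting into the Taylor expansion above gives exactly \eqref{eq:taylordevelopment} with $P_{h}$ of the form \eqref{eq:memberoftaylordevelopment}. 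The remainder $o(|a|^{H})$ is unaffected by the change of variables because $|\vartheta(a)|$ is bounded.

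There is essentially no obstacle here: the only non-trivial input is the $C^\infty$ regularity, which is already guaranteed by Theorem \ref{theorem:differentiability_simple_eigenvalue}. (Note that the partial derivatives $\partial_{a_1}^{j}\partial_{a_2}^{h-j}\lambda^{0}$ are real because $\lambda^{a}$ is real-valued, which also explains why the $\beta_{j,h}$ in \eqref{eq:memberoftaylordevelopment} can be taken in $\R$.) The point of writing the expansion in this ``polar'' form is precisely to be able, in the sequel, to exploit the $k$ vanishing directions coming from Proposition \ref{proposition:comportmenteigenvalues}: these give $k$ distinct values of $\vartheta$ at which $P_{h}(\vartheta)=0$ for each low-order $h$, which will be used to force the polynomials $P_{h}$ to vanish identically for $h\le (k-1)/2$.
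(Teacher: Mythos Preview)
Your proof is correct and follows essentially the same approach as the paper: invoke Theorem \ref{theorem:differentiability_simple_eigenvalue} to get $C^\infty$ regularity of $a\mapsto\lambda^a$ near $0$, write the Cartesian Taylor expansion with Peano remainder, define $\beta_{j,h}$ as the normalized partial derivatives at $0$, and substitute $a_1=|a|\cos\vartheta(a)$, $a_2=|a|\sin\vartheta(a)$. The additional remarks you give (reality of the coefficients, purpose of the polar form) are accurate but not needed for the lemma itself.
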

\begin{proof}
Since $\lambda^0$ is simple, $\lambda^a$ is also simple for $a$ sufficiently closed to 0.
Then we proved in Theorem \ref{theorem:differentiability_simple_eigenvalue} that $\lambda_{j}^a$ is $C^\infty$ in the variable $a$.
As a consequence, we can consider the first terms of the Taylor expansion, with Peano rest, of $\lambda_{j}^a$
\[
\lambda^a-\lambda^0=\sum_{h=1}^H \sum_{j=0}^h \frac{1}{j!(h-j)!}
\frac{\partial^h \lambda^a}{\partial^ja_1\partial^{h-j}a_2}\Big|_{a=0} a_1^j a_2^{h-j} + o(|a|^{H}),
\]
where $a=(a_1,a_2)$. Setting
\[
\beta_{j,h}=\frac{1}{j!(h-j)!}
\frac{\partial^h \lambda^a}{\partial^ja_1\partial^{h-j}a_2}\Big|_{a=0} 
\]
and $a_1=|a|\cos\vartheta(a)$, $a_2=|a|\sin\vartheta(a)$ the thesis follows.
\end{proof}

The following lemma tells us that on the $k$ nodal lines of $\varphi^{0}$, the first low-order polynomes cancel.

\begin{lemma} \label{lemma:annulationpolynomeonnodallines}
Suppose that $\lambda^0$ is simple and that $\varphi^0$ has a zero of order $k/2$ at $0$, with $k\geq3$ odd. Then there exist an angle $\tilde\vartheta \in [0,2\pi)$ and non-negative quantities $\varepsilon_0,\ldots,\varepsilon_{k-1}$ arbitrarily small such that
\[
P_h\left(\tilde{\vartheta}+\frac{2\pi l}{k}+\varepsilon_l\right)=0 \quad \text{for every integers } l\in [0,k-1], h\in \left[1,\frac{k-1}{2}\right]
\]
where $P_h$ is defined in \eqref{eq:memberoftaylordevelopment}.
\end{lemma}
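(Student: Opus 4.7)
The plan is to use the $k$ nodal arcs of $\varphi^0$ at $0$ (provided by Proposition~\ref{proposition:asymptotic_expansion_eigenfunction}) to produce, via Proposition~\ref{proposition:comportmenteigenvalues} and Lemma~\ref{lemma:taylordevelopment}, $k$ distinct zeros for each polynomial $P_h$ with $h\leq (k-1)/2$, and then to exploit that $P_h$ viewed as a trigonometric polynomial has degree at most $2h<k$ to force $P_h\equiv 0$ identically; once this is established the stated equation holds trivially for any non-negative $\varepsilon_l$.

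More precisely, Proposition~\ref{proposition:asymptotic_expansion_eigenfunction} applied at $b=0$ furnishes $k$ smooth arcs $\Gamma_0,\ldots,\Gamma_{k-1}$ comprising the nodal set of $\varphi^0$ near $0$, whose tangent rays at $0$ divide the disk into $k$ equal sectors and hence point in directions $\tilde\vartheta_l:=\tilde\vartheta+2\pi l/k$ for some $\tilde\vartheta\in[0,2\pi)$. For each small $r>0$ and each $l$, let $a_l(r)$ be the point of $\Gamma_l$ at distance $r$ from $0$ and write its angular coordinate as $\tilde\vartheta_l+\varepsilon_l(r)$, with $\varepsilon_l(r)\to 0$ as $r\to 0^+$ by the $C^\infty$ regularity of $\Gamma_l$.

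Now Proposition~\ref{proposition:comportmenteigenvalues} gives $|\lambda^{a_l(r)}-\lambda^0|\leq Cr^{k/2}$, while Lemma~\ref{lemma:taylordevelopment} at order $H=(k-1)/2$ provides
\[
\lambda^{a_l(r)}-\lambda^0 \;=\; \sum_{h=1}^{(k-1)/2} r^h\, P_h\bigl(\tilde\vartheta_l+\varepsilon_l(r)\bigr) \;+\; o\bigl(r^{(k-1)/2}\bigr).
\]
Since $k/2>(k-1)/2$, comparing these two yields the central identity
\[
\sum_{h=1}^{(k-1)/2} r^h\, P_h\bigl(\tilde\vartheta_l+\varepsilon_l(r)\bigr) \;=\; o\bigl(r^{(k-1)/2}\bigr)\qquad\text{as }r\to 0^+,
\]
valid for every $l\in\{0,\ldots,k-1\}$. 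From this I would extract, by finite induction on $h$, that $P_h\equiv 0$ for $h=1,\ldots,(k-1)/2$. Dividing the central identity by $r$ and letting $r\to 0^+$, each tail term is $O(r)$, so $P_1(\tilde\vartheta_l+\varepsilon_l(r))\to 0$, hence $P_1(\tilde\vartheta_l)=0$ by continuity. Next, substituting $z=e^{i\vartheta}$ realizes $P_h$ as $z^{-h}$ times a polynomial in $z$ of degree at most $2h$, so $P_h$ has at most $2h$ zeros on $[0,2\pi)$ unless it is identically zero; since $P_1$ has $k\geq 3>2$ distinct zeros, $P_1\equiv 0$. Inductively, once $P_{h'}\equiv 0$ is known for all $h'<h$, the central identity reduces to a tail starting at $r^h$; division by $r^h$ and passage to the limit yields $P_h(\tilde\vartheta_l)=0$ for every $l$, and the degree bound $2h\leq k-1<k$ then forces $P_h\equiv 0$.

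The main obstacle is the bookkeeping in the inductive step: merely knowing $P_{h'}(\tilde\vartheta_l)=0$ is insufficient to discard the lower-order contributions after dividing by $r^h$, because $P_{h'}(\tilde\vartheta_l+\varepsilon_l(r))$ is only $o(1)$ and could in principle decay more slowly than $r^{h-h'}$. The degree-based algebraic argument, which upgrades $k$ zeros at each stage into identical vanishing of $P_{h'}$, is precisely what permits the induction to close. Once all the $P_h$ with $h\leq (k-1)/2$ are shown to vanish identically, the conclusion of the lemma holds trivially with $\tilde\vartheta$ as above and any arbitrarily small non-negative $\varepsilon_l$.
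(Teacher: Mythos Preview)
Your argument is correct, and in fact proves more than the lemma asks: you establish $P_h\equiv 0$ for all $h\leq (k-1)/2$, from which the stated vanishing at the specific angles follows trivially. The paper proceeds differently. Its proof of this lemma only extracts the $k$ vanishing conditions $P_h(\vartheta(a_l))=0$ at the angles of the chosen nodal points, arguing by contradiction with Proposition~\ref{proposition:comportmenteigenvalues}; it then defers the conclusion $P_h\equiv 0$ to the separate Lemma~\ref{lemma:annulationpolynome}, which proceeds by an inductive case analysis on whether some $\sin$ or $\cos$ factor vanishes. Your route replaces that case analysis by the cleaner observation that $z^h P_h$ is a polynomial of degree at most $2h$ in $z=e^{i\vartheta}$, so $k>2h$ distinct zeros on the circle force $P_h\equiv 0$. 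This also lets you handle rigorously the bookkeeping issue you flag---that the nodal angles $\tilde\vartheta_l+\varepsilon_l(r)$ vary with $r$, so knowing only $P_{h'}(\tilde\vartheta_l)=0$ would not suffice to drop the lower-order terms---by upgrading to identical vanishing at each stage; the paper's two-lemma split leaves this point somewhat implicit. Your approach is shorter and effectively subsumes Lemma~\ref{lemma:annulationpolynome}; the paper's separation has the minor virtue of isolating the purely algebraic step from the analytic input.
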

\begin{proof}
We know from Proposition \ref{proposition:asymptotic_expansion_eigenfunction} that $\varphi^0$ has $k$ nodal lines with endpoint at 0, which we denote $\Gamma_l$, $l=0,\ldots, k-1$. Take points $a_l \in \Gamma_l$,  $l=0,\ldots, k-1$, satisfying $|a_0|=\ldots=|a_{k-1}|$ and denote
\[
a_l=|a_l|(\cos\vartheta(a_l),\sin\vartheta(a_l)).
\]
First we claim that
\[
P_h(\vartheta(a_l))=0 \quad\text{for every integers } l\in [0,k-1], h\in \left[1,\frac{k-1}{2}\right].
\]
Indeed, suppose by contradiction that this is not the case for some $l,h$ belonging to the intervals defined above. Then for such $l,h$ the following holds by Lemma \ref{lemma:taylordevelopment}
\[
\lambda^{a_l}-\lambda^0 = C |a_l|^h + o(|a_l|^h) \quad\text{for some } C\neq0.
\]
On the other hand we proved in Proposition \ref{proposition:comportmenteigenvalues} that there exists $C>0$ independent of $a$ such that, for every $l=0,\ldots,k-1$, we have
\[
|\lambda^{a_l}-\lambda^0|\leq C |a_l|^{k/2} \quad\text{as } |a_l|\to0.
\]
This contradicts the last estimate because $h\leq (k-1)/2$, so that the claim is proved.

Finally setting $\tilde{\vartheta}:=\vartheta(a_0)$, Proposition \ref{proposition:asymptotic_expansion_eigenfunction} implies
\[
\vartheta(a_l)=\tilde{\vartheta}+\frac{2\pi l}{k}+\varepsilon_l \qquad l=1,\ldots,k-1
\]
with $\varepsilon_l\to0$ as $|a_l|\to0$.
\end{proof}

The next lemma extends this the previous property to all $a$ close to $0$.

\begin{lemma} \label{lemma:annulationpolynome}
Fix $k\geq3$ odd. For any integer $h\in [1,(k-1)/2]$ consider any polynom of the form
\begin{align}\label{eq:polynom_P_h}
P_{h}(\vartheta) = \sum_{j=0}^{h} \beta_{j,h} \cos^{j}\vartheta \sin^{h-j}\vartheta,
\end{align}
with $\beta_{j,h}\in\R$. Suppose that there exist $\tilde{\vartheta}\in [0,2\pi)$ and $\varepsilon_0,\ldots,\varepsilon_{k-1}$ satisfying $0\leq\varepsilon_l\leq\pi/(4k)$ such that
\[
P_h\left(\tilde{\vartheta}+\frac{2\pi l}{k}+\varepsilon_l\right)=0 \quad \text{for every integer }
l\in[0,k-1].
\]
Then $P_h\equiv 0$.
\end{lemma}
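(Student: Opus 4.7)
The plan is to view $P_h$ as a trigonometric polynomial and count its zeros. First, using the Euler formulas
\[
\cos\vartheta=\frac{e^{i\vartheta}+e^{-i\vartheta}}{2},\qquad
\sin\vartheta=\frac{e^{i\vartheta}-e^{-i\vartheta}}{2i},
\]
each monomial $\cos^{j}\vartheta\sin^{h-j}\vartheta$ is a Laurent polynomial in $z=e^{i\vartheta}$ of the form $z^{-h}\cdot(\text{polynomial of degree }2h\text{ in }z)$. Summing over $j$, I obtain a polynomial $Q(z)$ of degree at most $2h$ such that
\[
P_{h}(\vartheta)=e^{-ih\vartheta}Q(e^{i\vartheta}).
\]
Equivalently, $P_h$ is a real trigonometric polynomial of degree at most $h$.

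Next I would show that the $k$ prescribed zeros of $P_h$ are distinct modulo $2\pi$. Setting $\vartheta_{l}=\tilde\vartheta+2\pi l/k+\varepsilon_{l}$ for $l=0,\dots,k-1$, the condition $0\le\varepsilon_{l}\le\pi/(4k)$ places each $\vartheta_{l}$ (modulo $2\pi$) inside the pairwise disjoint arcs $[\tilde\vartheta+2\pi l/k,\ \tilde\vartheta+2\pi l/k+\pi/(4k)]$ of length $\pi/(4k)$, which are separated by gaps of length $2\pi/k-\pi/(4k)>0$. Hence the $\vartheta_{l}$ yield $k$ distinct values of $z_{l}:=e^{i\vartheta_{l}}$ on the unit circle, and all of them are roots of $Q$.

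Finally I conclude by a degree count. Since $h\le(k-1)/2$, we have $2h\le k-1<k$, so $Q$ is a polynomial of degree at most $2h<k$ with at least $k$ distinct roots; therefore $Q\equiv0$, and consequently $P_{h}\equiv0$.

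The argument is essentially self-contained and elementary; the only point that requires care is the bookkeeping in passing from $P_h$ to the polynomial $Q$, in particular making sure the exponent bound $2h$ is sharp and correctly exploits the hypothesis $h\le(k-1)/2$. The distinctness of the $k$ zeros, which uses the smallness assumption on the $\varepsilon_l$, is the other place where the quantitative hypothesis enters and should not be overlooked.
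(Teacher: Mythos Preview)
Your argument is correct and is genuinely different from the paper's proof. The paper proceeds by induction on $h$: for $h=1$ it writes down the $k$ linear equations on $(\beta_0,\beta_1)$, handles separately the cases where some $\sin(\tilde\vartheta+2\pi l/k+\varepsilon_l)$ or $\cos(\tilde\vartheta+2\pi l/k+\varepsilon_l)$ vanishes, and uses the oddness of $k$ to rule out two of the $k$ angles having sine simultaneously equal to zero; the inductive step factors $P_{h+1}(\vartheta)=\sin\vartheta\,P_h(\vartheta)+\beta_{h+1}\cos^{h+1}\vartheta$ and repeats the same case analysis to reduce to $k-1$ conditions on $P_h$.

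Your route bypasses both the induction and the case analysis. Writing $P_h(\vartheta)=e^{-ih\vartheta}Q(e^{i\vartheta})$ with $\deg Q\le 2h$ turns the problem into a straight root count for a polynomial in one complex variable, and the smallness bound $0\le\varepsilon_l\le\pi/(4k)$ is used once, cleanly, to guarantee that the $k$ points $e^{i\vartheta_l}$ are distinct. This is shorter, avoids the delicate parity argument altogether, and in fact shows the lemma holds for any $k$ (odd or even) satisfying $2h<k$; the paper's proof genuinely needs $k$ odd at the step where it excludes $l'-l$ from being an integer near $k/2$. The only thing the paper's approach buys is that it stays entirely within real trigonometry, but your use of $z=e^{i\vartheta}$ is elementary and the gain in clarity is substantial.
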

\begin{proof}
We prove the result by induction on $h$. 

{\bf Step 1} Let $h=1$, then
\[
P_1(\vartheta)=\beta_0\sin\vartheta+\beta_1\cos\vartheta
\]
and the following conditions hold for $l=0,\ldots,k-1$:
\begin{equation}\label{eq:system_polynome_P_h}
\beta_0\sin\left(\tilde{\vartheta}+\frac{2\pi l}{k}+\varepsilon_l\right)+ \beta_1\cos\left(\tilde{\vartheta}+\frac{2\pi l}{k}+\varepsilon_l\right)=0.
\end{equation}
In case for every $l=0,\ldots,k-1$ we have
\[
\sin\left(\tilde{\vartheta}+\frac{2\pi l}{k}+\varepsilon_l\right)\neq 0 \quad \text{ and } \quad
\cos\left(\tilde{\vartheta}+\frac{2\pi l}{k}+\varepsilon_l\right) \neq 0,
\]
system \eqref{eq:system_polynome_P_h} has two unknowns $\beta_0,\beta_1$ and $k\geq3$ linearly independent equations. Hence in this case $\beta_0=\beta_1=0$ and $P_1\equiv0$. In case there exists $l$ such that
\[
\sin\left(\tilde{\vartheta}+\frac{2\pi l}{k}+\varepsilon_l\right) = 0
\]
then of course $\cos\left(\tilde{\vartheta}+2\pi l/k+\varepsilon_l\right) \neq 0$, which implies $\beta_1=0$. We claim that in this case
\begin{equation}\label{eq:sin_non_zero}
\sin\left(\tilde{\vartheta}+\frac{2\pi l'}{k}+\varepsilon_{l'}\right) \neq 0
\end{equation}
for every integer $l'\in [0,k-1]$ different from $l$. To prove the claim we proceed by contradiction. We can suppose without loss of generality that
\[
\tilde{\vartheta}+\frac{2\pi l}{k}+\varepsilon_l=0 \quad\text{and}\quad \tilde{\vartheta}+\frac{2\pi l'}{k}+\varepsilon_{l'}=\pi.
\]
Then
\[
l=-\frac{k}{2\pi}(\tilde{\vartheta}+\varepsilon_l) \quad\text{and}\quad
l'=\frac{k}{2\pi}(\pi-\tilde{\vartheta} - \varepsilon_{l'})
\]
so that
\[
l'-l=\frac{k}{2}+k\frac{\varepsilon_l-\varepsilon_{l'}}{2\pi}.
\]
The assumption $0\leq\varepsilon_l\leq\pi/(4k)$ implies
\[
\frac{k}{2}-\frac{1}{4}\leq l'-l \leq \frac{k}{2}+\frac{1}{4}.
\]
Being $k\geq3$ an odd integer, the last estimate provides $l'-l\not\in\N$, which is a contradiction. Therefore we have proved \eqref{eq:sin_non_zero}. Now consider any of the equations in \eqref{eq:system_polynome_P_h} for $l'\neq l$. Inserting the information $\beta_1=0$ and \eqref{eq:sin_non_zero} we get $\beta_0=0$ and hence $P_1\equiv0$. In case one of the cosine vanishes one can proceed in the same way, so we have proved the basis of the induction. 

{\bf Step 2} Suppose that the statement is true for some $h\leq (k-3)/2$ and let us prove it for $h+1$. The following conditions hold for $l=0,\ldots,k-1$
\begin{equation}
\sum_{j=0}^{h+1} \beta_j \cos^{j}\left(\tilde{\vartheta}+\frac{2\pi l}{k}+\varepsilon_l\right) \sin^{h+1-j}\left(\tilde{\vartheta}+\frac{2\pi l}{k}+\varepsilon_l\right)=0.
\end{equation}
We can proceed similarly to Step 1. If none of the sinus, cosinus vanishes then we have a system with $h+2\leq(k+1)/2$ unknowns and $k$ linearly independent equations, hence $P_{h+1}\equiv0$. Otherwise suppose that there exists $l$ such that
\[
\sin\left(\tilde{\vartheta}+\frac{2\pi l}{k}+\varepsilon_l\right) = 0.
\]
Then we saw in Step 1 that
\[
\cos\left(\tilde{\vartheta}+\frac{2\pi l}{k}+\varepsilon_l\right) \neq 0 \quad \text{ and }\quad \sin\left(\tilde{\vartheta}+\frac{2\pi l'}{k}+\varepsilon_{l'}\right) \neq 0
\]
for every integer $l'\in [0,k-1]$ different from $l$. By rewriting $P_{h+1}$ in the following form
\[
P_{h+1}(\vartheta)=\sin\vartheta P_h(\vartheta)+\beta_{h+1}\cos^{h+1}\vartheta, 
\]
with $P_h$ as in \eqref{eq:polynom_P_h}, we deduce both that $\beta_{h+1}=0$ and that
\[
P_h\left(\tilde{\vartheta}+\frac{2\pi l'}{k}+\varepsilon_{l'}\right)=0
\]
for every $l'\in [0,k-1]$ different from $l$. These are $k-1$ conditions for a polynome of order $h\leq(k-3)/2$, so the induction hypothesis implies $P_h\equiv0$ and in turn $P_{h+1}\equiv0$.
\end{proof}

\begin{proof}[End of the proof of Theorem \ref{theorem:comportmentofeigenvelue}]
Take any $a\in\Omega$ sufficiently close to 0, then by Lemma \ref{lemma:taylordevelopment}
\[
\lambda^{a} - \lambda^{0} = \sum_{h=1}^H|a|^h P_{h}(\vartheta(a)) + o(|a|^{H}).
\]
By combining Lemmas \ref{lemma:annulationpolynomeonnodallines} and \ref{lemma:annulationpolynome} we obtain that $P_h\equiv0$ for every $h\in [1,(k-1)/2]$, therefore $|\lambda^a-\lambda^0|\leq C |a|^{(k+1)/2}$ for some constant $C$ independent of $a$.
\end{proof}

\section{Numerical illustration\label{sec.numeric}}
Let us now illustrate  some results of this paper using  the Finite Element Library \cite{Melina} with isoparametric $\mathbb P_{6}$ lagrangian elements. We will restrict our attention to the case of half-integer circulation $\alpha=1/2$.

The numerical method we used here was presented in details in \cite{BH}. Given a domain $\Omega$ and a point $a\in\Omega$, to compute the eigenvalues $\lambda_{j}^a$ of the Aharonov-Bohm operator $(i\nabla+A_{a})^2$ on $\Omega$, we compute those of the Dirichlet Laplacian on the double covering $\Omega^{\mathcal R}_{a}$ of $\Omega\setminus\{a\}$, denoted by $\mu^{\mathcal R}_{j}$. This spectrum of the Laplacian on $\Omega^{\mathcal R}_{a}$ is decomposed in two disjoint parts:
\begin{itemize}
\item the spectrum of the Dirichlet Laplacian on $\Omega$, $\lambda_{j}$,
\item the spectrum of the magnetic Schr\"odinger operator $(i\nabla+A_{a})^2$, $\lambda_{j}^a$.
\end{itemize}
Thus we have
\[
\{\mu_{j}^{\mathcal R})_{j\geq 1}=\{\lambda^a_{j}\}_{j\geq 1} \bigsqcup \{\lambda_{j}\}_{j\geq 1}.
\]
Therefore by computing the spectrum of the Dirichlet Laplacian on $\Omega$ and, for every $a\in\Omega$, that on the double covering $\Omega^{\mathcal R}_{a}$, we deduce the spectrum of the Aharonov-Bohm operator $(i\nabla+A_{a})^2$ on $\Omega$.
This method avoids to deal with the singularity of the magnetic potential and furthermore allows to work with real valued functions. We have just to compute the spectrum of the Dirichlet Laplacian, which is quite standard. The only effort to be done is to mesh a double covering domain.

Let us now present the computations for the angular sector of aperture $\pi/4$:
\[
\Sigma_{\pi/4}=\left\{(x_1,x_2)\in \mathbb R^2,\ x_1>0,\ |x_2|< x_1\tan\frac\pi8,\ x_1^2+x_2^2< 1\right\}.
\]
By symmetry, it is enough to compute the spectrum for $a$ in the half-domain. 
We take a discretization grid of step $1/N$ with $N=100$ or $N=1000$:
\[
a\in \Pi_{N}:=\left\{\left(\frac mN,\frac nN\right), 0<m<N, 0< \frac{|n|}m<\tan\frac\pi8,  \frac{m^2+n^2}{N^2}<1\right\}.
\]
Figure~\ref{fig.AB1-9} gives the first nine eigenvalues $\lambda_{j}^a$ for $a\in \Pi_{100}$. 
In these figures, the angular sector is represented by a dark thick line. Outside the angular sector are represented the eigenvalues $\lambda_{j}$ of the Dirichlet Laplacian on $\Sigma_{\pi/4}$ (which do not depend on $a$). We observe the convergence proved in Theorem~\ref{theorem:continuity}:
\[
\forall j\geq1,\quad \lambda_{j}^a \to \lambda_{j}\quad\mbox{ as }a\to \partial{\Sigma}_{\pi/4}.
\]
In Figure~\ref{fig.VPpi4ABbis}, we provide the 3-D representation of Figures~\ref{fig.AB1-9a} and \ref{fig.AB1-9b}.

\begin{figure}[h!t]
\begin{center}
\subfigure[$a\mapsto\lambda_{1}^a$\label{fig.AB1-9a}]{\includegraphics[height=3cm]{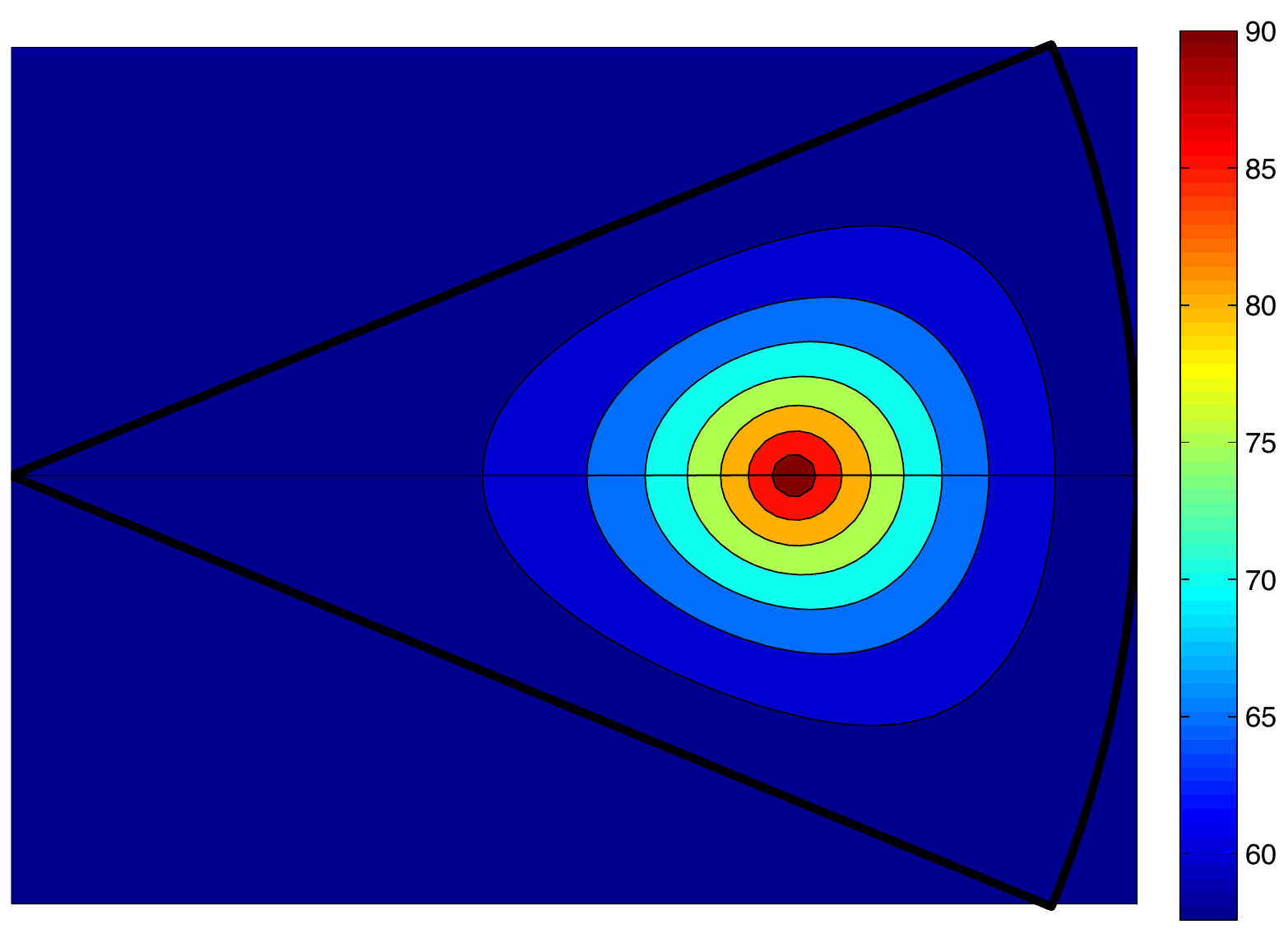}}
\subfigure[$a\mapsto\lambda_{2}^a$\label{fig.AB1-9b}]{\includegraphics[height=3cm]{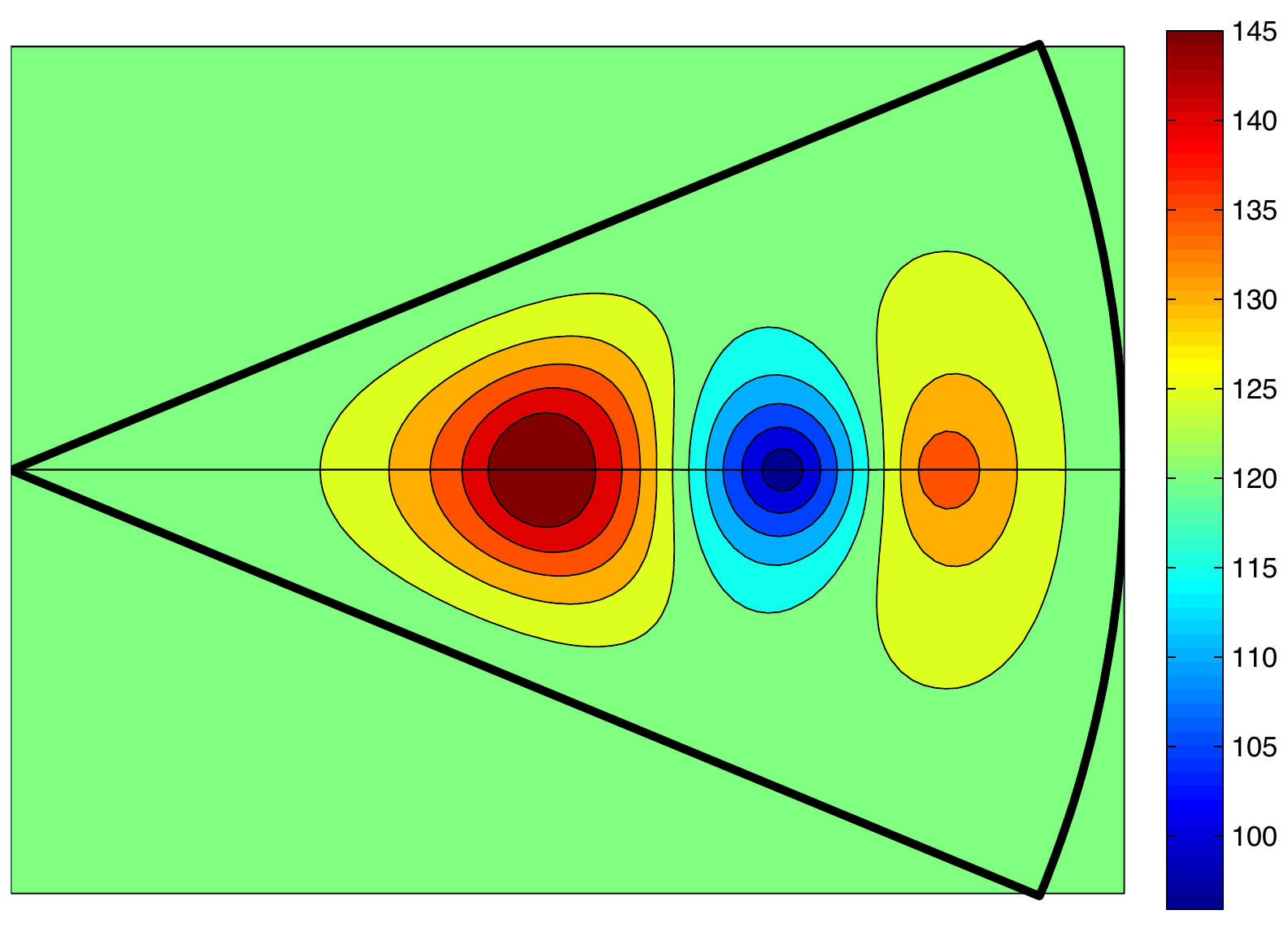}}
\subfigure[$a\mapsto\lambda_{3}^a$]{\includegraphics[height=3cm]{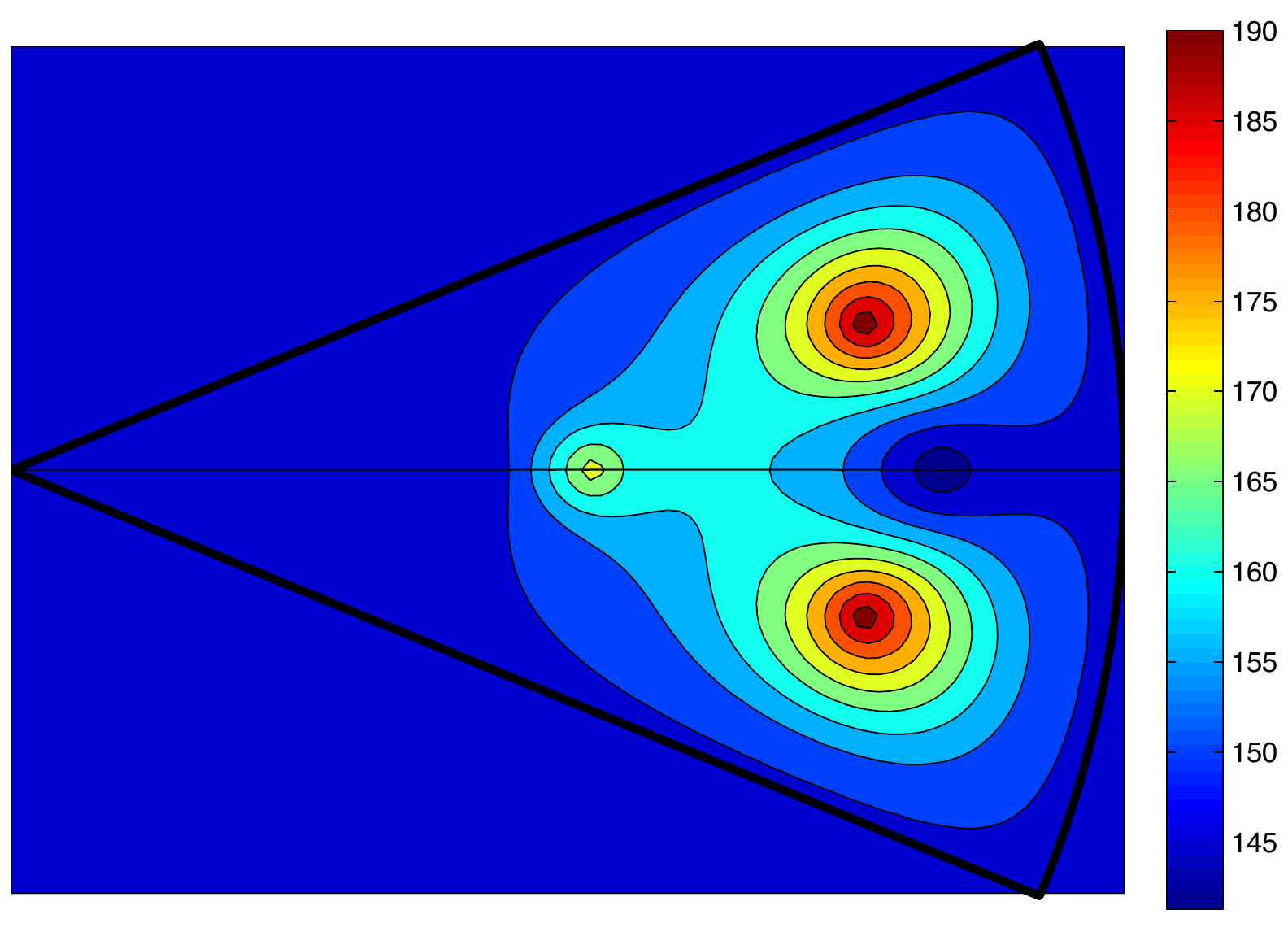}}\\
\subfigure[$a\mapsto\lambda_{4}^a$]{\includegraphics[height=3cm]{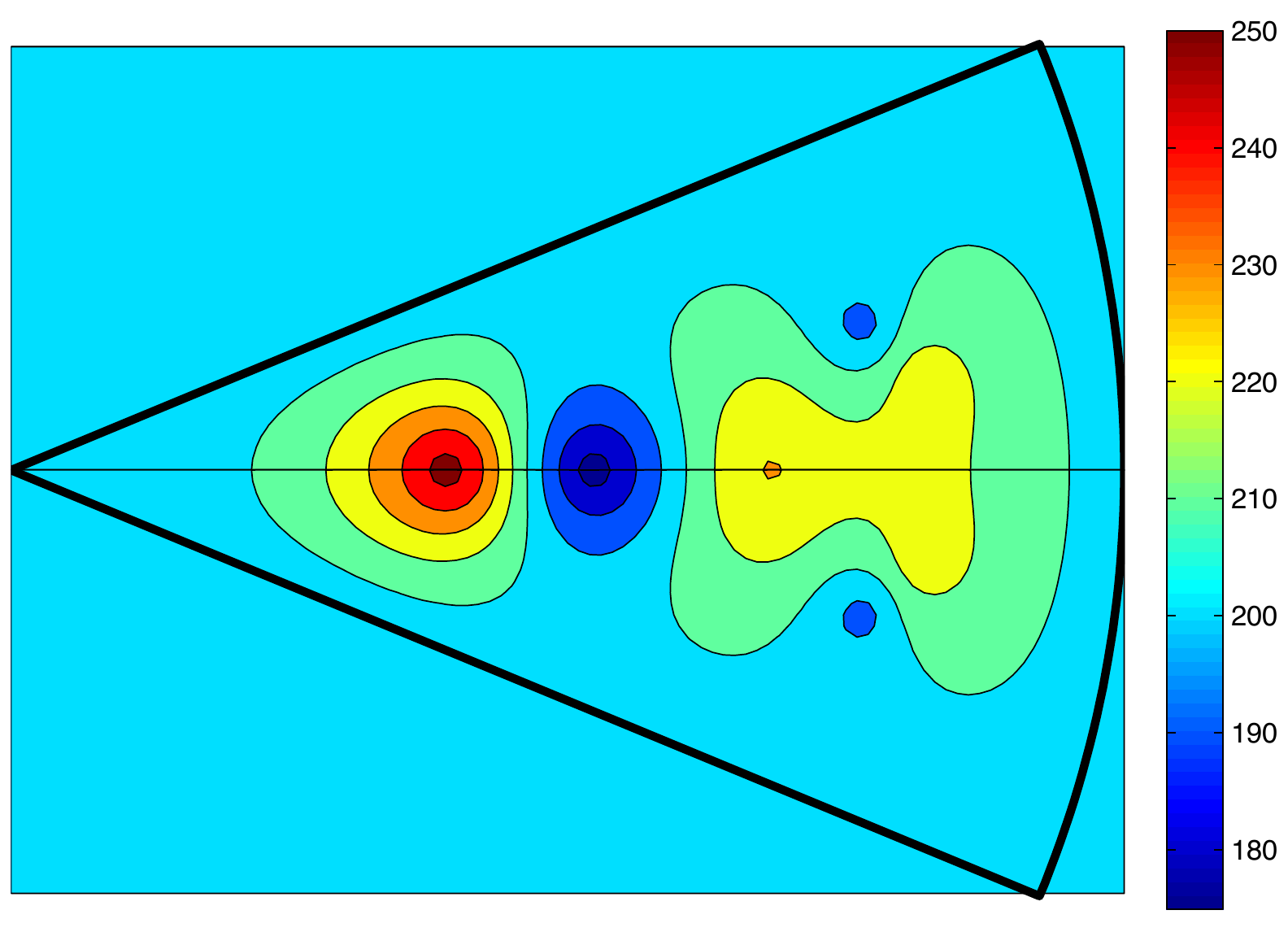}}
\subfigure[$a\mapsto\lambda_{5}^a$]{\includegraphics[height=3cm]{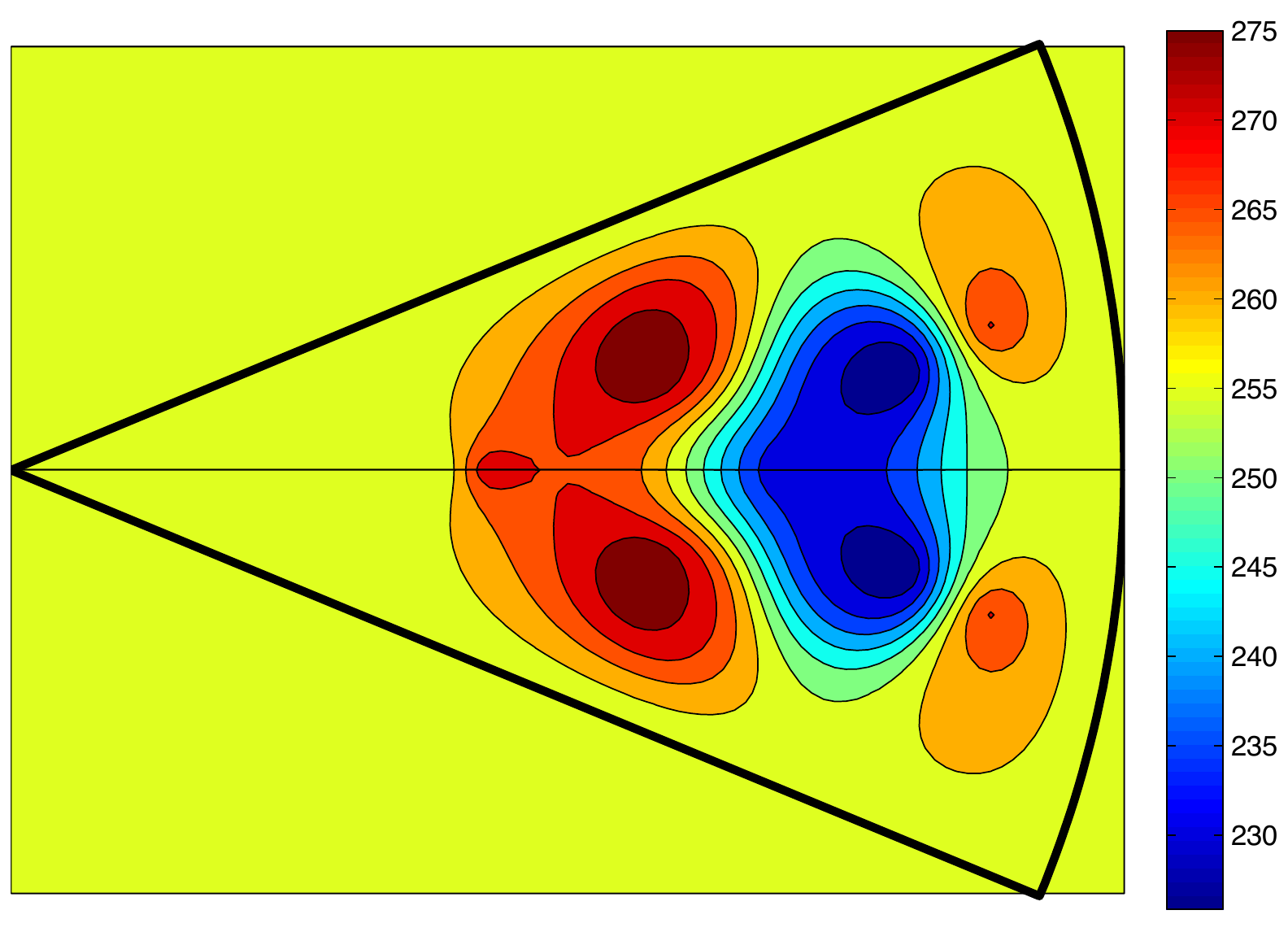}}
\subfigure[$a\mapsto\lambda_{6}^a$]{\includegraphics[height=2.99cm]{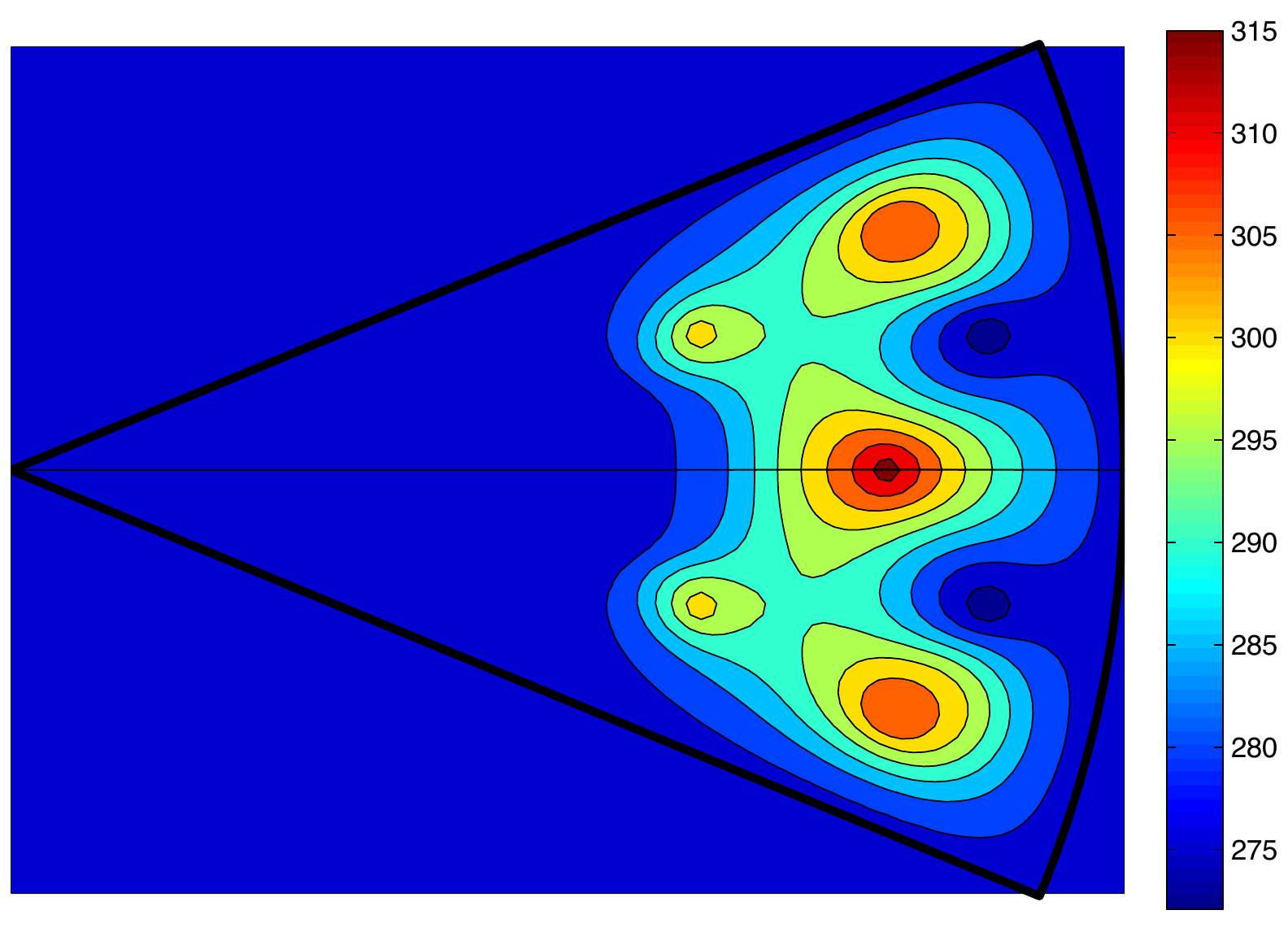}}\\
\subfigure[$a\mapsto\lambda_{7}^a$]{\includegraphics[height=3cm]{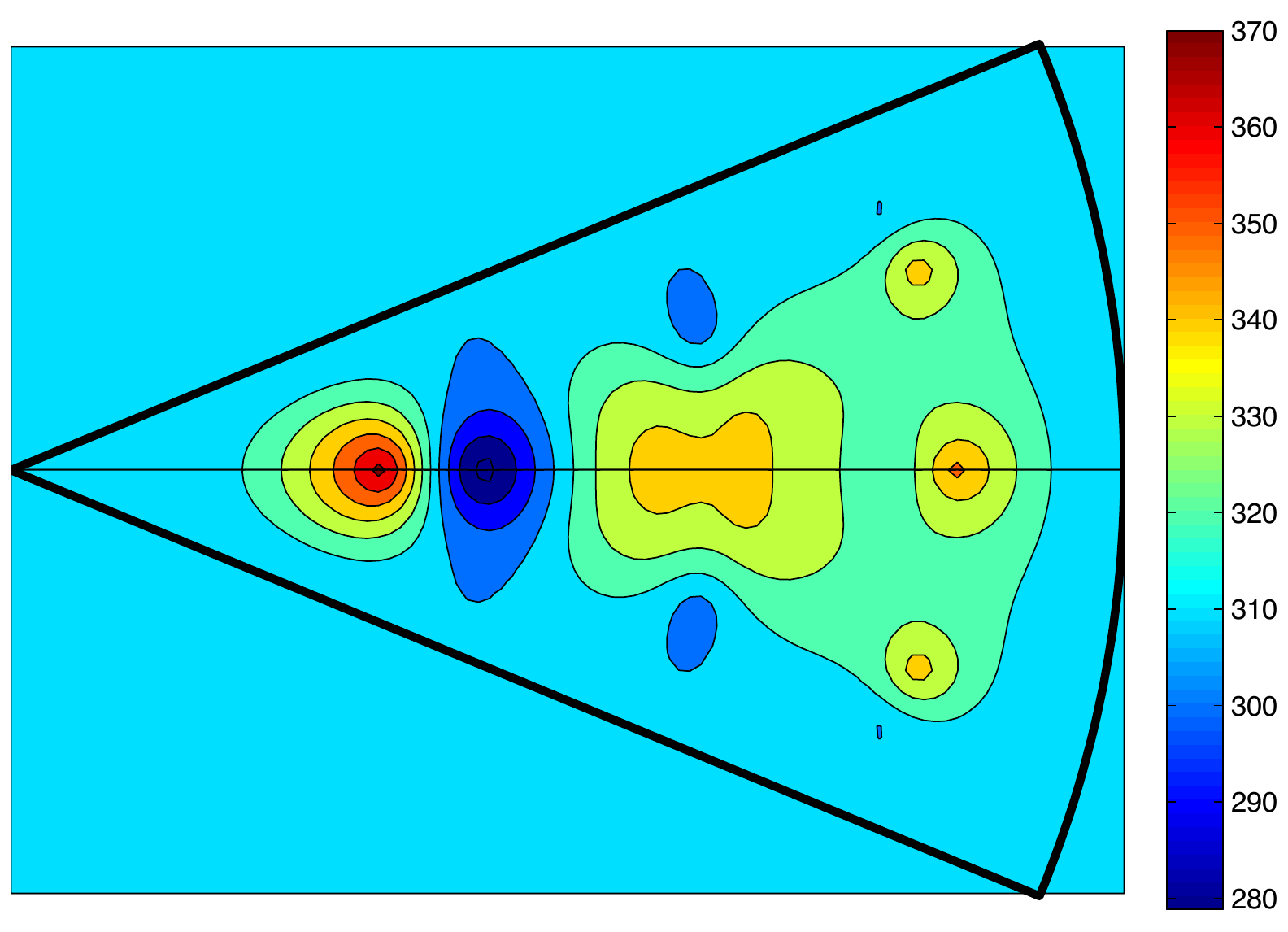}}
\subfigure[$a\mapsto\lambda_{8}^a$]{\includegraphics[height=3cm]{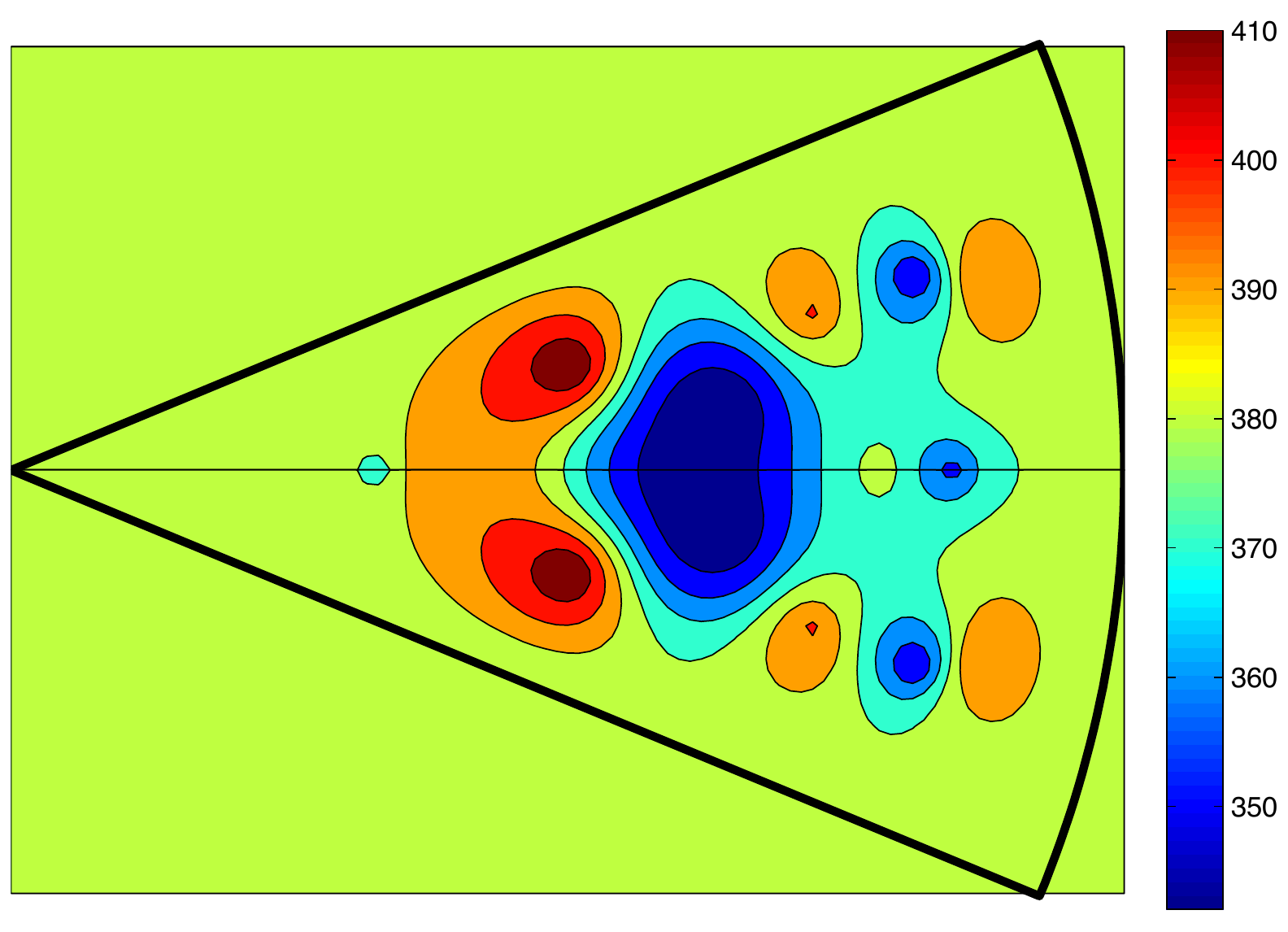}}
\subfigure[$a\mapsto\lambda_{9}^a$]{\includegraphics[height=2.99cm]{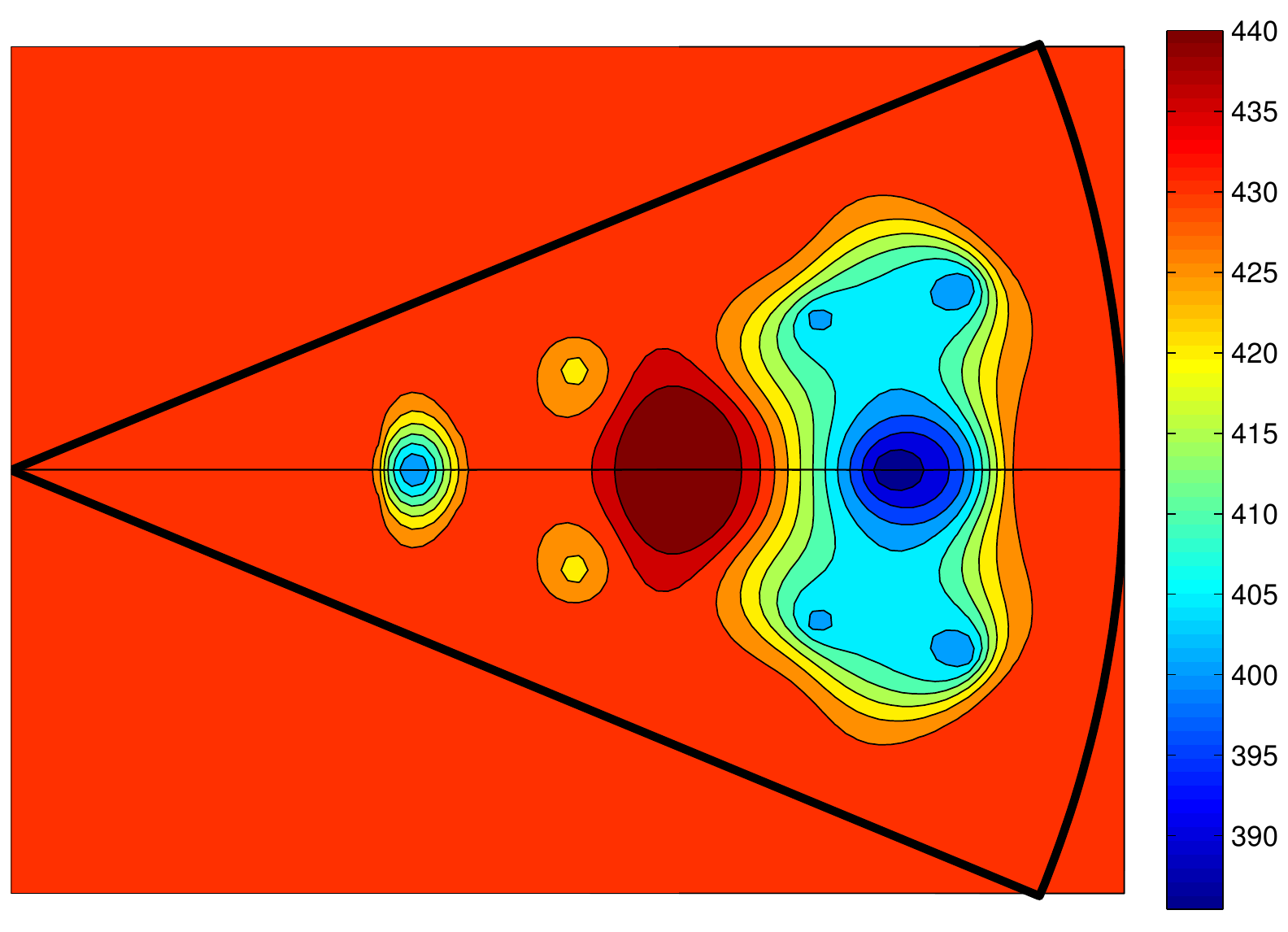}}
\caption{First nine eigenvalues of $(i\nabla+A_{a})^2$ in $\Sigma_{\pi/4}$, $a\in \Pi_{100}$.\label{fig.AB1-9}}
\end{center}
\end{figure}

Let us now deal more accurately with the singular points on the symmetry axis. Numerically, we take a discretization step equal to $1/1000$ and consider $a\in \{(m/1000,0), 1\leq m\leq 1000\}$. Figure~\ref{fig.VPABy0} gives the first nine eigenvalues of the Aharonov-Bohm operator $(i\nabla+A_{a})^2$ in $\Sigma_{\pi/4}$. Here we can identify the points $a$ belonging to the symmetry axis such that $\lambda_j^a$ is not simple. If we look for example at the first and second eigenvalues, we see that they are not simple respectively for one and three values of $a$ on the symmetry axis. At such values, the function $a\mapsto \lambda_j^a$, $j=1,2$, is not differentiable, as can be seen in Figure~\ref{fig.VPpi4ABbis}. Figure~\ref{fig.VPpi4ABbis} illustrates Theorem~\ref{theorem:differentiability_simple_eigenvalue} for a domain with a piecewise $C^\infty$ boundary: we see that the function $a\mapsto \lambda_{j}^a$, $j=1,2$, is regular except at the points where the eigenvalue $\lambda_{j}^a$ is not simple. 

\begin{figure}[h!t]
\begin{center}
\subfigure[$a\mapsto\lambda_{1}^a$, $a\in \Pi_{100}$]{\includegraphics[height=4.5cm]{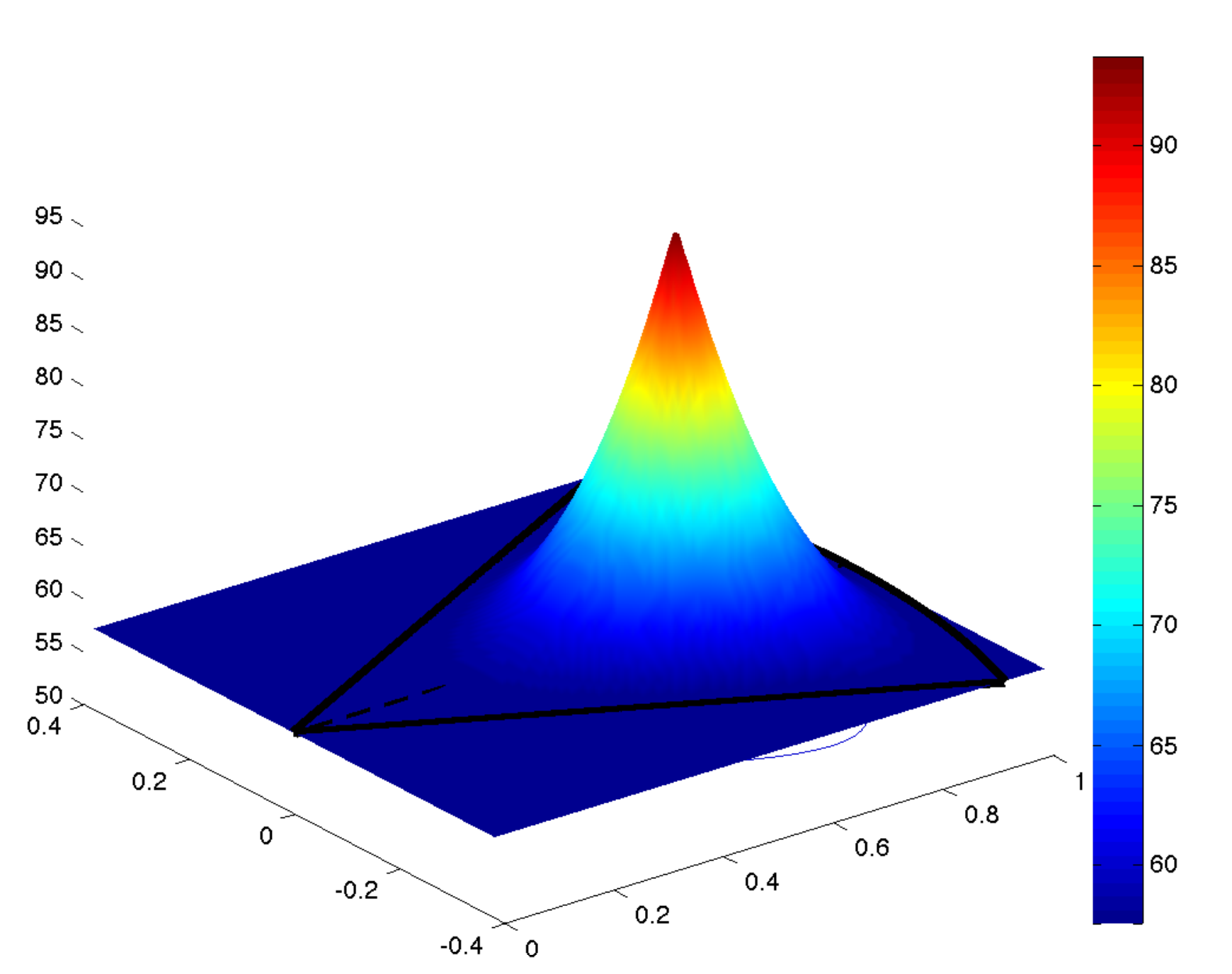}}
\subfigure[$a\mapsto\lambda_{2}^a$, $a\in \Pi_{100}$]{\includegraphics[height=4.5cm]{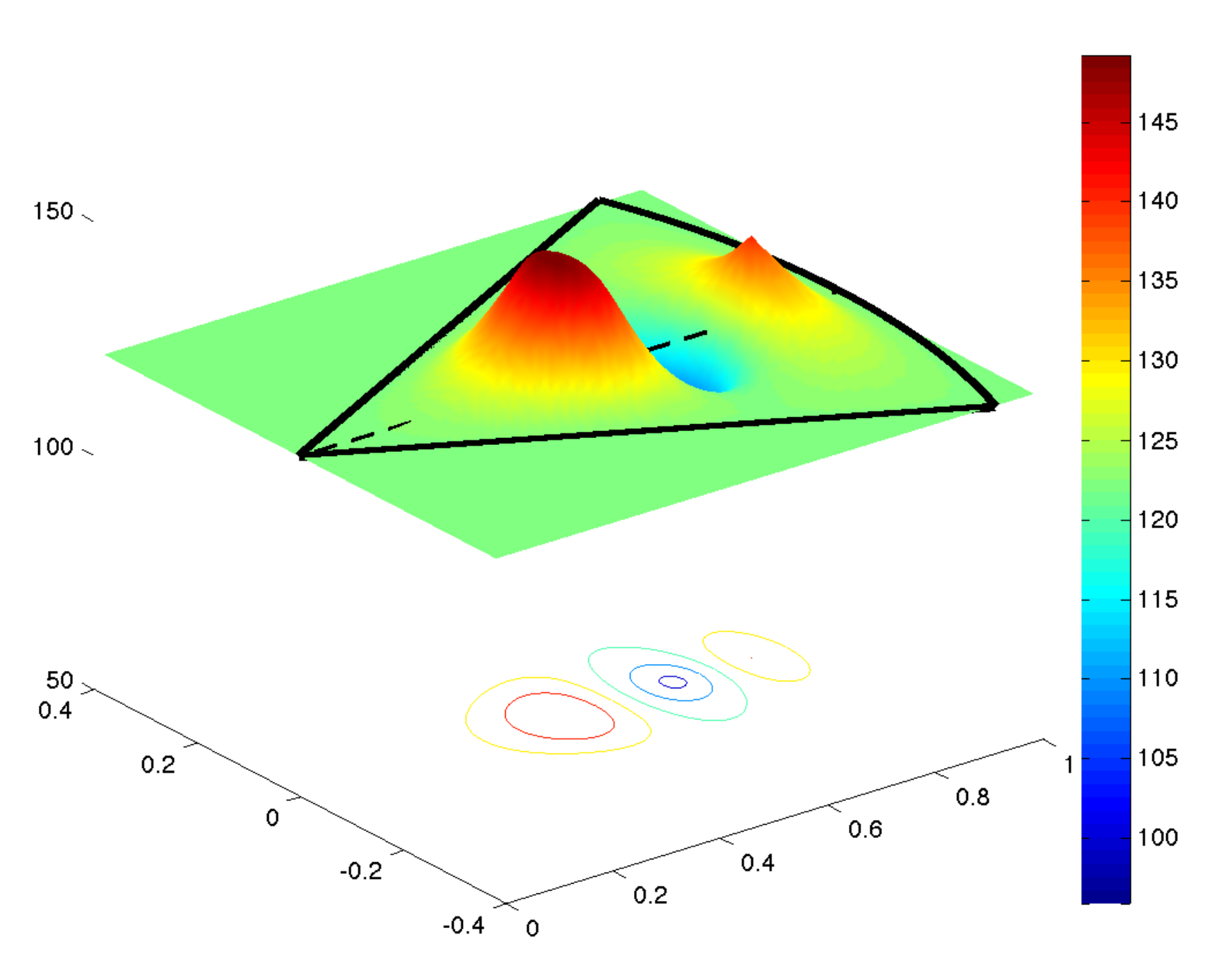}}
\caption{3-D representation of Figures~\ref{fig.AB1-9a} and \ref{fig.AB1-9b}.\label{fig.VPpi4ABbis}}
\end{center}
\end{figure}

\begin{figure}[h!t]
\begin{center}
\includegraphics[height=7cm]{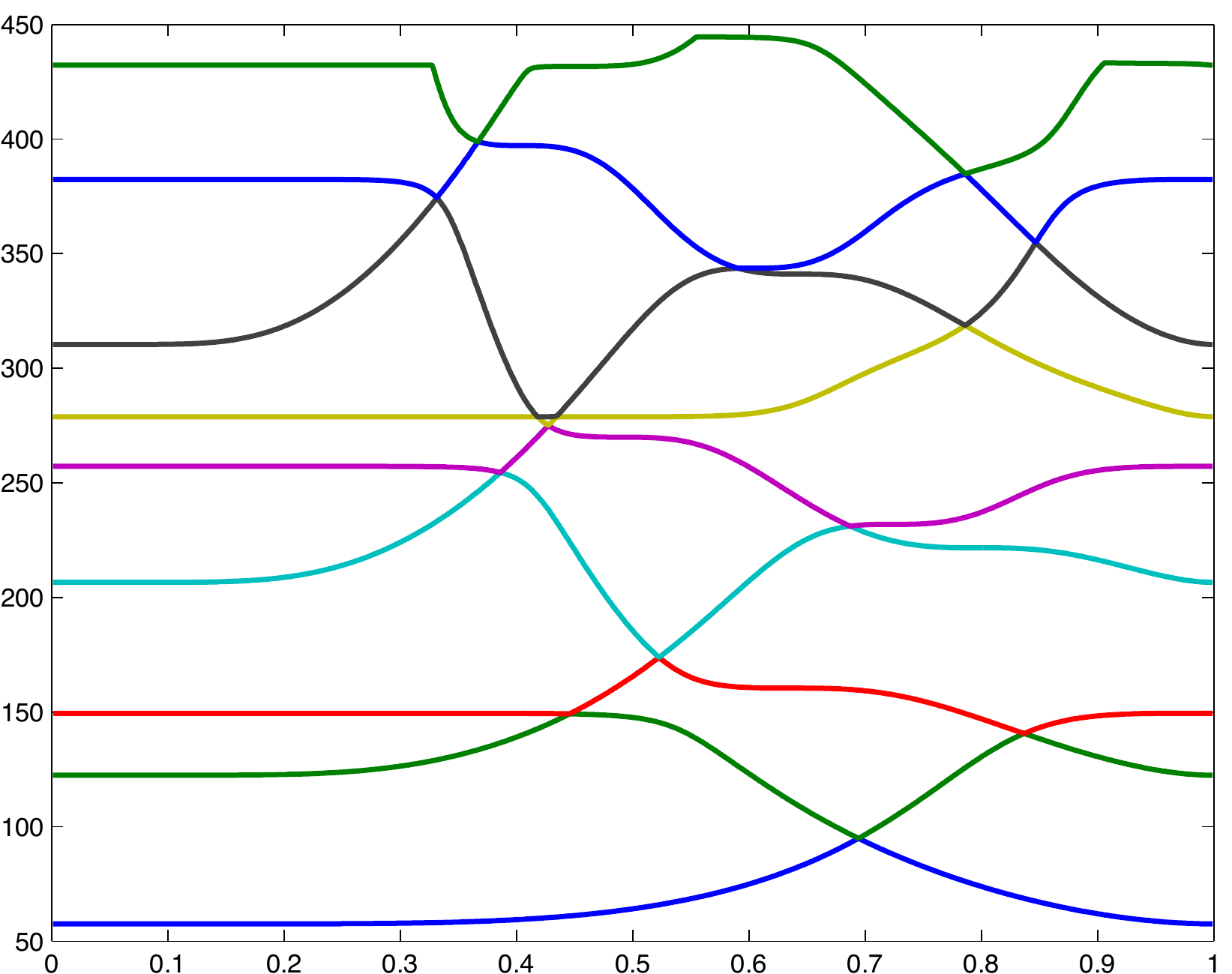}
\caption{$a\mapsto \lambda_{j}^a$, $a\in\left\{(\frac m{1000},0), 0< m< 1000\right\}$, $1\leq j\leq 9$.\label{fig.VPABy0}}
\end{center}
\end{figure}

Going back to Figure~\ref{fig.VPABy0}, we see that the only critical points of $\lambda_j^a$ which correspond to simple eigenvalues are inflexion points. As an example, we have analyzed the inflexion points for $\lambda_{3}^a$, $\lambda_{4}^a$, $\lambda_{5}^a$ when $a=(a_{1},0)$ with $a_{1}\in(0.6,0.7)$, $a_{1}\in(0.75,0.85)$ and $a_{1}\in(0.45,0.55)$ respectively. We will denote these points by $a_{(j)}$, $j=3,4,5$.
In Figure~\ref{fig.vecpAB3-5}, we have plotted the nodal lines of the eigenfunctions $\varphi_j^{a_{(j)}}$ associated with $\lambda_j^{a_{(j)}}$, $j=3,4,5$. We observe that each $\varphi_j^{a_{(j)}}$ has a zero of order $3/2$ at $a_{(j)}$. Correspondingly, the derivative of $\lambda_j^a$ at $a_{(j)}$ vanishes in Figure~\ref{fig.VPABy0}, thus illustrating Theorem \ref{theorem:comportmentofeigenvelue}. In the three examples proposed here, also the second derivative of $\lambda_j^a$ vanishes at $a_{(j)}$.
\begin{figure}[h!t]
\begin{center}
\subfigure[$\lambda_{3}^a$, $a=(0.63,0)\simeq a_{(3)}$]{\includegraphics[height=3cm]{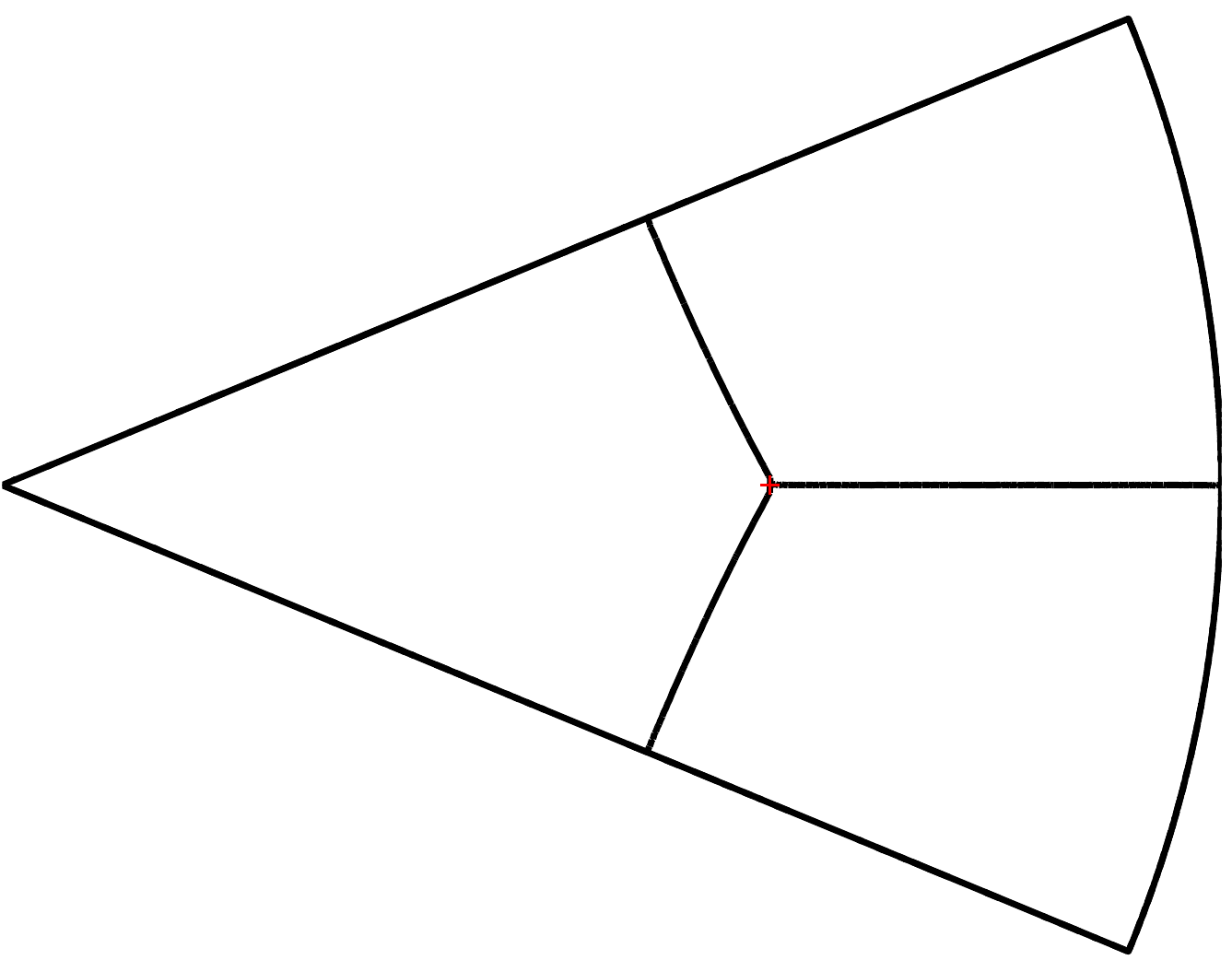}}
\subfigure[$\lambda_{4}^a$, $a=(0.79,0)\simeq a_{(4)}$]{\includegraphics[height=3cm]{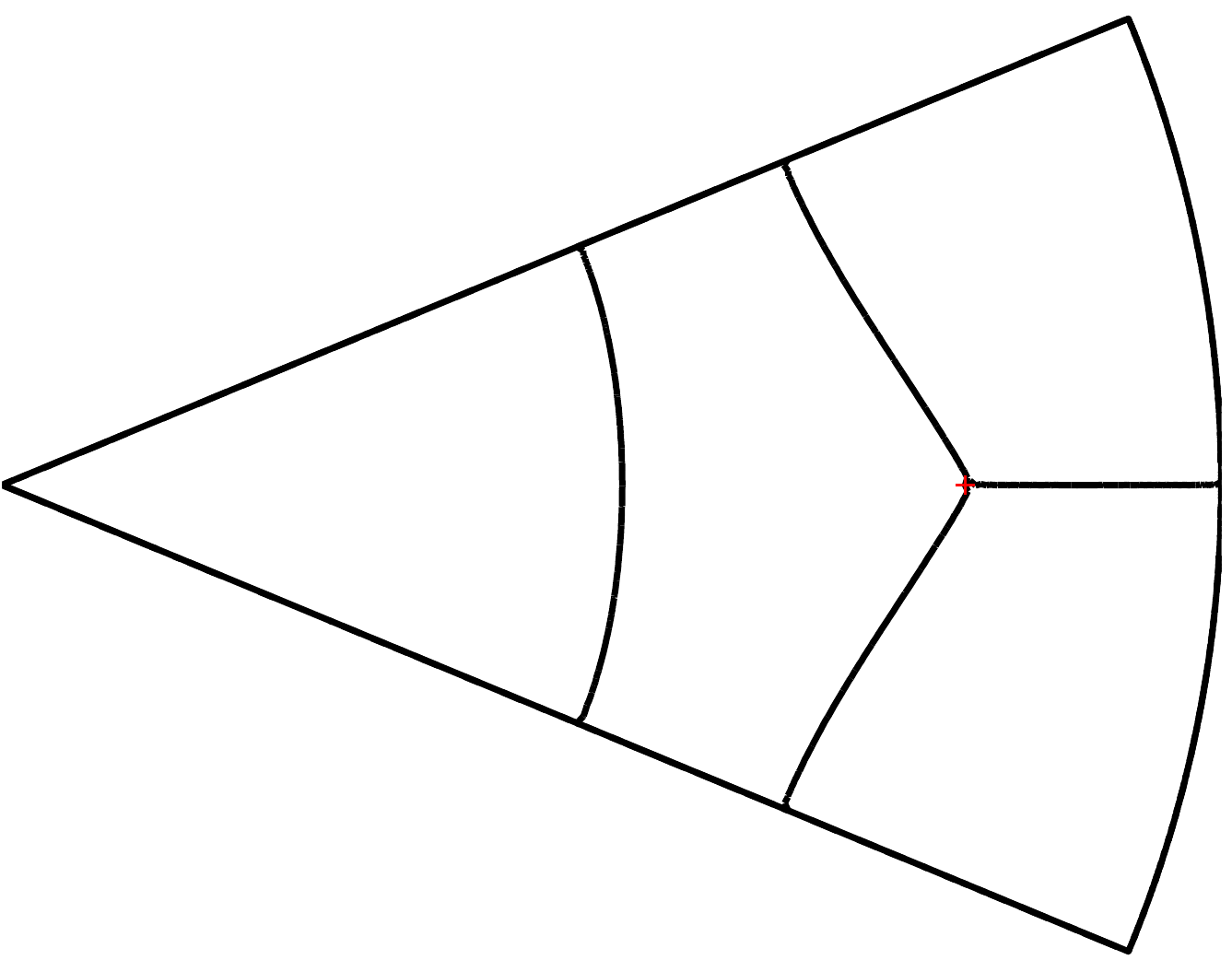}}
\subfigure[$\lambda_{5}^a$, $a=(0.49,0)\simeq a_{(5)}$]{\includegraphics[height=3cm]{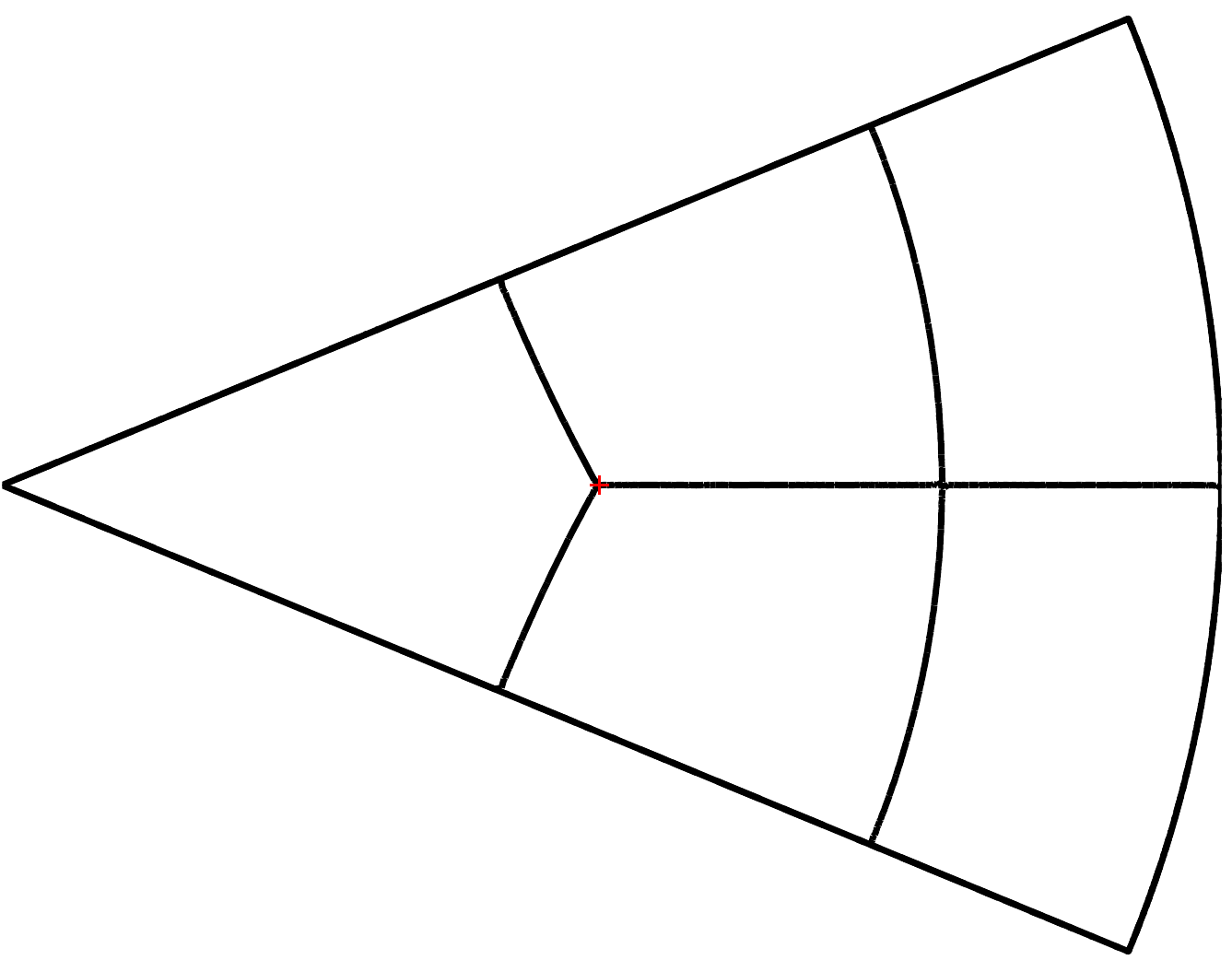}}
\caption{Nodal lines of an eigenfunction associated with $\lambda_{j}^{a_{(j)}}$, $j=3,4,5$.\label{fig.vecpAB3-5}}
\end{center}
\end{figure}

Let us now move a little the singular point around $a_{(j)}$. We use a discretization step of $1/1000$. Figure~\ref{fig.VPAB3-5zoom} represents the behavior of $\lambda_{j}^a$ for $a$ close to $a_{(j)}$. 
It indicates that these points are degenerated saddle points. The behavior of the function $a\mapsto \lambda_{j}^a$, $j=3,4,5$, around $a_{(j)}$ is quite similar to that of the function 
$(t,x)\mapsto t(t^2-x^2)$ around the origin $(0,0)$. 
\begin{figure}[h!t]
\begin{center}
\subfigure[$a\mapsto \lambda_{3}^a$, $a\in\left\{(\frac m{1000},\frac n{1000}), 600\leq m\leq 680, 0\leq n\leq 30\right\}$.\label{fig.VPAB3zoom}]{\includegraphics[width=7cm]{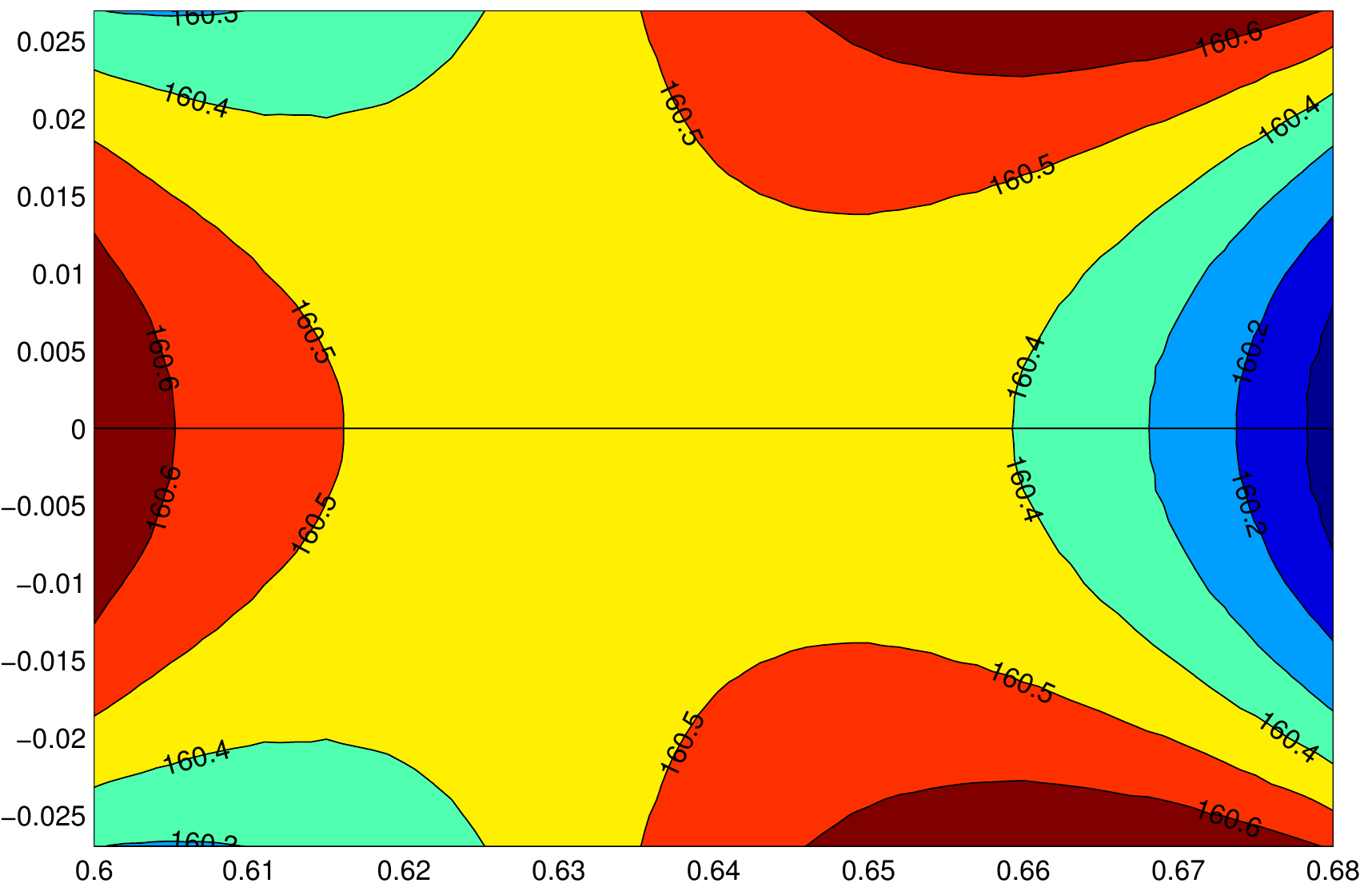}
\includegraphics[width=6.3cm]{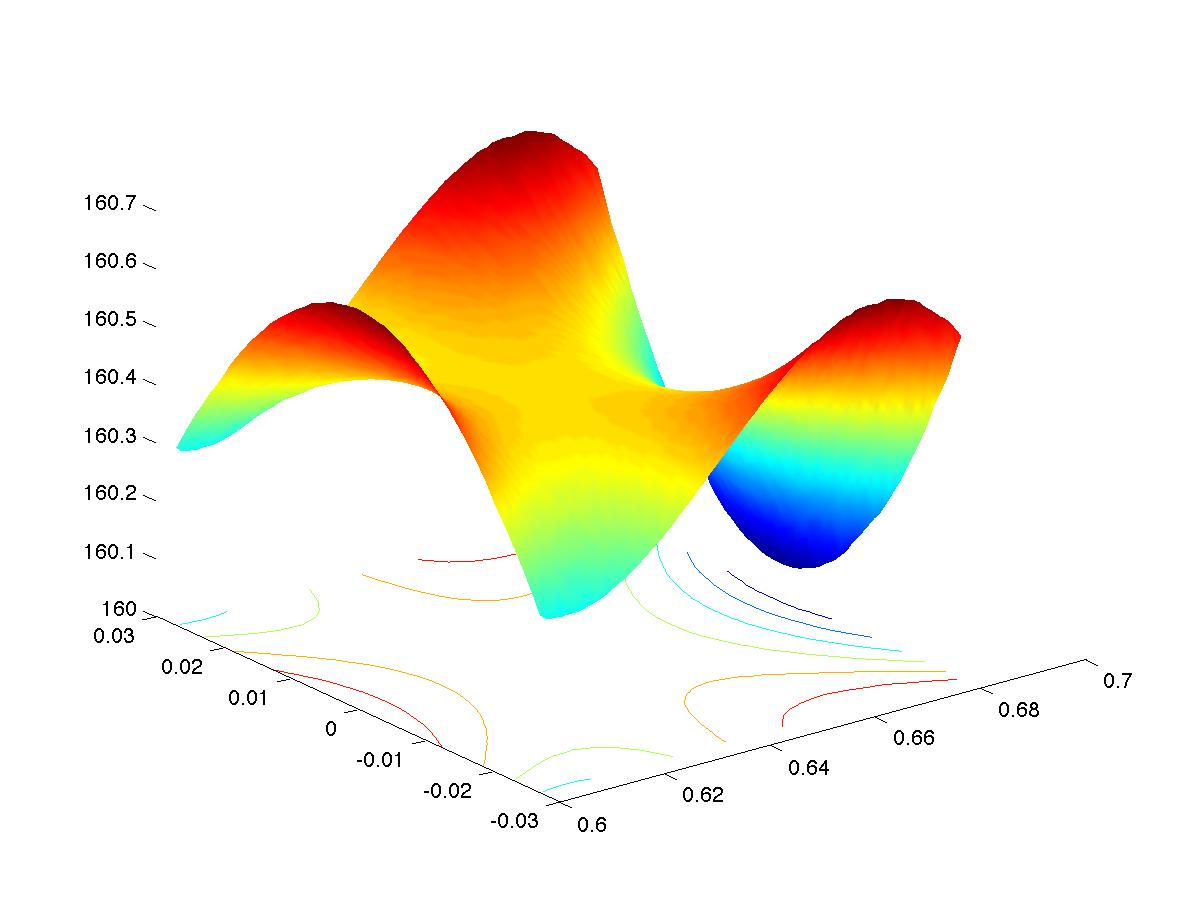}}
\subfigure[$a\mapsto \lambda_{4}^a$, $a\in\left\{(\frac m{1000},\frac n{1000}), 750\leq m\leq 840, 0\leq n\leq 30\right\}$.\label{fig.VPAB4zoom}]{\includegraphics[width=7cm]{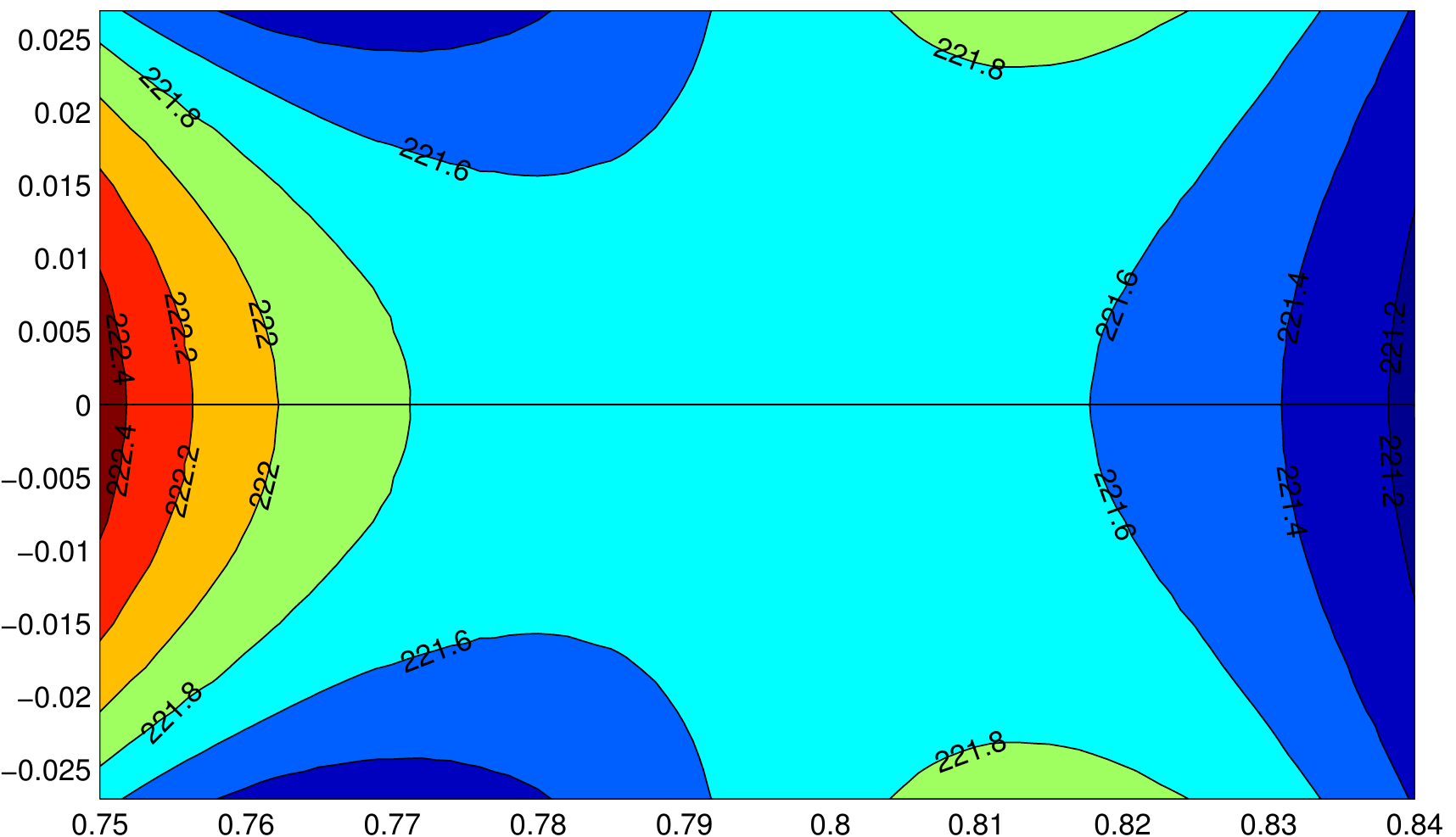}
\includegraphics[width=6.3cm]{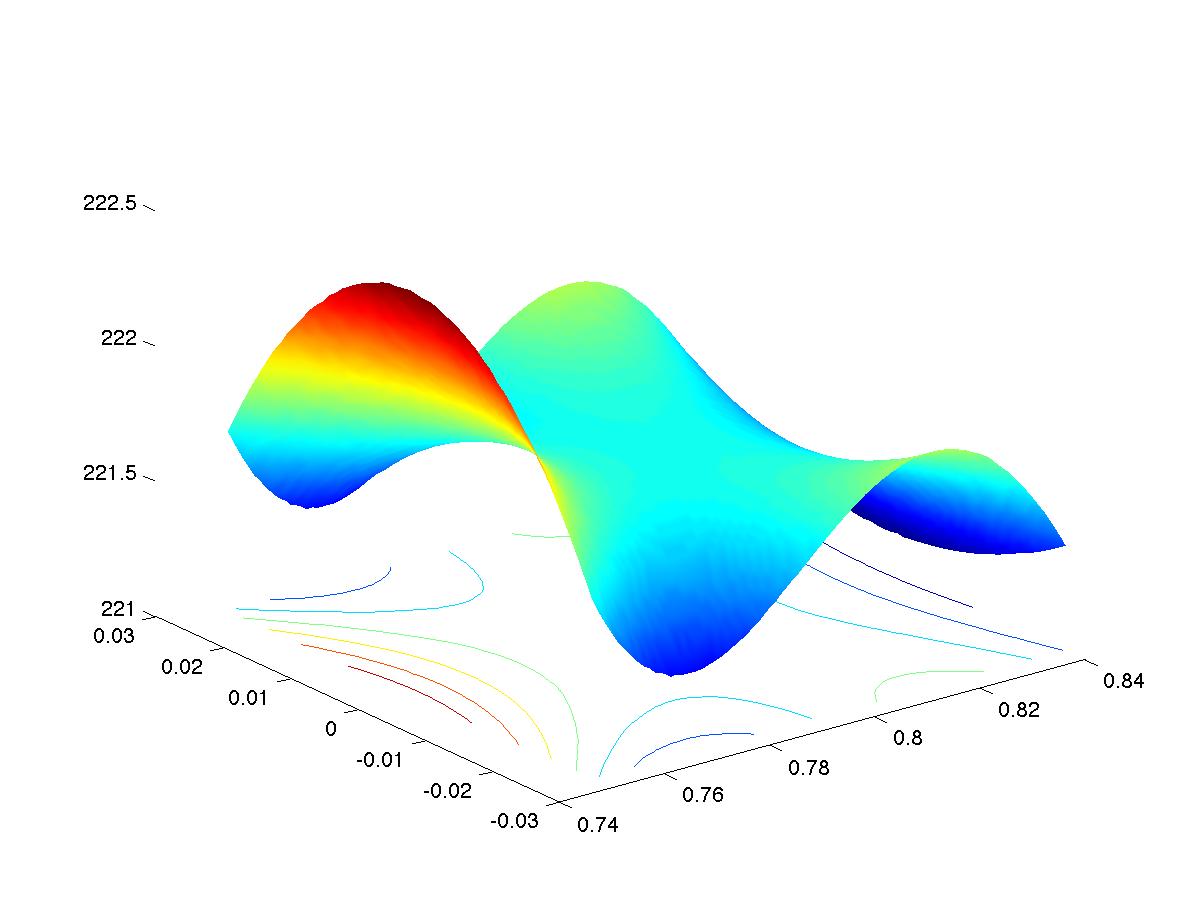}}
\subfigure[$a\mapsto \lambda_{5}^a$, $a\in\left\{(\frac m{1000},\frac n{1000}), 450\leq m\leq 530, 0\leq n\leq 30\right\}$.\label{fig.VPAB5zoom}]{\includegraphics[width=7cm]{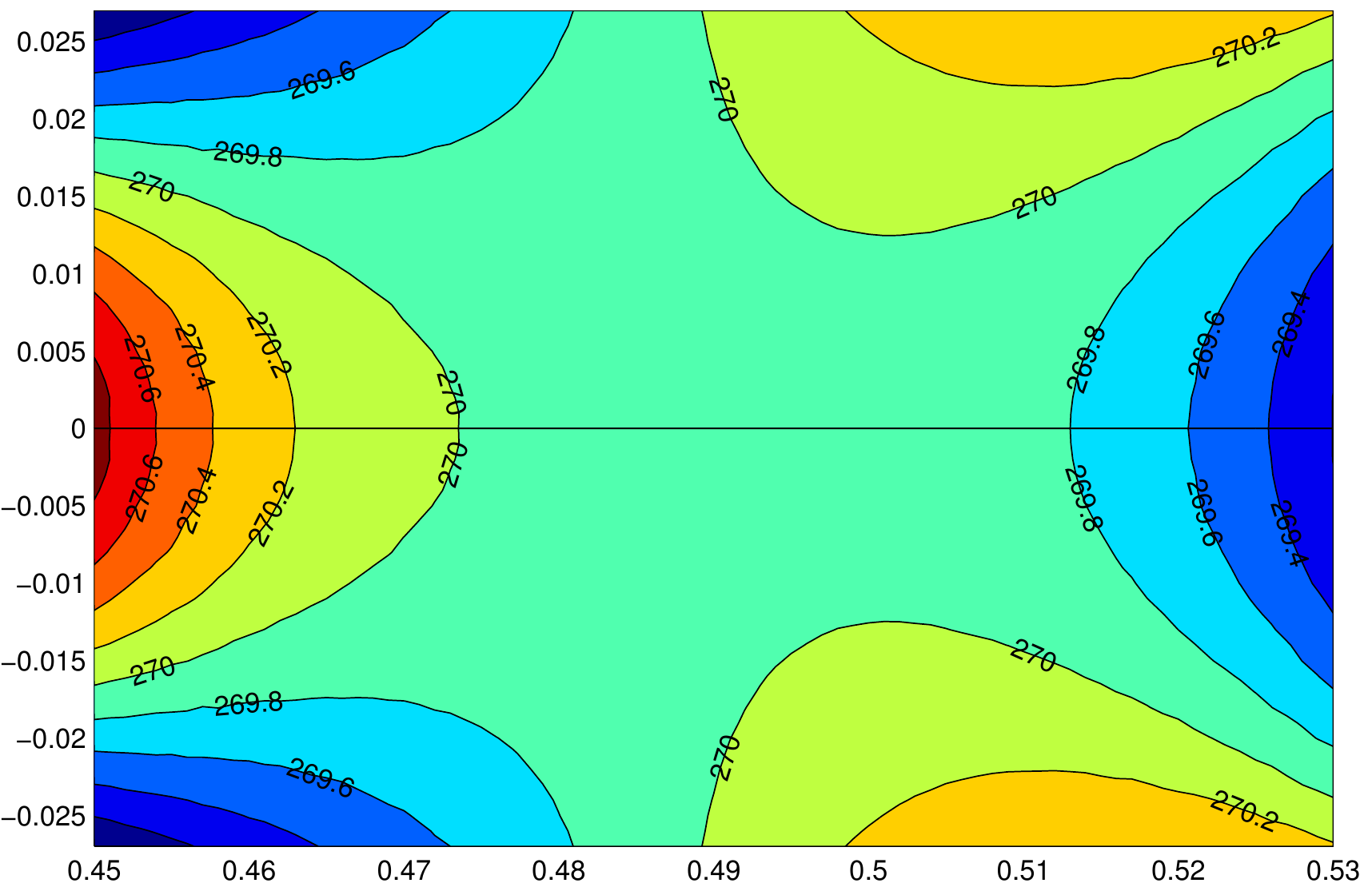}
\includegraphics[width=6.3cm]{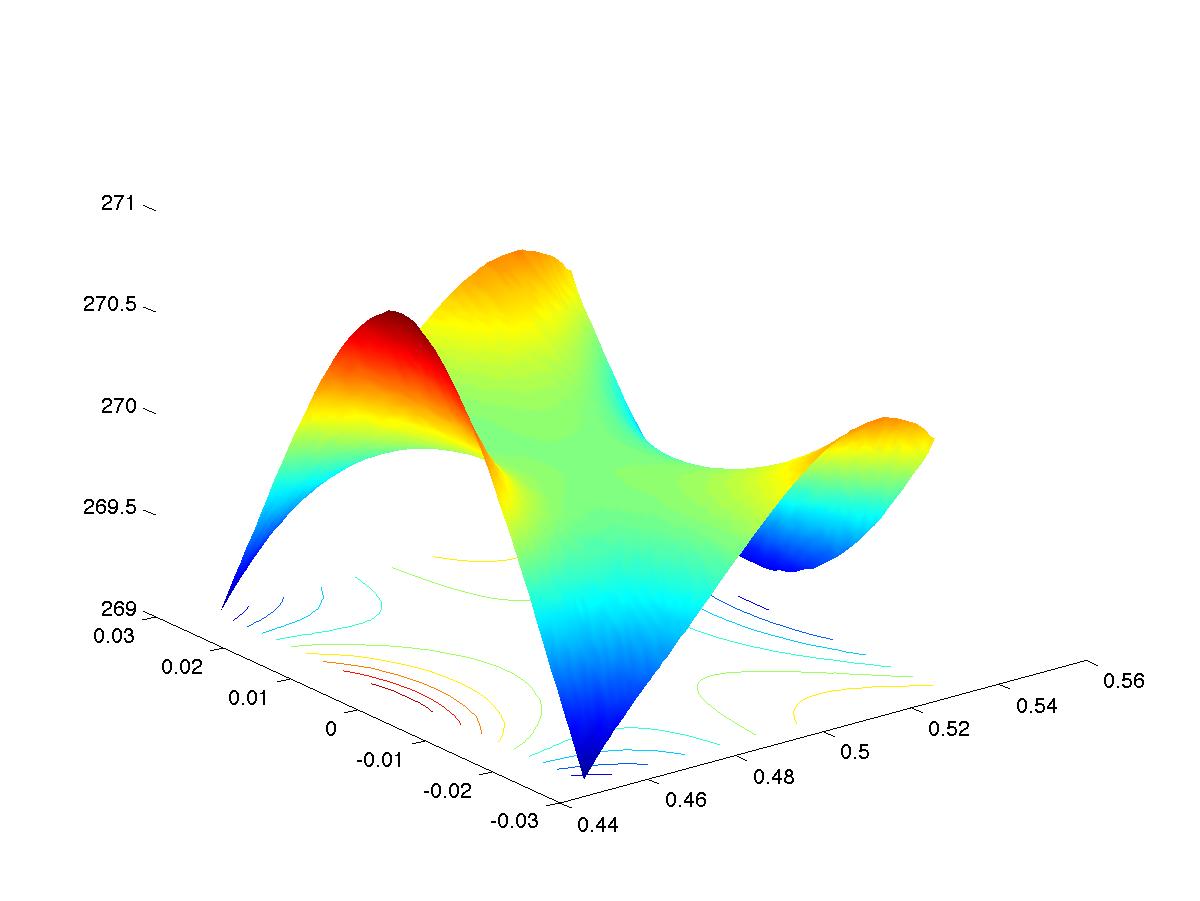}}
\caption{$\lambda_{j}^a$ vs. $a$ for $a$ around the inflexion point $a_{(j)}$, $j=3,4,5$.\label{fig.VPAB3-5zoom}}
\end{center}
\end{figure}

We remark that computing the first twelve eigenvalues of $(i\nabla+A_{a})^2$ on $\Sigma_{\pi/4}$, we have never found an eigenfunction for which five or more nodal lines end at a singular point $a$.

As we have already remarked, all the local maxima and minima of $\lambda_j^a$ in Figure~\ref{fig.VPABy0} correspond to non-simple eigenvalues. Plotting the nodal lines of the corresponding eigenfunctions, we have found that they all have a zero of order $1/2$ at $a$, i.e. one nodal line ending at $a$. Nonetheless, this is not a general fact: in performing the same analysis in the case $\Omega$ is a square $[0,1]\times[0,1]$, we have found that the third and fourth eigenfunctions have a zero of order $3/2$ at the center $a=(\frac 12,\frac 12)$, see Figure~\ref{fig.VecpABsqcentre}, which is in this case a maximum of $a\mapsto\lambda_3^a$ and a minimum of $a\mapsto\lambda_4^a$, see Figures~\ref{fig.ABsq1},~\ref{fig.ABsq2}.
We observe in Figure~\ref{fig.ABsq1} that the first and second derivatives of $\lambda_3^a$ and of $\lambda_4^a$ seem to vanish at the center $a=(\frac 12,\frac 12)$.

\begin{figure}[h!t]
\begin{center}
\subfigure[$a\mapsto \lambda_{j}^a$, $a\in\left\{(\frac n{100},\frac n{100}), 0< n< 200\right\}$, $1\leq j\leq 9$ (square).\label{fig.VPABMed}]{\includegraphics[height=4.8cm]{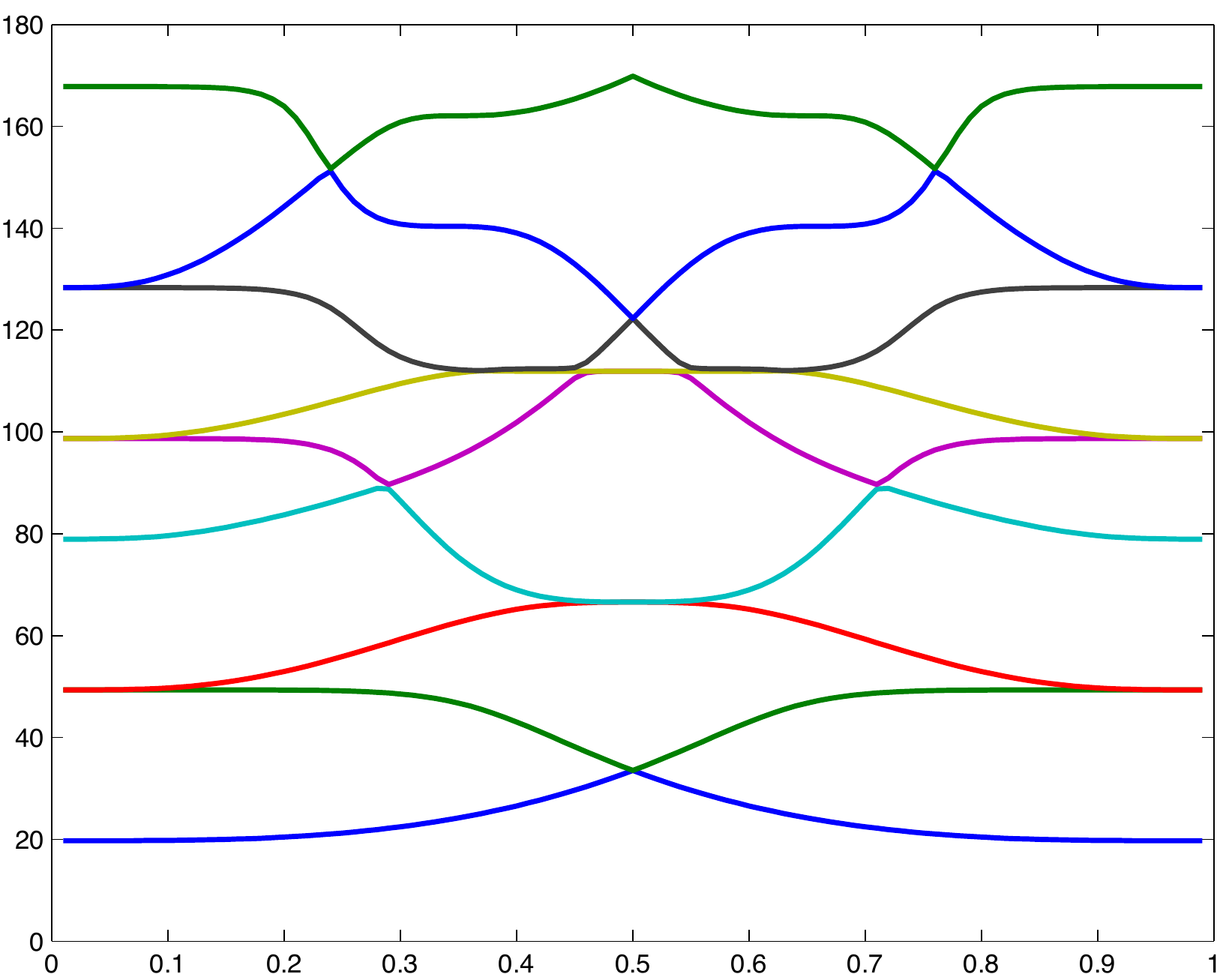}}
\subfigure[$a\mapsto \lambda_{j}^a$, $a\in\left\{(\frac n{100},\frac12), 0< n< 200\right\}$, $1\leq j\leq 9$ (square).\label{fig.VPABDiag}]{\includegraphics[height=4.8cm]{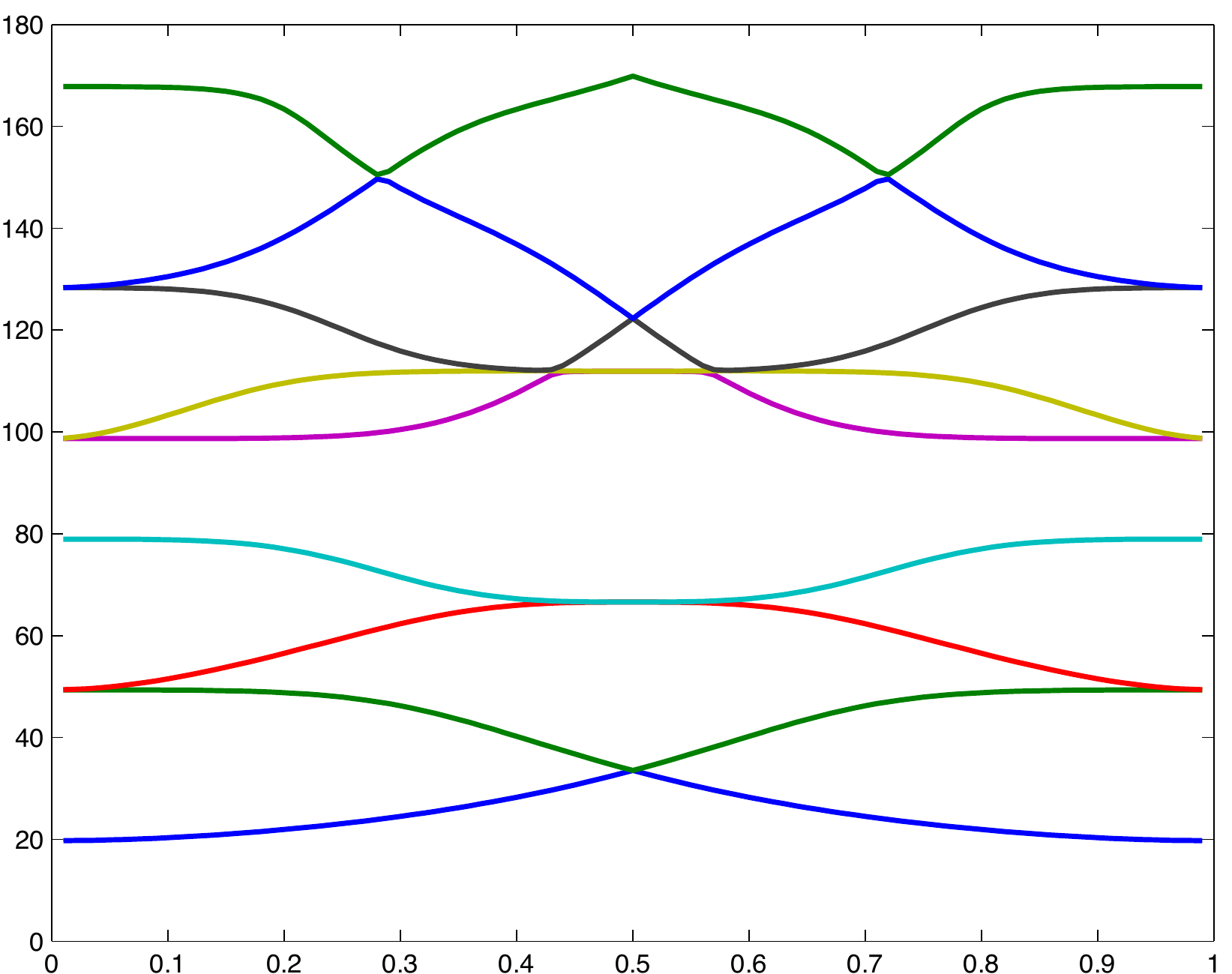}}
\caption{$a\mapsto \lambda_{j}^a$ for $a$ along the perpendicular bisector or the diagonal of a square\label{fig.ABsq1}}
\end{center}
\end{figure}


\begin{figure}[h!]
\begin{center}
\subfigure[$\lambda_{3}^a$ vs. $a$]{\includegraphics[height=3.3cm]{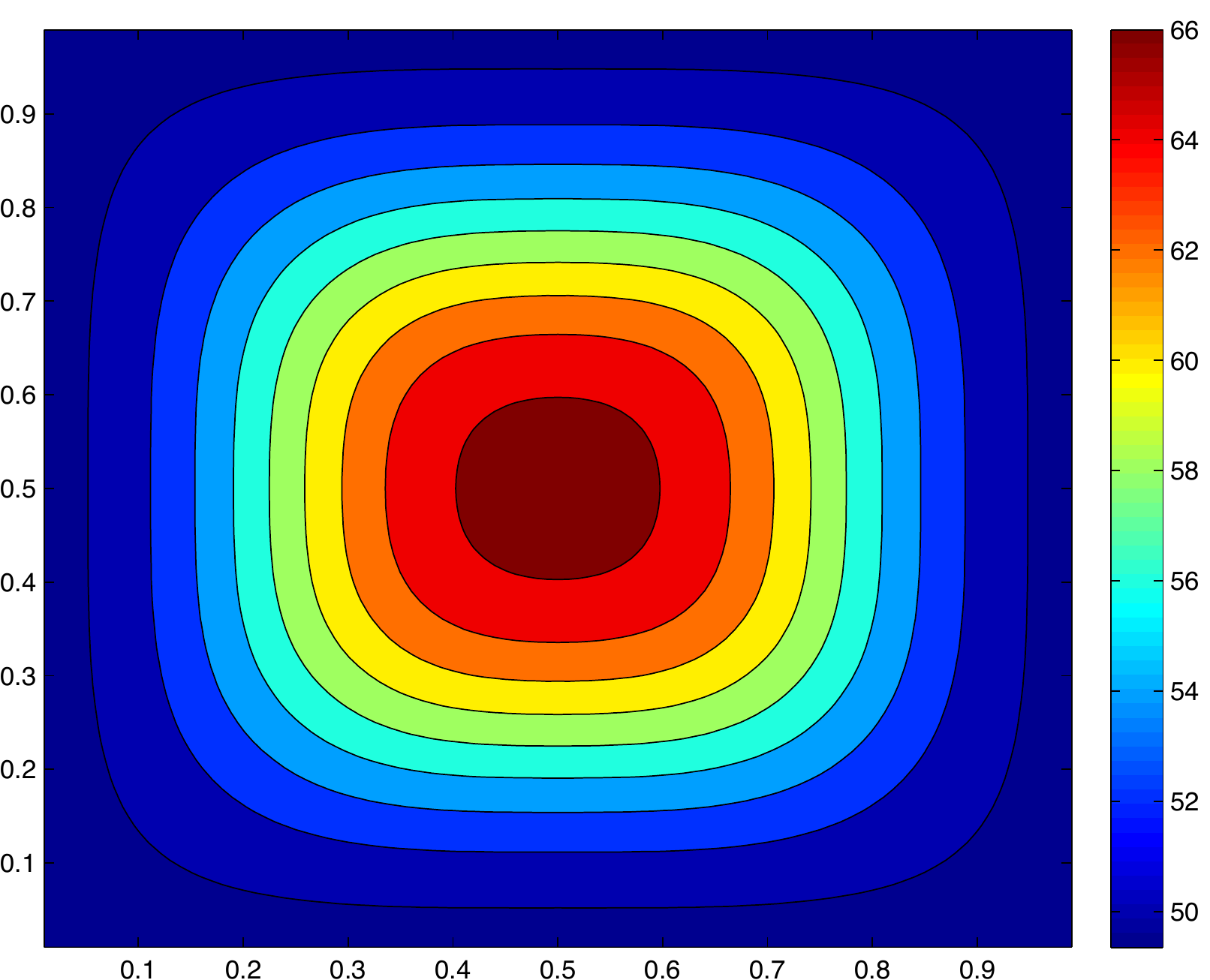}}
\subfigure[$\lambda_{4}^a$ vs. $a$]{\includegraphics[height=3.3cm]{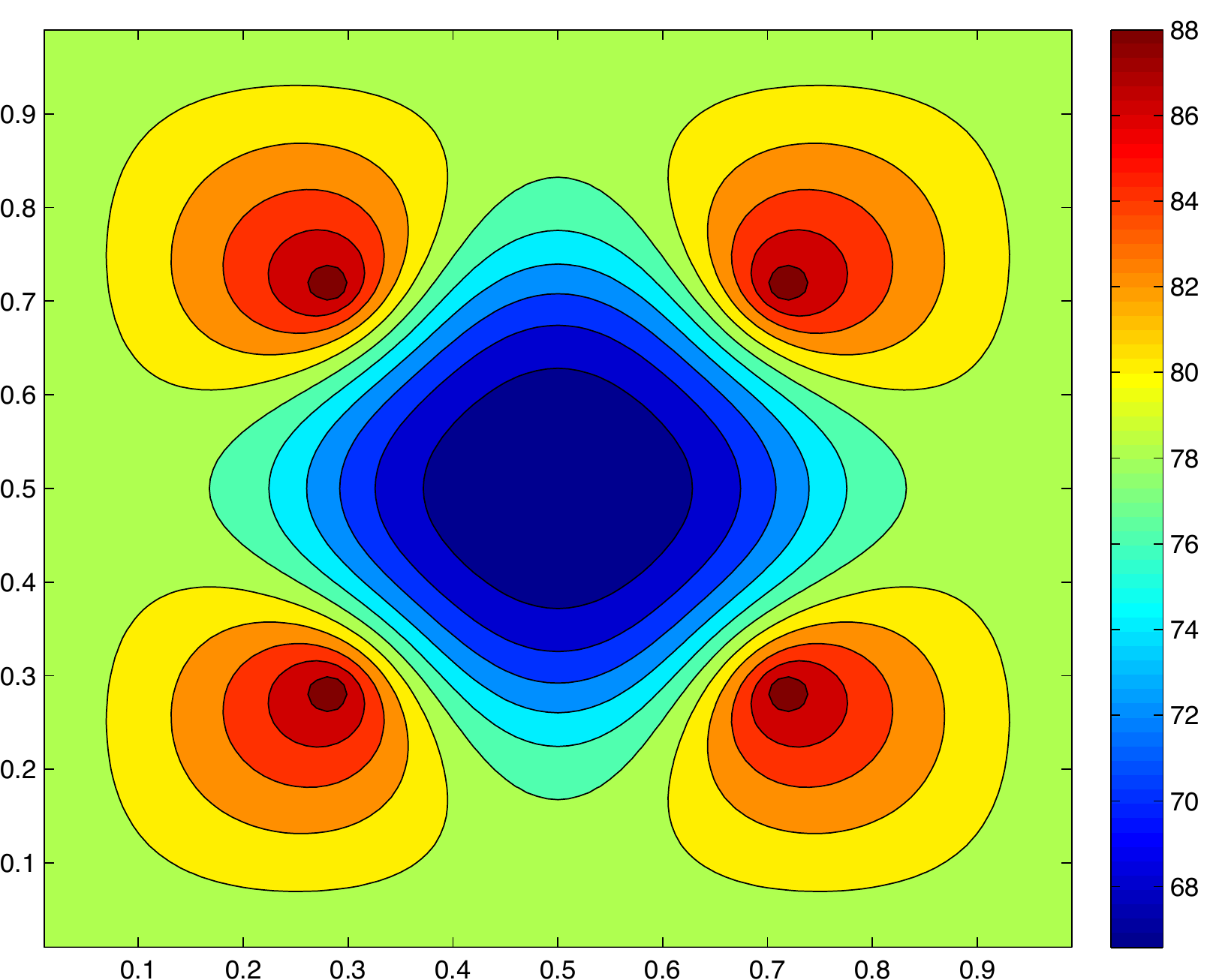}}
\subfigure[$\lambda_{5}^a$ vs. $a$]{\includegraphics[height=3.3cm]{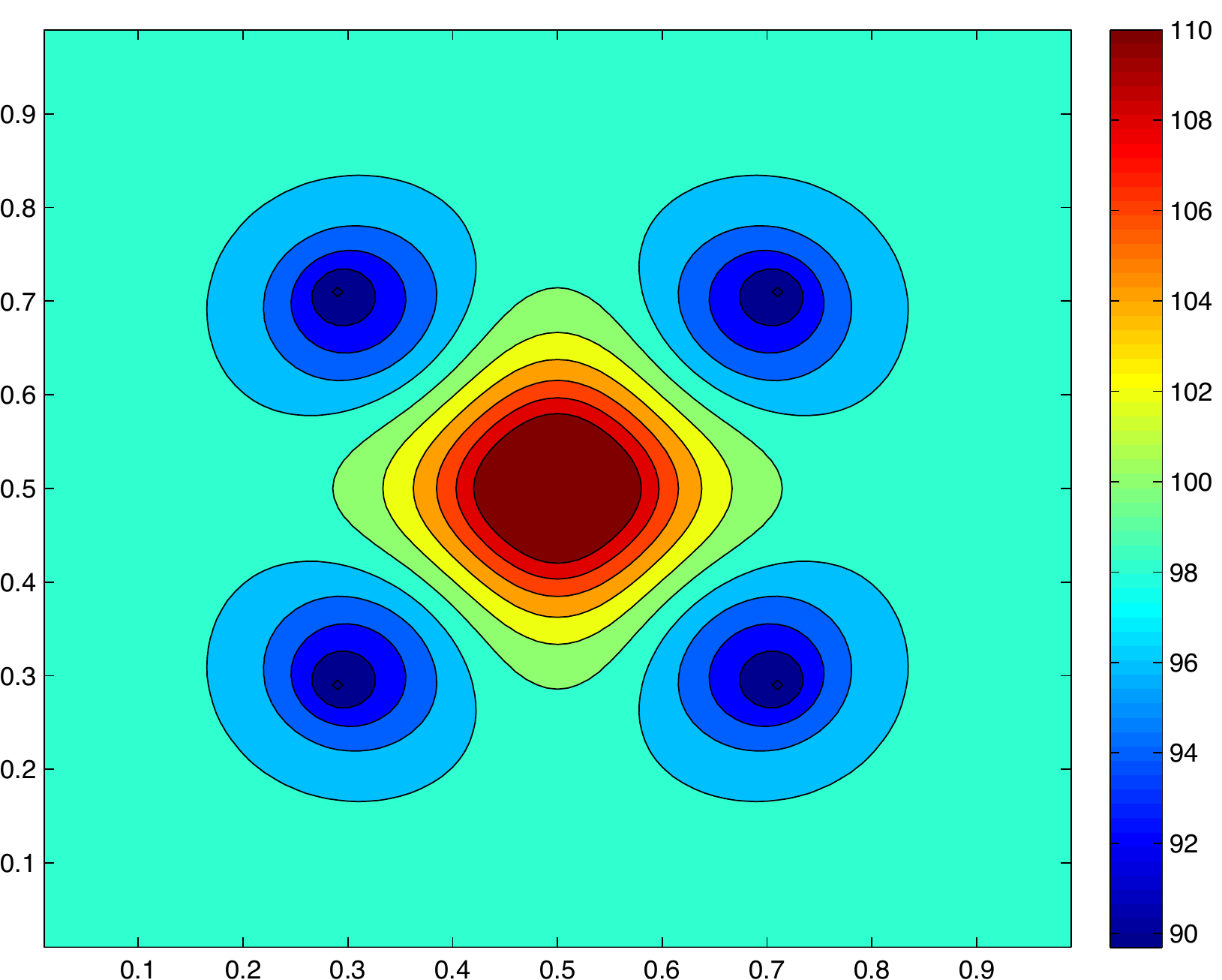}}
\caption{Eigenvalues of $(i\nabla+A_{a})^2$ in $[0,1]\times[0,1]$, $a\in \Pi_{50}$.\label{fig.ABsq2}}
\end{center}
\end{figure}


\begin{figure}[h!]
\begin{center}
\subfigure[$\lambda_{3}^a$, $a=(\frac{1}{2},\frac{1}{2})$]{\includegraphics[height=3cm]{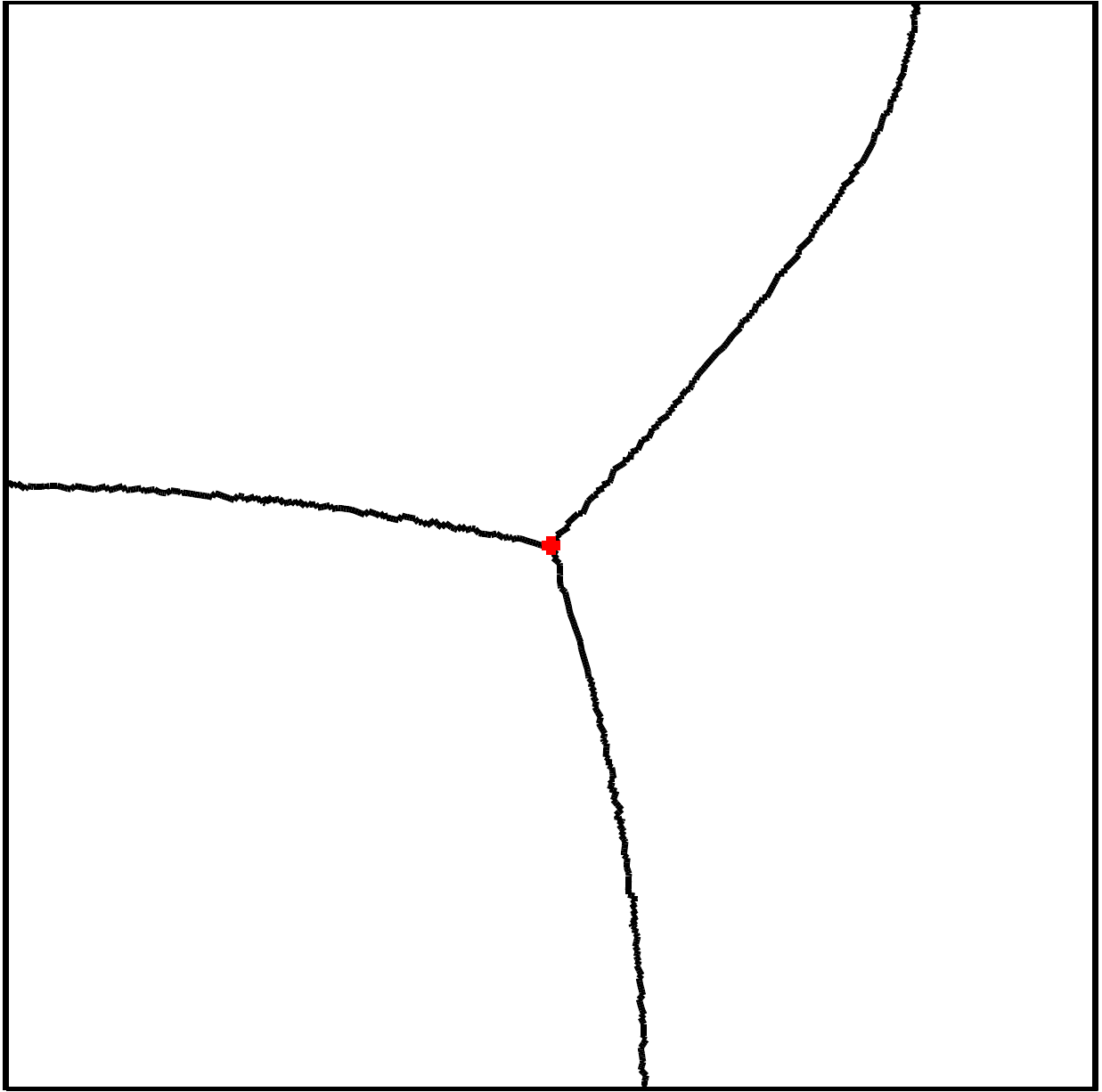}}$\qquad$
\subfigure[$\lambda_{4}^a$, $a=(\frac{1}{2},\frac{1}{2})$]{\includegraphics[height=3cm]{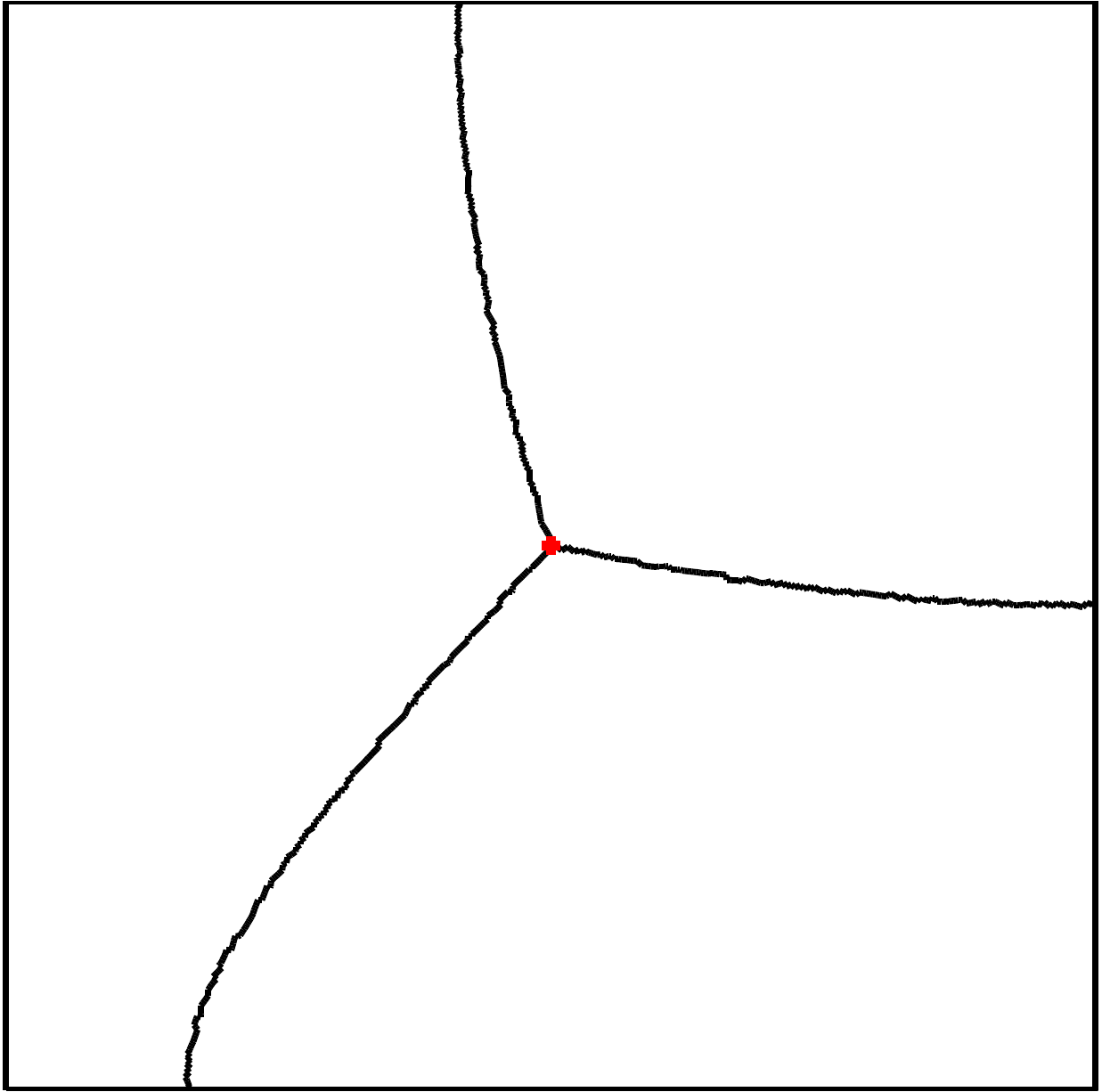}}
\caption{Nodal lines of an eigenfunction associated with $\lambda_{j}^a$, $j=3,4$, $a=(\frac 12,\frac 12)$.\label{fig.VecpABsqcentre}}
\end{center}
\end{figure}

\newpage \ 


\small

\noindent\verb"bonnaillie@math.cnrs.fr"\\
\noindent IRMAR, ENS Rennes, Univ. Rennes 1, CNRS, UEB, av. Robert Schuman, 35170 Bruz (France)\\

\noindent\verb"benedettanoris@gmail.com"\\
INdAM-COFUND Marie Curie Fellow \\
\noindent Laboratoire de Math\'ematiques, Universit\'e de Versailles-St Quentin, 45 avenue des \'Etats-Unis, 78035 Versailles cedex (France) \\

\noindent\verb"manonys@gmail.com"\\
Fonds national de la Recherche scientifique-FNRS \\
\noindent D\'epartement de Math\'ematiques, Universit\'e Libre de Bruxelles (ULB), Boulevard du triomphe, B-1050 Bruxelles (Belgium) \\
\noindent Dipartimento di Matematica e Applicazioni, Universit\`a degli Studi
di Milano-Bicocca, via Bicocca degli Arcimboldi 8, 20126 Milano (Italy)\\

\noindent\verb"susanna.terracini@unito.it"\\
Dipartimento di Matematica ``Giuseppe Peano'', Universit\`a di Torino, Via Carlo Alberto 10, 20123 Torino (Italy)

\end{document}